\newtheorem{theorem}{Theorem}
\theoremstyle{plain}
\newtheorem{corollary}[theorem]{Corollary}
\newtheorem{definition}[theorem]{Definition}
\newtheorem{example}[theorem]{Example}
\newtheorem{lemma}[theorem]{Lemma}
\newtheorem{proposition}[theorem]{Proposition}
\newtheorem{remark}[theorem]{Remark}
\numberwithin{equation}{section}
\numberwithin{theorem}{section}
\newcommand{\M}{{\mathbb M}}
\newcommand{\N}{{\mathbb N}}
\newcommand{\R}{{\mathbb {R}}}
\newcommand{\F}{{\mathcal F}}
\renewcommand{\leq}{\leqslant}
\renewcommand{\geq}{\geqslant}
\newcommand{\OO}{{\mathcal O}}
\newcommand{\DM}{{\mathcal D}}
\newcommand{\et}{{\tilde\eta}}
\newcommand{\lxm}{{\lambda\xi\mu}}
\newcommand{\lIxm}{{\lambda_1\xi\mu}}
\newcommand{\lIIxm}{{\lambda_2\xi\mu}}
\newcommand{\id}{\text{id}}
\DeclareMathOperator{\supp}{supp}
\DeclareMathOperator{\tr}{tr}
\newcommand{\at}{\makeatletter @\makeatother}
\begin{document}

\title[Stochastic Functional Differential Equations and Sensitivity to their Initial Path]{Stochastic Functional Differential Equations \\and Sensitivity to their Initial Path}

\date{January 22, 2017 }

\author[Ba\~nos]{D. R. Ba\~nos}
\address{D. R. Ba\~nos: Department of Mathematics, University of Oslo,
PO Box 1053 Blindern, N-0316 Oslo, Norway.Email: davidru\at math.uio.no}

\author[Di Nunno]{G. Di Nunno}
\address{G. Di Nunno: Department of Mathematics, University of Oslo,
PO Box 1053 Blindern, N-0316 Oslo, Norway, and, Norwegian School of Economics and Business Administration, Helleveien 30, N-5045 Bergen, Norway. Email: giulian\at math.uio.no}

\author[Haferkorn]{H. H. Haferkorn}
\address{H. H. Haferkorn: Department of Mathematics, University of Oslo,
PO Box 1053 Blindern, N-0316 Oslo, Norway.Email: hanneshh\at math.uio.no}

\author[Proske]{F. Proske}
\address{F. Proske: Department of Mathematics, University of Oslo,
PO Box 1053 Blindern, N-0316 Oslo, Norway. Email: proske\at math.uio.no}


\maketitle

\begin{abstract}
We consider systems with memory represented by stochastic functional differential equations. Substantially, these are stochastic differential equations with coefficients depending on the past history of the process itself. Such coefficients are hence defined on a functional space. 
Models with memory appear in many applications ranging from biology to finance.
Here we consider the results of some evaluations based on these models (e.g. the prices of some financial products) and the risks connected to the choice of these models. In particular we focus on the impact of the initial condition on the evaluations. 
This problem is known as the analysis of sensitivity to the initial condition and, in the terminology of finance, it is referred to as the Delta. 
In this work the initial condition is represented by the relevant past history of the stochastic functional differential equation. This naturally leads to the redesign of the definition of Delta. We suggest to define it as a functional directional derivative, this is a natural choice. For this we study a representation formula which allows for its computation without requiring that the evaluation functional is differentiable. This feature is particularly relevant for applications.
Our formula is achieved by studying an appropriate relationship between Malliavin derivative and functional directional derivative. 
For this we introduce the technique of {\it randomisation of the initial condition}.
\end{abstract}


\vspace{5mm}
\section{Introduction}

Several phenomena in nature show evidence of both a stochastic behaviour and a dependence on the past history when evaluating the present state. Examples of models taking into account both features come from biology in the different areas of population dynamics, see e.g. \cite{Mao1, Mao2}, or gene expression, see e.g.  \cite{MPBF11}, or epidemiology, see e.g. \cite{BG13}. We find several stochastic models dealing with delay and memory also in the different areas of economics and finance. 
The delayed response in the prices of both commodities and financial assets is studied for example in \cite{AI05, AIK05, SMohammed, SMohammedetal, Chang, Chang2, KSW05, KSW07, Kuchler, Stoica, S13}. The very market inefficiency and also the fact that traders persistently use past prices as a guide to decision making induces memory effects that may be held responsible for market bubbles and crashes. See e.g. \cite{ARS08, HR98}.

In this work we consider a general stochastic dynamic model incorporating delay or memory effects. Indeed we consider stochastic functional differential equations (SFDE), which are substantially stochastic differential equations with coefficients depending on the past history of the dynamic itself.
These SFDEs have already been studied in the pioneering works of \cite{SMohammed2, SMohammed3, Yan} in the Brownian framework. The theory has later been developed including models for jumps in \cite{BCDDR16}.
From another perspective models with memory have been studied via the so-called functional It\^o calculus as introduced in \cite{D09} and then developed steadily in e.g. \cite{CF10, CF13}. For a comparison of the two approaches we refer to e.g. \cite{CR16, FZ16}. In the deterministic framework functional differential equations are widely studied. See, e.g. \cite{HVL93}.

By model risk we generically mean all risks entailed in the choice of a model in view of prediction or forecast. One aspect of model risk management is the study of the sensitivity of a model to the estimates of its parameters. In this paper we are interested in the sensitivity to the initial condition. In the terminology of mathematical finance this is referred to as the Delta. However, in the present setting of SFDEs, the very concept of Delta has to be defined as new, being the initial condition an initial path and not only a single initial point as in the standard stochastic differential equations. It is the first time that the sensitivity to the initial path is tackled, though it appears naturally whenever working in presence of memory effects.

As illustration, let us consider the SFDE:
\begin{align*}
\begin{cases}
dx(t)= f(t,x(t),x_t)dt + g(t,x(t),x_t)dW(t), \ \ t\in [0,T] \\
(x(0),x_0) = \eta
\end{cases}
\end{align*}
where by $x(t)$ we mean the evaluation at time $t$ of the solution process and by $x_t$ we mean the segment of past that is relevant for the evaluation at $t$. 
Let us also consider the evaluation $p(\eta)$ at $t=0$ of some value $\Phi(^\eta x(T), ^\eta x_T)$ at $t=T$ of a functional $\Phi$ of the model. Such evaluation is represented as the expectation:
\begin{align}\label{generalPricingFormula}
 p(\eta) = E \left[ \Phi(^\eta x(T), ^\eta x_T) \right].
\end{align}

\noindent We have marked explicitly the dependence on the initial path $\eta$ by an anticipated superindex.

Evaluations of this type are typical in the pricing of financial derivatives, which are financial contracts with payoff $\Psi$ written on an underlying asset with price dynamics $S$ given by an SFDE of the type above. Indeed in this case the classical non arbitrage pricing rule provides a fair price in the form
$$
 p_{risk-neutral}(\eta) = E_{^\eta Q} \left[ \frac{\Psi(^\eta S(T), ^\eta S_T)}{N(T)} \right] = E \left[ ^\eta Z(T) \frac{\Psi(^\eta S(T), ^\eta S_T)}{N(T)} \right], 
$$
where $^\eta Z(T) = \frac{d {}^\eta Q}{dP}$ is the Radon-Nykodim derivative of the risk-neutral probability measure $^\eta Q$ and $N(T)$ is a chosen num\'eraire used for discounting. We observe that such pricing measure $^\eta Q$ depends on $\eta$ by construction.

Analogously, in the so-called benchmark approach to pricing (see e.g. \cite{Platen}), a non-arbitrage fair price is given in the form
$$
 p_{benchmark}(\eta) = E \left[ \frac{\Psi(^\eta S(T), ^\eta S_T)}{^\eta G(T)} \right], 
$$
where $^\eta G(T)$ is the value of an appropriate benchmark process, used in discounting and guaranteeing that the very $P$ is an appropriate pricing measure.
Here we note that the benchmark depends on the initial path $\eta$ of the underlying price dynamics. Both pricing approaches can be represented as \eqref{generalPricingFormula} and from now on we shall generically call \emph{payoff} the functional $\Phi$, borrowing the terminology from finance. 

Then, in the present notations, the study of the sensitivity to the initial condition consists in the study of some derivative of $p(\eta)$:
$$
 \frac{\partial}{\partial \eta} p(\eta) = \frac{\partial}{\partial \eta}   E \left[ \Phi(^\eta x(T), ^\eta x_T) \right] .
$$
and its possible representations.

In this work we interpret the derivative above as a functional directional derivative and we study formulae for its representations. 
Our approach takes inspiration from the seminal papers \cite{Fournieetal, Fournieetal2}. Here Malliavin calculus is used to obtain a nice formula, where the derivative is itself represented as an expectation of the product of the functional $\Phi$ and some random variable, called Malliavin weight.

We remark immediately that the presence of memory has effects well beyond the expected and the formulae we obtain will not be, unfortunately, so elegant. The representation formulae we finally obtain do not formally present or require the Fr\'echet differentiability of $\Phi$. This is particularly relevant for applications e.g. to pricing. To obtain our formulae we shall study the relationship between functional Fr\'echet derviatives and Malliavin derivatives. However, this relationship has to be carefully constructed. Our technique is based on what we call \emph{the randomisation of the initial path condition}, which is based on the use of an independent Brownian noise to ''shake'' the past. 

The paper is organised as follows. In Section 2 we provide a detailed background of SFDEs. The first part of Section 3 is dedicated to the study of the sensitivity to the initial path condition and the technique of randomisation. We obtain a general representation formula for the sensitivity. Here we see that there is a balance between the generality of the functional $\Phi$ allowed and the regularity on the coefficients of the dynamics of the underlying. The second part of Section 3 presents further 
detailed results in the case of a suitable randomisation choice.
The Appendix contains some technical proof, given with the aim of a self-contained reading.

\vspace{5mm}
\section{Stochastic functional differential equations}\label{section2}

In this section we present a general setup for stochastic functional differential equations (SFDEs). Our framework is inspired by and generalises \cite{SMohammed, SMohammedetal} and \cite{Kuchler}.

\vspace{5mm}
\subsection{The model}

On the complete probability space $(\Omega, \mathcal{F},(\mathcal{F}_t)_{t\in [0,T]},P)$ where the filtration satisfies the usual assumptions and is such that $\mathcal{F}=\mathcal{F}_T$, we consider $W=\{W(t,\omega); \ \omega\in \Omega, t\in[0,T]\}$ an $m$-dimensional standard $(\mathcal{F}_t)_{t\in [0,T]}$-Brownian motion. Here $T\in[0,\infty)$.

We are interested in stochastic processes $x:[-r,T]\times \Omega \rightarrow \R^d$, $r\geq 0$, with finite second order moments and a.s. continuous sample paths. So, one can look at $x$ as a random variable $x:\Omega \rightarrow \mathcal{C}([-r,T],\R^d)$ in $L^2(\Omega, \mathcal{C}([-r,T],\R^d))$. In fact, we can look at $x$ as

$$x:\Omega \rightarrow \mathcal{C}([-r,T],\R^d) \hookrightarrow L^2([-r,T],\R^d) \hookrightarrow \R^d \times L^2([-r,T],\R^d)$$
where the notation $\hookrightarrow$ stands for \emph{continuously embedded in}, which holds since the domains are compact.

From now on, for any $u \in [0,T]$, we write $M_2([-r,u],\R^d):= \R^d \times L^2([-r,u],\R^d)$ for the so-called Delfour-Mitter space endowed with the norm

\begin{align}\label{norm}
\|(v,\theta)\|_{M_2} = \left(|v|^2+ \|\theta\|_2^2\right)^{1/2},\quad(v,\theta) \in M_2([-r,u],\R^d),
\end{align}

\noindent where $\|\cdot\|_2$ stands for the $L^2$-norm and $|\cdot|$ for the Euclidean norm in $\R^d$. For short we denote $M_2 := M_2([-r,0],\R^d)$.

The interest of using such space comes from two facts. On the one hand, the space $M_2$ endowed with the norm \eqref{norm} has a Hilbert structure which allows for a Fourier representation of its elements. On the other hand, as we will see later on, the point 0 plays an important role and therefore we need to distinguish between two processes in $L^2([-r,0], \R^d)$ that have different images at the point 0. In general the spaces $M_2([-r,u], \R^d) $ are also natural to use since they coincide with the corresponding spaces of continuous functions $\mathcal{C}([-r,u],\R^d)$ completed with respect to the norm (\ref{norm}), by taking the natural injection $i(\varphi(\cdot)) = (\varphi(u), \varphi(\cdot)1_{[-r,u)})$ for a $\varphi\in \mathcal{C}([-r,u],\R^d)$ and by closing it.

Furthermore, by the continuous embedding above, we can consider the random process $x:\Omega \times [-r,u] \longrightarrow \R^d$ as a random variable
$$
x: \Omega \longrightarrow M_2([-r,u], \R^d) 
$$
in $L^2(\Omega, M_2([-r,u],\R^d))$, that is
$$
\lVert x \rVert_{L^2(\Omega, M_2([-r,u],\R^d))} = \left(\int_{\Omega} \|x(\omega)\|_{M_2([-r,u],\R^d)}^2 P(d\omega) \right)^{1/2} < \infty.
$$
For later use, we write $L_A^2(\Omega,M_2([-r,u],\R^d))$ for the subspace of $L^2(\Omega,M_2([-r,u],\R^d))$ of elements that admit an $(\mathcal{F}_t)_{t\in [0,u]}$-adapted modification.

\vspace{2mm}
To deal with memory and delay we use the concept of segment of $x$. 
Given a process $x$, some delay gap $r>0$, and a specified time $t\in[0,T]$, the {\it segment of $x$} in the past time interval $[t-r,t]$ is denoted by $x_t(\omega,\cdot): [-r,0] \rightarrow \R^d$ and it is defined as 
$$
x_t(\omega,s):= x(\omega, t+s), \qquad s\in[-r,0].
$$
So $x_t(\omega,\cdot)$ is the segment of the $\omega$-trajectory of the process $x$, and contains all the information of the past down to time $t-r$. 
In particular, the segment of $x_0$ relative to time $t=0$ is the initial path and carries the information about the process from before $t=0$. 

Assume that, for each $\omega\in\Omega$, $x(\cdot,\omega)\in L^2([-r,T], \R^d)$. 
Then $x_t(\omega)$ can be seen as an element of $L^2([-r,0], \R^d)$ for each $\omega\in\Omega$ and $t\in[0,T]$. 
Indeed the couple $(x(t),x_t)$ is a $\mathcal{F}_t$-measurable random variable with values in $M_2$, i.e. $(x(t,\omega),x_t(\omega, \cdot)) \in M_2$, given $\omega\in \Omega$.

\vspace{2mm}
Let us consider an $\F_0$-measurable random variable $\eta\in L^2(\Omega,M_2)$. 
To shorten notation we write $\mathbb{M}_2 := L^2(\Omega,M_2)$.
A stochastic functional differential equation (SFDE), is written as

\begin{align}\label{delayeq0}
\begin{cases}
dx(t)= f(t,x(t),x_t)dt + g(t,x(t),x_t)dW(t), \ \ t\in [0,T] \\
(x(0),x_0)=\eta \in \mathbb{M}_2
\end{cases}
\end{align}
where
\begin{align*}
 f:[0,T]\times M_2\rightarrow \R^d\quad\text{ and }\quad g:[0,T]\times M_2\rightarrow L(\R^m,\R^d).
\end{align*}

\vspace{5mm}
\subsection{Existence and uniqueness of solutions}
Under suitable hypotheses on the functionals $f$ and $g$, one obtains existence and uniqueness of the strong solution (in the sense of $L^2$) of the SFDE (\ref{delayeq0}). The solution is a process $x\in L^2 (\Omega, M_2([-r,T],\R^d))$ admitting an $(\mathcal{F}_t)_{t\in [0,T]}$-adapted modification, that is, $x\in L_A^2(\Omega, M_2([-r,T],\R^d))$. 

We say that two processes $x^1,x^2\in L^2(\Omega, M_2([-r,T],\R^d))$ are $L^2$-unique, or unique in the $L^2$-sense if 
$\|x_1-x_2\|_{ L^2(\Omega, M_2([-r,T],\R^d))}=0$.

\vspace{3mm}
\noindent
\textbf{Hypotheses (EU):}
\begin{enumerate}
\item[(EU1)]  (Local Lipschitzianity) The drift and the diffusion functionals $f$ and $g$ are Lipschitz on bounded sets in the second variable uniformly w.r.t. the first, i.e., for each integer $n\geq 0$, there is a Lipschitz contant $L_n$ independent of $t\in[0,T]$ such that,
$$|f(t,\varphi_1)-f(t,\varphi_2)|_{\R^d}+\|g(t,\varphi_1)-g(t,\varphi_2)\|_{L(\R^m,\R^d)} \leq L_n \|\varphi_1 - \varphi_2\|_{M_2}$$
for all $t\in [0,T]$ and functions $\varphi_1,\varphi_2\in M_2$ such that $\|\varphi_1\|_{M_2}\leq n$, $\|\varphi_2\|_{M_2}\leq n$.
\item[(EU2)] (Linear growths) There exists a constant $C>0$ such that,
$$|f(t,\psi)|_{\R^d}+\|g(t,\psi)\|_{L(\R^m,\R^d)} \leq C \left( 1+ \|\psi\|_{M_2}\right)$$
for all $t\in[0,T]$ and $\psi\in M_2$.
\end{enumerate}

\vspace{2mm}
The following result belongs to \cite[Theorem 2.1]{SMohammed2}. Its proof is based on an approach similar to the one in the classical deterministic case based on successive Picard approximations.
\begin{theorem}[Existence and Uniqueness]\label{ExistenceAndUniquenessSFDE}
Given Hypotheses \textbf{\emph{(EU)}} on the coefficients $f$ and $g$ and the initial condition $\eta\in\M_2$, the SFDE (\ref{delayeq0}) has a solution $^\eta x \in L_A^2(\Omega,M_2([-r,T],\R^d))$ which is unique in the sense of $L^2$. The solution (or better its adapted representative) is a process $^\eta x: \Omega \times [-r,T] \rightarrow \R^d$ such that
\begin{enumerate}
\item[(1)] $^{\eta} x (t) = \eta(t)$, $t\in [-r,0]$.
\item[(2)] $^{\eta} x (\omega) \in M_2([-r,T],\R^d)$ $\omega$-a.s.
\item[(3)] For every $t\in[0,T]$, $^{\eta} x(t) : \Omega \rightarrow \R^d$ is $\mathcal{F}_t$-measurable.
\end{enumerate}
\end{theorem}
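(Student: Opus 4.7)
The plan is to follow the classical Picard iteration scheme, adapted to the functional setting of segments. First I would reduce to the globally Lipschitz case via a standard localization: define stopping times $\tau_n := \inf\{t\in[0,T]:\|({}^\eta x(t),{}^\eta x_t)\|_{M_2}\geq n\}\wedge T$ and truncate $f,g$ outside $\{\|\varphi\|_{M_2}\leq n\}$, so that on each event $\{\tau_n = T\}$ the solutions of the truncated and original equations coincide. Linear growth (EU2) gives an a priori bound on $E[\sup_{t\leq T}|{}^\eta x(t)|^2]$ independent of $n$, which makes $\tau_n\uparrow T$ almost surely and lets us patch the local solutions into a global one. From here on, I may assume $f,g$ globally Lipschitz with a single constant $L$.

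Next I would construct the Picard iterates by setting ${}^0x(t)=\eta(t)$ for $t\in[-r,0]$ and ${}^0x(t)=\eta(0)$ for $t\in[0,T]$, and
\begin{equation*}
{}^{n+1}x(t) = \eta(0)+\int_0^t f(s,{}^nx(s),{}^nx_s)\,ds+\int_0^t g(s,{}^nx(s),{}^nx_s)\,dW(s),\qquad t\in[0,T],
\end{equation*}
extended by $\eta$ on $[-r,0]$. An induction using (EU2), the Cauchy–Schwarz inequality for the $dt$ integral, and the Burkholder–Davis–Gundy inequality for the stochastic integral shows each ${}^nx$ lies in $L^2_A(\Omega,M_2([-r,T],\R^d))$. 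The crucial estimate is the contraction bound: writing $\Delta_n(t):=E[\sup_{0\leq s\leq t}|{}^{n+1}x(s)-{}^nx(s)|^2]$ and using that
\begin{equation*}
\|{}^nx_s-{}^{n-1}x_s\|_{M_2}^2 = |{}^nx(s)-{}^{n-1}x(s)|^2 + \int_{(s-r)\vee 0}^{s}|{}^nx(u)-{}^{n-1}x(u)|^2\,du
\end{equation*}
(the piece on $[-r,0]$ vanishes since both iterates equal $\eta$ there), the Lipschitz property (EU1) together with BDG yields $\Delta_n(t)\leq K\int_0^t \Delta_{n-1}(s)\,ds$ for some $K$ depending on $L,T,r$. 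Iterating gives $\Delta_n(T)\leq \Delta_0(T)\cdot(KT)^n/n!$, so the sequence is Cauchy uniformly on $[0,T]$ in the norm $E[\sup_{t\leq T}|\cdot|^2]$, hence also in $L^2_A(\Omega,M_2([-r,T],\R^d))$. The limit ${}^\eta x$ inherits adaptedness and continuity of sample paths from the iterates, and standard dominated/continuity arguments allow passing to the limit inside $f$ and $g$ in the integral equation, producing a solution satisfying (1)–(3).

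For uniqueness in $L^2$, if ${}^\eta x^1,{}^\eta x^2$ are two solutions, the same Lipschitz/BDG estimate applied to their difference yields
\begin{equation*}
E\Bigl[\sup_{0\leq s\leq t}|{}^\eta x^1(s)-{}^\eta x^2(s)|^2\Bigr]\leq K\int_0^t E\Bigl[\sup_{0\leq u\leq s}|{}^\eta x^1(u)-{}^\eta x^2(u)|^2\Bigr]ds,
\end{equation*}
and Gronwall's lemma forces the difference to vanish, hence $\|{}^\eta x^1-{}^\eta x^2\|_{L^2(\Omega,M_2([-r,T],\R^d))}=0$. The main obstacle I anticipate is bookkeeping the segment norm: the $M_2$-norm of $x_s$ mixes the point value $x(s)$ with an $L^2$ integral over $[s-r,s]$, so care is needed to ensure that the Lipschitz step produces an integral inequality amenable to Gronwall rather than a term that blows up through the past. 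This is handled by noting that the integral over $[s-r,s]$ is bounded by $\int_0^s|{}^nx(u)-{}^{n-1}x(u)|^2du$ once the initial segment contribution cancels, which is exactly what keeps the iteration contractive.
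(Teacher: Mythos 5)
Your proposal is correct and follows exactly the route the paper indicates: the paper does not reproduce a proof but cites \cite{SMohammed2} and notes that the argument proceeds by successive Picard approximations, which is precisely your scheme (localization to the globally Lipschitz case, Picard iterates with the contraction estimate controlling the segment norm through $\int_{(s-r)\vee 0}^{s}|\cdot|^2\,du$, and Gronwall for uniqueness). Your handling of the initial-segment cancellation in the $M_2$-norm is the right bookkeeping and matches the standard treatment in the cited reference.
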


\noindent
From the above we see that it makes sense to write
$$
^{\eta} x(t)=
\begin{cases}\eta(0) + \int_0^t f(u, \ ^{\eta} x(u), \ ^{\eta} x_u)du + \int_0^t g(u, \ ^{\eta} x(u), \ ^{\eta} x_u)dW(u), \ t\in[0,T] \\
\eta(t), \ t\in[-r,0].
\end{cases}
$$
Observe that the above integrals are well defined. In fact,  the process 
$$
(\omega,t) \mapsto ( \ ^{\eta} x(t,\omega), \ ^{\eta}  x_t(\omega))
$$ 
belongs to $\M_2$ and is adapted since $x$ is pathcontinuous and adapted and its composition with the \mbox{deterministic} coefficients $f$ and $g$ is then adapted as well. Note that $^{\eta} x$ represents the solution starting off at time $0$ with initial condition $\eta\in \mathbb{M}_2$.

\vspace{2mm}
One could consider the same dynamics but starting off at a later time, let us say, \mbox{$s\in(0,T]$}, with initial condition $\eta\in \mathbb{M}_2$. Namely, we could consider:
\begin{align}\label{delayeq2}
\begin{cases}
dx(t)= f(t,x(t),x_t)dt + g(t,x(t),x_t)dW(t), \quad t\in [s,T] \\
x(t)=\eta(t-s), \quad t\in [s-r,s].
\end{cases}
\end{align}
Again, under \textbf{(EU)} the SFDE (\ref{delayeq2}) has the solution,
\begin{align}\label{delayeq2integral}
^{\eta} x^s (t)=
\begin{cases}\eta(0) + \int_s^t f(u, \ ^{\eta} x^s(u), \ ^{\eta} x_u^s)du + \int_s^t g(u, \ ^{\eta} x^s(u), \ ^{\eta} x_u^s )dW(u), \quad t\in[s,T] \\
\eta(t-s), \quad t\in[s-r,s]
\end{cases}
\end{align}

The right-hand side superindex in $^\eta x^s$ denotes the starting time. 
We will omit the superindex when starting at 0, $^\eta x^0 =  \ ^\eta x$. The interest of defining the solution to \eqref{delayeq2} starting at any time $s$ comes from the semigroup property of the flow of the solution which we present in the next subsection.
For this reason we introduce the notation 
\begin{align}\label{SemiflowNotation}
 X^s_t(\eta, \omega) := X(s,t, \eta, \omega) := (^\eta x^s(t, \omega), ^\eta x^s_t(\omega)), \qquad \omega\in \Omega, \ s \leq t.
\end{align}

In relation to (\ref{delayeq2}) we also define the following evaluation operator: $$\rho_0 : M_2 \rightarrow \R^d,\quad\rho_0\varphi := v\quad\text{for any }\varphi=(v, \theta)\in M_2.$$ We observe here that the random variable $^\eta x^s(t)$ is an evaluation at 0 of the process $X_t^s(\eta)$, $t\in [s,T]$.

\vspace{2mm}
\subsection{Differentiability of the solution}
We recall that our goal is the study of the influence of the initial path $\eta$ on the functionals of the solution of (\ref{delayeq0}).
For this we need to ensure the existence of an at-least-once differentiable stochastic flow for (\ref{delayeq0}).
Hereafter we discuss the differentiability conditions on the coefficients of the dynamics to ensure such property on the flow.

In general, suppose we have $E$ and $F$ Banach spaces, $U\subseteq E$ an open set and $k\in\N$. We write $L^k(E,F)$ for the space of continuous $k$-multilinear operators $A:E^k\rightarrow F$ endowed with the uniform norm $$\lVert A\rVert_{L^k(E,F)}:=\sup\{\lVert A(v_1,\dots,v_k)\rVert_F,\,\lVert v_i\rVert_E\leq1,\,i=1,\dots,k\}.$$
Then an operator $f:U\rightarrow F$ is said to be of class $\mathcal{C}^{k,\delta}$ if it is $C^k$ and $D^k f:U \rightarrow L^k(E,F)$ is $\delta$-H\"older continuous on bounded sets in $U$. Moreover, $f:U\rightarrow F$ is said to be of class $\mathcal{C}_b^{k,\delta}$ if it is $C^k$, $D^k f:U \rightarrow L^k(E,F)$ is $\delta$-H\"older continuous on $U$, and all its derivatives $D^j f$, $1\leq j\leq k$ are globally bounded on $U$. The derivative $D$ is taken in the Fr\'echet sense. 

\vspace{2mm}
First of all we consider SFDEs in the special case when
$$
g(t,(\varphi(0),\varphi(\cdot)))=g(t,\varphi(0)), \quad \varphi=(\varphi(0),\varphi(\cdot))\in \mathbb{M}_2
$$
that is, $g$ is actually a function $[0,T]\times\R^d\rightarrow\R^{d\times m}$.\

For completeness we give the definition of stochastic flow.


\begin{definition}\label{stochflow}
Denote by $S([0,T]):=\{s,t\in [0,T]: 0\leq s<t<T\}$. Let $E$ be a Banach space. A stochastic $\mathcal{C}^{k,\delta}$-semiflow on $E$ is a measurable mapping $X: S([0,T]) \times E\times \Omega \rightarrow E$ satisfying the following properties:
\begin{enumerate}
\item[(i)] For each $\omega \in \Omega$, the map $X(\cdot,\cdot,\cdot,\omega): S([0,T]) \times E\rightarrow E$
is continuous.
\item[(ii)] For fixed $(s,t,\omega) \in S([0,T])\times\Omega$ the map $X(s,t,\cdot,\omega): E  \rightarrow E$
is $\mathcal{C}^{k,\delta}$.
\item[(iii)] For $0\leq s\leq u\leq t$, $\omega\in \Omega$ and $x\in E$, the property
$X(s,t,\eta,\omega)=X(u,t,X(s,u,\eta,\omega),\omega)$ holds.
\item[(iv)] For all $(t,\eta,\omega)\in [0,T]\times E \times \Omega$, one has $X(t,t,\eta,\omega)=\eta$.
\end{enumerate}
\end{definition}

\noindent In our setup, we consider the space $E=M_2$.\\
\par
\noindent\textbf{Hypotheses (FlowS):}

\begin{enumerate}
\item[(FlowS1)] The function $f:[0,T]\times M_2\rightarrow \R^d$ is jointly continuous; the map $M_2\ni\varphi\mapsto f(t,\varphi)$ is Lipschitz on bounded sets in $M_2$ and $\mathcal{C}^{1,\delta}$ uniformly in $t$ (i.e. the $\delta$-H\"older constant is uniformly bounded in $t\in[0,T]$) for some $\delta\in(0,1]$.
\item[(FlowS2)] The function $g:[0,T]\times \R^d\rightarrow \R^{d\times m}$ is jointly continuous; the map $\R^d\ni v\mapsto g(t,v)$ is  $\mathcal{C}_b^{2,\delta}$ uniformly in $t$.
\item[(FlowS3)] \textbf{One} of the following conditions is satisfied:
\begin{enumerate}
\item[(a)]There exist $C>0$ and $\gamma\in[0,1)$ such that
$$|f(t,\varphi)|\leq C(1+\lVert\varphi\rVert_{M_2}^\gamma)$$
for all $t\in[0,T]$ and all $\varphi\in M_2$
\item[(b)] For all $t\in[0,T]$ and $\varphi\in M_2$, one has $f(t,\varphi,\omega)=f(t,\varphi(0),\omega)$. Moreover, it exists $r_0\in(0,r)$ such that $$f(t,\varphi,\omega)=f(t,\tilde\varphi,\omega)$$ for all $t\in[0,T]$ and all $\tilde\varphi$ such that $\varphi(\cdot)1_{[-r,-r_0]}(\cdot)=\tilde\varphi(\cdot)1_{[-r,-r_0]}(\cdot)$.
\item[(c)] For all $\omega\in\Omega$,
$$\sup_{t\in[0,T]}\lVert(D\psi(t,v,\omega))^{-1}\rVert_{M_2}<\infty,$$
where $\psi(t,v)$ is defined by the stochastic differential equation
\begin{align*}
\begin{cases}
 d\psi(t,v)=g(t,\psi(t,v))dW(t),\\
 \psi(0,v)=v.
\end{cases}
\end{align*}
Moreover, there exists a constant C such that
$$|f(t,\varphi)|\leq C(1+\lVert\varphi\rVert_{M_2})$$
for all $t\in[0,T]$ and $\varphi\in M_2$.
\end{enumerate}
\end{enumerate}

\noindent Then, \cite[Theorem 3.1]{SMohammed3} states the following theorem.

\begin{theorem}\label{SemiflowSpecial}
 Under Hypotheses \textbf{\emph{(EU)}} and \textbf{\emph{(FlowS)}}, $X^s_t(\eta,\omega)$ defined in \eqref{SemiflowNotation} is a $\mathcal{C}^{1,\varepsilon}$-semiflow for every $\varepsilon\in(0,\delta)$.
\end{theorem}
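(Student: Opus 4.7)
The plan is to verify conditions (i)--(iv) of Definition \ref{stochflow} in turn, with the work concentrating almost entirely on establishing Fréchet differentiability of $\eta\mapsto X^s_t(\eta,\omega)$ together with $\varepsilon$-Hölder continuity of its derivative. Properties (iv) and (iii) are essentially formal: (iv) holds by construction of \eqref{delayeq2}, while the cocycle (iii) is a consequence of $L^2$-uniqueness from Theorem \ref{ExistenceAndUniquenessSFDE} applied to the SFDE restarted on $[u,T]$ with initial segment $X^s_u(\eta,\omega)$. Continuity in $(s,t,\eta)$ (property (i)) would follow from a Gronwall-type estimate based on the local Lipschitzianity (EU1) and linear growth (EU2), after a localisation on $\|\eta\|_{M_2}$; in the $L^2$-sense this is standard, and path-continuity in $t$ combined with continuity in $\eta$ in probability can be upgraded to joint continuity by a Kolmogorov criterion once the variational estimates described below are available.

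The heart of the proof is (ii). Because by (FlowS2) the diffusion depends on its $M_2$-argument only through the value at $0$, I would employ an Itô--Doss--Sussmann-type reduction, which is already built into hypothesis (FlowS3c): introduce the auxiliary SDE
\begin{align*}
d\psi(t,v)=g(t,\psi(t,v))\,dW(t),\qquad \psi(0,v)=v,
\end{align*}
whose solution, by (FlowS2) and the Kunita flow theorem, possesses a stochastic $\mathcal{C}^{2,\delta}$-flow $v\mapsto\psi(t,v)$ with $\mathcal{C}^{1,\delta}$-inverse in $v$. Writing $^\eta x(t)=\psi(t,y(t))$ and applying Itô's formula formally transforms \eqref{delayeq0} into a random functional ODE for $y$ whose coefficients are pathwise $\mathcal{C}^{1,\delta}$ in the segment variable. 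This is where (FlowS3) enters, to control the growth of the transformed drift: case (a) supplies sublinearity directly, case (b) makes $f$ independent of the dangerous neighbourhood of the endpoint $0$ where the noise acts, and case (c) combines the almost-sure boundedness of $(D\psi)^{-1}$ with linear growth of $f$. Differentiating the resulting pathwise integral equation in $\eta\in M_2$ produces the variational equation for $Z^s_t(\eta):=DX^s_t(\eta)$, which is a linear SFDE with values in $L(M_2,M_2)$.

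Existence and a priori $L^p$-bounds for $Z^s_t(\eta)$ follow from a Picard/Gronwall scheme using boundedness of $Dg$ from (FlowS2) and the local Lipschitz constants of $Df$ from (FlowS1). Genuine Fréchet differentiability (as opposed to Gâteaux) is obtained by estimating the remainder $X^s_t(\eta+h)-X^s_t(\eta)-Z^s_t(\eta)\cdot h$ in $L^p(\Omega;M_2)$ and showing it is $o(\|h\|_{M_2})$ uniformly on bounded sets, via the $\mathcal{C}^{1,\delta}$-regularity of $f$ and $\mathcal{C}^{2,\delta}$-regularity of $g$. A $\delta$-Hölder bound
\begin{align*}
\lVert Z^s_t(\eta_1)-Z^s_t(\eta_2)\rVert_{L(M_2,M_2)}\leq C\,\lVert \eta_1-\eta_2\rVert_{M_2}^{\delta}
\end{align*}
then follows from the linear SFDE satisfied by the difference $Z^s_t(\eta_1)-Z^s_t(\eta_2)$ combined with the $\delta$-Hölder continuity of $Df,Dg$; a Kolmogorov continuity argument upgrades these $L^p$-estimates to an almost-sure $\varepsilon$-Hölder continuity for any $\varepsilon<\delta$, giving the $\mathcal{C}^{1,\varepsilon}$-semiflow property required by Definition \ref{stochflow}.

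The principal obstacle is handling the memory and the stochastic integral simultaneously: the segment map $\eta\mapsto X^s_t(\eta)_t$ is only an isometry of $M_2$ and carries no smoothing, so regularity in time cannot be traded for regularity in the spatial direction; conversely, $\partial_\eta\int g(u,x(u))\,dW(u)$ has no pathwise meaning and standard differentiation under the stochastic integral is delicate, which is exactly the reason for the reduction through $\psi$. Each alternative in (FlowS3) is tailored to neutralise a different failure mode of this scheme, and case (c), which relies on an a priori almost-sure bound for $(D\psi)^{-1}$ not automatic from (FlowS2) alone, is the most subtle ingredient to verify in concrete examples.
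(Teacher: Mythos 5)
The paper does not prove this theorem at all: it is quoted verbatim from Mohammed and Scheutzow \cite[Theorem 3.1]{SMohammed3}, so there is no in-paper argument to compare against. Your outline is, however, a faithful reconstruction of the strategy used in that reference: the formal verification of (iii)--(iv) via uniqueness and the restart construction, and the reduction of (ii) to a random functional ODE by conjugating with the flow $\psi$ of the driftless SDE (the It\^o--Doss--Sussmann device), with the three alternatives in \textbf{(FlowS3)} each serving to control the transformed drift. The sketch is schematic in places — the passage from $L^p$/Gronwall estimates to almost-sure $\varepsilon$-H\"older regularity of $\eta\mapsto DX^s_t(\eta,\omega)$ on the infinite-dimensional state space $M_2$ requires more than a finite-dimensional Kolmogorov criterion, and the joint continuity in $(s,t,\eta)$ of property (i) likewise needs care — but these are exactly the technical points worked out in the cited source, and your identification of where each hypothesis enters is accurate.
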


Next, we can consider a more general diffusion coefficient $g$ following the approach introduced in \cite[Section 5]{SMohammed3}. Let us assume that the function $g$ is of type:

\begin{align*}
 g(t,(x(t),x_t))=\bar g(t,x(t),a+\int_0^th(s,(x(s),x_s))ds),
\end{align*}
for some constant $a$ and some functions $\bar g$ and $h$ satisfying some regularity conditions that will be specified later. This case can be transformed into a system of the previous type where the diffusion coefficient does not explicitly depend on the segment. In fact, defining $y(t):=(y^{(1)}(t),y^{(2)}(t))^\top$ where $y^{(1)}(t):=x(t)$, $t\in[-r,T]$, $y^{(2)}(t):=a+\int_0^th(s,(x(s),x_s))ds$, $t\in[0,T]$ and $y^{(2)}(t):=0$ on $[-r,0]$, we have the following dynamics for $y$:
\begin{align}\label{transformeddelayeq}
\begin{cases}
 dy(t)=F(t,y(t),y_t)dt+G(t,y(t))dW(t),\\
 y(0)=(\eta(0),a)^\top,\, y_0=(\eta,0)^\top,
\end{cases}
\end{align}

\noindent where
\begin{align}
 F(t,y(t),y_t)=\begin{pmatrix}
                f(t,y^{(1)}(t),y^{(1)}_t)\\
                h(t,y^{(1)}(t),y^{(1)}_t)
               \end{pmatrix},\,
 G(t,y(t))=\begin{pmatrix}
            \bar g(t,y^{(1)}(t),y^{(2)}(t))\\
            0
           \end{pmatrix}.
\end{align}

\noindent The transformed system \eqref{transformeddelayeq} is now an SFDE of type \eqref{delayeq0} where the diffusion coefficient does not explicitely depend on the segment. That is the differentiability of the flow can be studied under the corresponding Hypotheses \textbf{(FlowS)}. Hereafter, we specify the conditions on $\bar g$ and $h$ so that Hypotheses \textbf{(EU)} and \textbf{(FlowS)} are satisfied by the transformed system \eqref{transformeddelayeq}. Since the conditions \textbf{(FlowS3)(a)} and \textbf{(b)} are both too restrictive for \eqref{transformeddelayeq}, we will make sure that \textbf{(FlowS3)(c)} is satisfied. Under these conditions we can guarantee the differentiability of the solutions to the SFDE \eqref{delayeq2} for the above class of diffusion coefficient $g$.\\
\par
\noindent\textbf{Hypotheses (Flow):}

\begin{enumerate}
\item[(Flow1)] $f$ satisfies \textbf{(FlowS1)} and there exists a constant C such that
$$|f(t,\varphi)|\leq C(1+\lVert\varphi\rVert_{M_2})$$
for all $t\in[0,T]$ and $\varphi\in M_2$.
\item[(Flow2)] $g(t,\varphi)$ is of the following form
\begin{align*}
 g(t,\varphi)=\bar g(t,v,\tilde g(\theta)),\quad t\in[0,T],\quad \varphi=(v,\theta)\in M_2
\end{align*}
where $\bar g$ satisfies the following conditions:
\begin{enumerate}
 \item[(a)] The function $\bar g:[0,T]\times \R^{d+k}\rightarrow \R^{d\times m}$ is jointly continuous; the map $\R^{d+k}\ni y\mapsto \bar g(t,y)$ is  $\mathcal{C}_b^{2,\delta}$ uniformly in $t$.
 \item[(b)] For each $v\in\R^{d+k}$, let $\{\Psi(t,v)\}_{t\in[0,T]}$ solve the stochastic differential equation
 \begin{align*}
  \Psi(t,v)=v+\begin{pmatrix}
              \int_0^t\bar g(s,\Psi(s,v))dW(s)\\
              0
             \end{pmatrix},
 \end{align*}
 where $0$ denotes the null-vector in $\R^k$. Then $\Psi(t,v)$ is Fr\'echet differentiable w.r.t. $v$ and the Jacobi-matrix $D\Psi(t,v)$ is invertible and fulfils, for all $\omega\in\Omega$,
 \begin{align*}
  \sup_{\stackrel{t\in[0,T]}{v\in\R^{d+k}}}\|D\Psi^{-1}(t,v,\omega)\|<\infty,\,\text{where }\|\cdot\|\text{ denotes any matrix norm.}
 \end{align*}
\end{enumerate} 
and, $\tilde g: L^2([-r,0],\R^d)\rightarrow\R^k$ satisfies the following conditions:
\begin{enumerate}
 \item[(c)] It exists a jointly continuous function $h:[0,T]\times M_2\rightarrow\R^k$ s.t. for each $\tilde\varphi \in L^2([-r,T],\R^d)$,
 \begin{align*}
 \tilde g(\tilde\varphi_t)=\tilde g(\tilde\varphi_0)+\int_0^th(s,(\tilde\varphi(s),\tilde\varphi_s))ds,
 \end{align*}
 where $\tilde\varphi_t\in L^2([-r,0],\R^d)$ is the segment at $t$ of a representative of $\tilde\varphi$.
 \item[(d)] $M_2\ni\varphi\mapsto h(t,\varphi)$ is Lipschitz on bounded sets in $M_2$, uniformly w.r.t. $t\in[0,T]$ and $\mathcal{C}^{1,\delta}$ uniformly in $t$.
\end{enumerate}
\end{enumerate}

\begin{corollary}
 Under Hypotheses \textbf{\emph{(Flow)}}, the solution $X_t^s(\eta)=X(s,t,\eta,\omega)$, $\omega\in \Omega$, $t\geq s$ to \eqref{delayeq2} is a $\mathcal{C}^{1,\varepsilon}$-semiflow for every $\varepsilon\in(0,\delta)$. In particular, $\varphi\mapsto X(s,t,\varphi,\omega)$ is $C^1$ in the Fr\'echet sense.
\end{corollary}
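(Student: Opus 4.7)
The natural strategy is to reduce the corollary to Theorem \ref{SemiflowSpecial} via the transformation $y=(y^{(1)},y^{(2)})^\top$ introduced just above the statement. The plan is to show that, under Hypotheses \textbf{(Flow)}, the transformed SFDE \eqref{transformeddelayeq} satisfies Hypotheses \textbf{(EU)} and \textbf{(FlowS)}; then Theorem \ref{SemiflowSpecial} applied on $M_2([-r,0],\R^{d+k})$ yields a $\CC^{1,\varepsilon}$-semiflow $Y(s,t,\cdot,\omega)$, and the original semiflow $X(s,t,\eta,\omega)$ will be obtained by projecting $Y(s,t,\iota(\eta),\omega)$ onto the first $d$ coordinates, where $\iota(\eta):=((\eta(0),\tilde g(\eta))^\top,(\eta,0)^\top)$ is the augmentation dictated by the transformation. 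Concretely, setting $y^{(1)}(t):=x(t)$ and $y^{(2)}(t):=\tilde g(\eta)+\int_0^t h(s,(x(s),x_s))\,ds$ for $t\in[0,T]$, together with $y^{(2)}\equiv 0$ on $[-r,0]$, condition \textbf{(Flow2)(c)} gives $y^{(2)}(t)=\tilde g(x_t)$, so $g(t,(x(t),x_t))=\bar g(t,y^{(1)}(t),y^{(2)}(t))$ and \eqref{delayeq2} rewritten in $y$ is exactly \eqref{transformeddelayeq}.

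The second step is to verify \textbf{(EU)} and \textbf{(FlowS)} for the new coefficients. The drift $F=(f,h)^\top$ inherits local Lipschitzianity, linear growth, and the $\CC^{1,\delta}$-property uniformly in $t$ from \textbf{(Flow1)} applied to $f$ and \textbf{(Flow2)(d)} applied to $h$; this covers \textbf{(EU1)}, \textbf{(EU2)}, and \textbf{(FlowS1)}. The diffusion $G(t,y)=(\bar g(t,y^{(1)},y^{(2)}),0)^\top$ depends only on the present value of $y$ and is $\CC_b^{2,\delta}$ uniformly in $t$ by \textbf{(Flow2)(a)}, which is \textbf{(FlowS2)}. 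For \textbf{(FlowS3)(c)} the auxiliary SDE $d\Psi(t,v)=G(t,\Psi(t,v))\,dW(t)$, $\Psi(0,v)=v\in\R^{d+k}$, is exactly the one appearing in \textbf{(Flow2)(b)}; because the lower block of $G$ vanishes, $\Psi^{(2)}\equiv v^{(2)}$ and the Jacobian $D\Psi(t,v)$ is block lower-triangular with identity in the $(2,2)$-block, so the uniform boundedness of $D\Psi^{-1}$ follows immediately from the hypothesis in \textbf{(Flow2)(b)}.

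Once Theorem \ref{SemiflowSpecial} is applied to the transformed system, the semigroup property \textbf{(iii)} and $X(t,t,\eta,\omega)=\eta$ transfer to $X$ through the projection without difficulty, so only the Fréchet $\CC^{1,\varepsilon}$-property remains. The smoothness in the augmented variable $v\in\R^{d+k}$ is supplied by the theorem; what has to be chained with it is the smoothness of the augmentation map $\iota$. The main obstacle I anticipate is precisely this last step: justifying that $\iota$ is Fréchet $\CC^{1,\varepsilon}$, which reduces to Fréchet differentiability of $\tilde g:L^2([-r,0],\R^d)\to\R^k$. The plan is to exploit the integral identity in \textbf{(Flow2)(c)} together with the $\CC^{1,\delta}$-regularity of $h$ from \textbf{(Flow2)(d)} to differentiate $\tilde g$ along spatial perturbations of the segment (reading the identity as defining $\tilde g$ up to the known value $\tilde g(\tilde\varphi_0)$, then perturbing $\tilde\varphi$ and passing the derivative under the integral); after this, the chain rule combined with the projection closes the argument and yields the claimed $\CC^{1,\varepsilon}$-semiflow for every $\varepsilon\in(0,\delta)$.
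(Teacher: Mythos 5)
Your reduction is exactly the paper's argument: the corollary is stated without a separate proof precisely because the preceding paragraphs transform \eqref{delayeq2} into the system \eqref{transformeddelayeq}, whose diffusion depends only on the present state, and check that \textbf{(Flow1)}--\textbf{(Flow2)} deliver \textbf{(EU)}, \textbf{(FlowS1)}, \textbf{(FlowS2)} and \textbf{(FlowS3)(c)} for the augmented coefficients $F=(f,h)^\top$ and $G=(\bar g,0)^\top$, so that Theorem \ref{SemiflowSpecial} applies. Your verification of these conditions, including the block-triangular structure of $D\Psi$ forced by the vanishing lower block of $G$, matches what the paper intends.

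The one place where you go beyond the paper is the smoothness of the augmentation map $\iota(\eta)=((\eta(0),\tilde g(\eta))^\top,(\eta,0)^\top)$, and there your plan does not quite close. The identity in \textbf{(Flow2)(c)} relates $\tilde g(\tilde\varphi_t)$ to $\tilde g(\tilde\varphi_0)$ along the time evolution of the segments of a \emph{single} path $\tilde\varphi$; if you perturb $\tilde\varphi$ and differentiate under the integral, the unknown term $D\tilde g(\tilde\varphi_0)$ reappears on the right-hand side, so the identity cannot by itself produce Fr\'echet differentiability of $\tilde g$ in the spatial variable. This is a genuine gap in the hypotheses as written (the paper is silent on it): one either has to assume additionally that $\tilde g:L^2([-r,0],\R^d)\to\R^k$ is $\mathcal{C}^{1,\delta}$, or restrict to the situation of Example \ref{LinearCoefficientsExample}, where $\tilde g$ is a bounded linear functional and hence trivially smooth. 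With that extra regularity of $\tilde g$ granted, your chain-rule-plus-projection conclusion is correct.
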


\vspace{2mm}
\section{Sensitivity analysis to the initial path condition}\label{section3}

From now on, we consider a stochastic process $x$ which satisfies dynamics \eqref{delayeq0}, where the coefficients $f$ and $g$ are such that conditions \textbf{(EU)} and \textbf{(Flow)} are satisfied.

\vspace{2mm}
Our final goal is to study the sensitivity of evaluations of type

\begin{align}\label{pricedef}
p(\eta)=E\left[ \Phi(X^0_T(\eta))\right]=E\left[ \Phi( ^{\eta} x(T), \ ^{\eta} x_T)\right], \ \eta \in \M_2
\end{align}

\noindent to the initial path in the model $^{\eta} x$. Here, $\Phi: M_2 \rightarrow \R$ is such that $\Phi(X_T^0(\eta))\in L^2(\Omega,\R)$. The sensitivity will be interpreted as the directional derivative

\begin{align}\label{DirectionalDerivative}
\partial_hp(\eta):=\frac{d}{d\varepsilon} p(\eta + \varepsilon h) \bigg|_{\varepsilon=0}= \lim_{\varepsilon \to 0} \frac{p(\eta+\varepsilon h) - p(\eta)}{\varepsilon}, \ \ h\in M_2.
\end{align}

\noindent Hence we shall study pertubations direction $h\in M_2$. The final aim is to give a representation of $\partial_hp(\eta)$ in which the function $\Phi$ is not directly differentiated. This is in the line  with the representation of the sensitivity parameter Delta by means of weights. See, e.g. the Malliavin weight introduced in \cite{Fournieetal, Fournieetal2} for the classical case of no memory. For this we impose some stronger regularity conditions on $f$ and $g$:\\
\par
\noindent\textbf{Hypotheses (H):}

\begin{enumerate}
\item[(H1)] (Global Lipschitzianity) $\varphi\mapsto f(t,\varphi)$, $\varphi\mapsto g(t,\varphi)$ globally Lipschitz uniformly in $t$ with Lipschitz constants $L_f$ and $L_g$, i.e.
\begin{align*}
 |f(t,\varphi_1)-f(t,\varphi_2)|_{\R^d}\leq L_f \|\varphi_1 - \varphi_2\|_{M_2}\\
 \|g(t,\varphi_1)-g(t,\varphi_2)\|_{L(\R^m,\R^d)} \leq L_g \|\varphi_1 - \varphi_2\|_{M_2}
\end{align*}
for all $t\in[0,T]$ and $\varphi_1, \varphi_2\in M_2$.
\item[(H2)] (Lipschitzianity of the Fr\'echet derivatives) $\varphi\mapsto Df(t,\varphi)$, $\varphi\mapsto Dg(t,\varphi)$ are globally Lipschitz uniformly in $t$ with Lipschitz constants $L_{Df}$ and $L_{Dg}$, i.e.
\begin{align*}
 \|Df(t,\varphi_1)-Df(t,\varphi_2)\|\leq L_{Df} \|\varphi_1 - \varphi_2\|_{M_2}\\
 \|Dg(t,\varphi_1)-Dg(t,\varphi_2)\|\leq L_{Dg} \|\varphi_1 - \varphi_2\|_{M_2}
\end{align*}
for all $t\in[0,T]$ and $\varphi_1, \varphi_2\in M_2$.
\end{enumerate}

\noindent The corresponding stochastic $\mathcal{C}^{1,1}$-semiflow is again denoted by $X$.

\vspace{2mm}
Before proceeding, we give a simple example of SFDE satisfying all assumptions \textbf{(EU)}, \textbf{(Flow)} and \textbf{(H)}.

\begin{example}\label{LinearCoefficientsExample}
 Consider the SFDE \eqref{delayeq0} where the functions $f$ and $g$ are given by
 \begin{align*}
  f(t,\varphi)=M(t)\varphi(0)+\int_{-r}^0\bar M(s)\varphi(s)ds,\\
  g(t,\varphi)=\Sigma(t)\varphi(0)+\int_{-r}^0\bar\Sigma(s)\varphi(s)ds,
 \end{align*}
 where $M:[0,T]\rightarrow\R^{d\times d}$, $\bar M:[-r,0]\rightarrow\R^{d\times d}$, $\Sigma:[0,T]\rightarrow L(\R^d,\R^{d\times m})$, and $\bar\Sigma:[-r,0]\rightarrow L(\R^d,\R^{d\times m})$ are bounded differentiable functions, $\bar\Sigma(-r)=0$ and $s\mapsto \bar \Sigma'(s)=\frac{d}{ds}\bar \Sigma(s)$ are bounded as well.

 \vspace{2mm}
Obviously, $f$ and $g$ satisfy \textbf{(EU)} and \textbf{(H)} and therefore also \textbf{(Flow1)}. In order to check conditions \textbf{(Flow2)}, we note that
 \begin{align*}
  g(t,\varphi)&=\bar g(t,\varphi(0),\tilde g(\varphi(\cdot))),
 \end{align*}
 where
 \begin{align*}
  \bar g(t,y)=\Sigma(t)y^{(1)}+y^{(2)},\,y=(y^{(1)},y^{(2)})^\top,\text{ and }\,\tilde g(\varphi(\cdot))=\int_{-r}^0\bar\Sigma(s)\varphi(s)ds.
 \end{align*}
 The function $\bar g$ satisfies condition \textbf{(Flow2)(a)} as $\Sigma$ is bounded and continuous. Let us check condition \textbf{\emph{(Flow2)(b)}} in the case $d=m=1$. Then $\bar g(t,y)=\sigma(t)y^{(1)}+y^{(2)}$, where $\sigma$ is a real valued, differentiable function and $\Psi$ fulfils the two-dimensional stochastic differential equation
 \begin{align*}
 \begin{cases}
  \Psi^{(1)}(t,v)=v^{(1)}+\int_0^t\bar \sigma(s)\Psi^{(1)}(s,v)+v^{(2)}dW(s),\\
  \Psi^{(2)}(t,v)=v^{(2)},
 \end{cases}
 \end{align*}
 which has the solution
 \begin{align*}
  \Psi^{(1)}(t,v)&=\tilde\Psi(t)\left(v^{(1)}-\int_0^t \sigma(s)v^{(2)}\tilde\Psi^{-1}(s)ds+\int_0^tv^{(2)}\tilde\Psi^{-1}(s)dW(s)\right),\quad \Psi^{(2)}(t,v)=v^{(2)},
 \end{align*}
  with
 \begin{align*} 
  \tilde\Psi(t)&=\exp\left\{-\int_0^t\sigma^2(s)ds+\int_0^t\sigma(s)dW(s)\right\}.
 \end{align*}
 Therefore, we get that
 \begin{align*}
  D\Psi(t,v)&=\begin{pmatrix}
              1+\tilde\Psi(t) & \tilde\Psi(t)\left(-\int_0^t\sigma(s)\tilde\Psi^{-1}(s)ds+\int_0^t\tilde\Psi^{-1}(s)dW(s)\right)\\
              0 & 1
             \end{pmatrix}
 \end{align*}
 and
 \begin{align*}
  D\Psi^{-1}(t,v)&=\begin{pmatrix}
              \frac{1}{1+\tilde\Psi(t)} & -\frac{\tilde\Psi(t)}{1+\tilde\Psi(t)}\left(-\int_0^t\sigma(s)\tilde\Psi^{-1}(s)ds+\int_0^t\tilde\Psi^{-1}(s)dW(s)\right)\\
              0 & 1
             \end{pmatrix}    
 \end{align*}
 Using in fact that $\tilde\Psi(t)>0$ and applying the Frobenius norm $\|\cdot\|_F$, we obtain $\omega$-a.e.
 \begin{align*}
  \|D\Psi^{-1}(t,v)\|_F&=\tr\left((D\Psi^{-1}(t,v))^\top D\Psi^{-1}(t,v)\right)\\
  &\leq 2+\tilde\Psi^2(t)\left(-\int_0^t\sigma(s)\tilde\Psi^{-1}(s)ds+\int_0^t\tilde\Psi^{-1}(s)dW(s)\right)^2<\infty,
 \end{align*}
for $t\in[0,T]$, $v\in\R^2$. By this Hypothesis \textbf{(Flow2)(b)} is fulfilled.\\
 \par
 
Moreover, a simple application of partial integration and Fubini's theorem together with the fact that $\bar\Sigma(-r)=0$ shows that
 \begin{align*}
  \tilde g(\tilde \varphi_t)&=\int_{-r}^0\bar\Sigma(s)\tilde\varphi_t(s)ds=\int_{-r}^0\bar\Sigma(s)\tilde\varphi_0(s)ds+\int_0^t\bigg\{\Sigma(0)\tilde\varphi(u)-\int_{-r}^0\bar\Sigma'(s)\tilde\varphi_u(s)ds\bigg\}du\\
  &=\tilde g(\tilde \varphi_0)+\int_0^th(t,\tilde\varphi(u),\tilde\varphi_u)du.
 \end{align*}
 It can be easlily checked that $h(t,\varphi)=\Sigma(0)\varphi(0)-\int_{-r}^0\bar\Sigma'(s)\varphi(s)ds$ satisfies the conditions given in \textbf{\emph{(Flow2)(c)}} and \textbf{\emph{(d)}}. 
\end{example}

We are now ready to introduce two technical lemmas needed to prove our main results.

\begin{lemma}\label{DiffofSegmentequalsSegmentofDiff}
 Assume that the solution to \eqref{delayeq2} exists and has a $C^{1,1}$-semiflow $X^s_t(\eta,\omega)$, $s\leq t$, $\omega\in\Omega$. Then, the following equality holds for all $\omega\in\Omega$ and all directions $h\in M_2$:
 \begin{align*}
  DX^s_t(\eta,\omega)[h]=(D\,^\eta x^s(t,\omega)[h],D\,^\eta x^s(t+\cdot,\omega)[h])\in M_2.
 \end{align*}
\end{lemma}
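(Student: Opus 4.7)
The plan is to unpack Fr\'echet differentiability of $X^s_t:M_2\to M_2$ coordinatewise along the decomposition $M_2=\R^d\times L^2([-r,0],\R^d)$, and to match each coordinate of $DX^s_t(\eta,\omega)[h]$ against the corresponding pointwise-in-time object on the right-hand side. For the $\R^d$-coordinate the identification is immediate: the projection $\rho_0:M_2\to\R^d$ is bounded linear, $\rho_0\circ X^s_t(\eta,\omega)={}^\eta x^s(t,\omega)$, and hence by the chain rule $\rho_0 DX^s_t(\eta,\omega)[h]=D\,^\eta x^s(t,\omega)[h]$. This reproduces the first entry of the asserted pair.

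For the second coordinate, write $DX^s_t(\eta,\omega)[h]=(A,B)$ with $B\in L^2([-r,0],\R^d)$. Fr\'echet differentiability in the $M_2$-norm gives
\begin{align*}
 \int_{-r}^0\bigl|{}^{\eta+\varepsilon h}x^s(t+u,\omega)-{}^\eta x^s(t+u,\omega)-\varepsilon B(u)\bigr|^2\,du=o(\varepsilon^2)
\end{align*}
as $\varepsilon\to0$; equivalently, the difference quotients $\varepsilon^{-1}\bigl({}^{\eta+\varepsilon h}x^s(t+\cdot,\omega)-{}^\eta x^s(t+\cdot,\omega)\bigr)$ converge to $B$ in $L^2([-r,0],\R^d)$. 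Extracting a subsequence $\varepsilon_n\to0$ then yields pointwise convergence of these difference quotients to $B(u)$ for almost every $u\in[-r,0]$.

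On the other hand, for each fixed $u$ with $t+u\geq s$, applying the $C^{1,1}$-semiflow hypothesis to $X^s_{t+u}(\eta,\omega)$ and composing with $\rho_0$ shows that $\eta\mapsto{}^\eta x^s(t+u,\omega)$ is Fr\'echet differentiable on $M_2$, so the same difference quotients converge in $\R^d$ to $D\,^\eta x^s(t+u,\omega)[h]$ as $\varepsilon\to0$. For $u$ with $t+u\in[s-r,s)$, writing $\eta=(v,\theta)$ and $h=(v_h,\theta_h)$ in $M_2$, the initial-condition prescription $^\eta x^s(t+u,\omega)=\theta(t+u-s)$ is exactly linear in $\eta$, so the difference quotient equals $\theta_h(t+u-s)$ identically, and the symbol $D\,^\eta x^s(t+u,\omega)[h]$ on the right-hand side of the lemma is interpreted as this linear functional (well-defined as an element of $L^2$ in $u$). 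Uniqueness of limits forces $B(u)=D\,^\eta x^s(t+u,\omega)[h]$ for almost every $u\in[-r,0]$, which together with the first step gives the claimed equality in $M_2$.

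The main obstacle is the mismatch of convergence modes: the Fr\'echet derivative in $M_2$ only controls the segment in an $L^2$-sense, whereas the ingredients on the right-hand side are natural pointwise-in-time objects obtained by freezing $u$ and invoking the semiflow at time $t+u$. The subsequence argument above, combined with the $C^{1,1}$-semiflow property being available at every intermediate time $t+u\in[s,T]$, reconciles the two viewpoints and yields the identification almost everywhere, which is all that is needed for equality in $M_2$.
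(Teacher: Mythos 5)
Your proof is correct, but it follows a genuinely different route from the paper's. The paper expands $DX^s_t(\eta,\omega)[h]$ in an orthonormal basis $\{e_i\}$ of $M_2$, uses boundedness of the functionals $\langle\cdot,e_i\rangle_{M_2}$ to pull the Fr\'echet derivative inside each inner product, writes $\langle X^s_t(\eta,\omega),e_i\rangle_{M_2}=x^s(t,\omega)e_i(0)+\int_{-r}^0x^s(t+u,\omega)e_i(u)\,du$, and then differentiates term by term under the $du$-integral before resumming. You instead work directly with the product decomposition $M_2=\R^d\times L^2([-r,0],\R^d)$: the $\R^d$-coordinate via $\rho_0$ and the chain rule, and the $L^2$-coordinate by identifying the $L^2$-limit $B$ of the difference quotients with the pointwise Fr\'echet derivatives $D\,^\eta x^s(t+u,\omega)[h]$ through an a.e.-convergent subsequence, treating the interval $t+u<s$ separately by exact linearity in $\eta$. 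What your approach buys is precisely a justification of the step the paper leaves implicit: interchanging the Fr\'echet derivative with the Lebesgue integral in $u$ presupposes that the pointwise derivatives exist, are measurable in $u$, and assemble into the derivative of the $L^2$-valued segment map, which is essentially the content of the lemma; your subsequence argument resolves this cleanly and also delivers measurability and square-integrability of $u\mapsto D\,^\eta x^s(t+u,\omega)[h]$ for free, since it agrees a.e. with $B\in L^2$. The paper's basis expansion is shorter and avoids any discussion of pointwise versus $L^2$ convergence, at the cost of that unjustified interchange. Both arguments rely on the same underlying input, namely that the $C^{1,1}$-semiflow property holds at every intermediate time $t+u\in[s,T]$.
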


\begin{proof}
 Note that $DX^s_t(\eta,\omega)[h]\in M_2$. Let $\{e_i\}_{i\in\N}$ be an orthonormal basis of $M_2$. Then,
 \begin{align*}
  DX^s_t(\eta,\omega)[h]&=\sum_{i=0}^\infty\langle DX^s_t(\eta,\omega)[h],e_i\rangle_{_{M_2}}e_i=\sum_{i=0}^\infty D\langle X^s_t(\eta,\omega),e_i\rangle_{_{M_2}}[h]e_i\\
  &=\sum_{i=0}^\infty D\bigg(x^s(t,\omega)e_i(0)+\int_{-r}^0x^s(t+u,\omega)e_i(u)du\bigg)[h]e_i\\
  &=\sum_{i=0}^\infty \bigg(Dx^s(t,\omega)[h]e_i(0)+\int_{-r}^0Dx^s(t+u,\omega)[h]e_i(u)du\bigg)e_i\\
  &=\sum_{i=0}^\infty \langle (D\,^\eta x^s(t,\omega)[h],D\,^\eta x^s(t+\cdot,\omega)[h]),e_i\rangle_{_{M_2}}e_i\\
  &=(D\,^\eta x^s(t,\omega)[h],D\,^\eta x^s(t+\cdot,\omega)[h]).
 \end{align*}
This finishes the proof.
\end{proof}

\begin{lemma}\label{4thMoments}
 Let Hypotheses \textbf{\emph{(EU)}}, \textbf{\emph{(Flow)}} and \textbf{\emph{(H)}} be fulfilled. Then, for all $t\in[0,T]$, we have that $E[\lVert X^0_t(\eta)\rVert_{M_2}^4]<\infty$ and $E[\lVert DX^0_t(\eta)[h]\rVert_{M_2}^4]<\infty$ and the functions $t\mapsto E[\lVert X^0_t(\eta)\rVert_{M_2}^4]$ and $t\mapsto E[\lVert DX^0_t(\eta)[h]\rVert_{M_2}^4]$ are Lebesgue integrable, i.e.
 \begin{align}
  \int_0^TE[\lVert X^0_t(\eta)\rVert_{M_2}^4]dt&<\infty,\label{Integrability4thMoment}\\
  \int_0^TE[\lVert DX^0_t(\eta)[h]\rVert_{M_2}^4]dt&<\infty\label{Integrability4thMoment2}.
 \end{align}
\end{lemma}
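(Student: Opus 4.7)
The plan is to derive both fourth-moment bounds by a Burkholder--Davis--Gundy (BDG) plus Gronwall argument on $[0,T]$; once a uniform bound $\sup_{t\leq T}E[\|X^0_t(\eta)\|_{M_2}^4]<\infty$ is in hand, Lebesgue integrability in $t$ follows trivially. I will implicitly need $E[\|\eta\|_{M_2}^4]<\infty$ as the anchor of the recursion; this is automatic if $\eta\in M_2$ is deterministic and is otherwise a standing integrability assumption on the initial path.

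First I would start from the integral representation of $^\eta x$. Hypothesis \textbf{(H1)} gives global Lipschitz bounds on $f$ and $g$ in $\|\varphi\|_{M_2}$, hence linear growth. Applying Jensen to the drift integral and the order-$4$ BDG inequality to the martingale yields
$$E\!\left[\sup_{u\leq t}|^\eta x(u)|^4\right]\leq C_1\bigl(1+E[|\eta(0)|^4]\bigr) + C_2\!\int_0^t E\!\left[\|X^0_u(\eta)\|_{M_2}^4\right]du.$$
The segment norm splits as
$$\|X^0_u(\eta)\|_{M_2}^2 = |^\eta x(u)|^2 + \int_{u-r}^u |^\eta x(v)|^2\,dv \leq |^\eta x(u)|^2 + \|\eta\|_{M_2}^2 + T\sup_{0\leq v\leq u}|^\eta x(v)|^2,$$
using that $^\eta x=\eta$ on $[-r,0]$ and enlarging the integration interval to the full $[-r,T]$. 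Squaring and plugging in, the map $\phi(t):=E[\sup_{u\leq t}|^\eta x(u)|^4]$ satisfies an inequality of the Gronwall form $\phi(t)\leq C_{T,\eta}+\tilde C_T\int_0^t\phi(u)\,du$, giving $\sup_{t\leq T}\phi(t)<\infty$ and hence \eqref{Integrability4thMoment}.

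For the derivative I would invoke Lemma~\ref{DiffofSegmentequalsSegmentofDiff} to identify $DX^0_u(\eta)[h]$ with the pair $(D\,^\eta x(u)[h], D\,^\eta x(u+\cdot)[h])$, together with the linearised SDE
$$D\,^\eta x(t)[h]=h(0)+\int_0^t Df(u,X^0_u(\eta))[DX^0_u(\eta)[h]]\,du+\int_0^t Dg(u,X^0_u(\eta))[DX^0_u(\eta)[h]]\,dW(u),$$
together with the boundary condition $D\,^\eta x(t)[h]=h(t)$ on $[-r,0]$. The decisive point is that \textbf{(H1)} already forces $\|Df(u,\varphi)\|\leq L_f$ and $\|Dg(u,\varphi)\|\leq L_g$ uniformly in $(u,\varphi)$, so the integrands are dominated by constants times $\|DX^0_u(\eta)[h]\|_{M_2}$. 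Repeating the BDG and Jensen steps together with the analogous segment splitting for $\psi(t):=E[\sup_{u\leq t}|D\,^\eta x(u)[h]|^4]$ produces $\psi(t)\leq C_T(1+\|h\|_{M_2}^4)+\tilde C_T\int_0^t\psi(u)\,du$, and Gronwall again gives $\sup_{t\leq T}\psi(t)<\infty$, establishing \eqref{Integrability4thMoment2}.

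The only genuine technical nuisance is bookkeeping at the seam $u=0$: when $u<r$, the past window $[u-r,u]$ straddles time $0$ and must be split into an initial-path contribution (absorbed once and for all into $\|\eta\|_{M_2}^2$ or $\|h\|_{M_2}^2$) and a dynamic contribution (a supremum over $[0,u]$), so that the right-hand side of the Gronwall inequality depends only on past values of $\phi$ or $\psi$. Aside from this the argument is a routine fourth-moment variant of the standard existence-and-uniqueness estimate for SFDEs.
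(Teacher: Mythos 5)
Your proof is correct and reaches the same core reduction as the paper --- integral representation of $^\eta x$, Jensen and Burkholder--Davis--Gundy, and the splitting of the segment norm into an initial-path contribution plus a $\sup_{[0,t]}$ contribution --- but it closes the estimates by a genuinely different route in two places. For \eqref{Integrability4thMoment} you apply Gr\"onwall to $\phi(t)=E[\sup_{u\le t}|x(u)|^4]$, whereas the paper derives the algebraic inequality $(1-T^2k_1^2)E[\sup_{t\le T}|x(t)|^4]\le k_2$, which is conclusive only for $T<1/k_1$, and then iterates over a finite partition of $[0,T]$ using the semiflow property. Your route is shorter and avoids the case distinction; both versions share the same unstated technicality that the self-referential term can only be absorbed (or Gr\"onwall applied) once one knows a priori that the quantity is finite, which is standardly repaired by localizing with stopping times $\tau_n=\inf\{t:|x(t)|\ge n\}$ and passing to the limit by Fatou. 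For \eqref{Integrability4thMoment2} the paper forms the coupled process $y=(x,Dx[h])$, verifies that the coefficients $\hat f,\hat g$ of the combined system satisfy \textbf{(EU)} via \textbf{(H)}, and reruns the first argument on the $2d$-dimensional SFDE; you instead work directly with the linearised equation and observe that \textbf{(H1)} already gives $\|Df(t,\varphi)\|\le L_f$ and $\|Dg(t,\varphi)\|\le L_g$ uniformly, so the estimate closes in $\psi(t)=E[\sup_{u\le t}|D\,^\eta x(u)[h]|^4]$ alone, with no reference to $\phi$ --- a genuine simplification (at the cost of invoking the linearised SFDE directly rather than deriving it as part of the coupled system). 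Finally, you are right to flag $E[\|\eta\|_{M_2}^4]<\infty$ as a needed anchor: the paper's estimate \eqref{4thMomentsHilfsab1} silently treats $\|\eta\|_{M_2}$ as deterministic even though $\eta\in\mathbb{M}_2$ only guarantees second moments, so your caveat (deterministic $\eta$, or a standing fourth-moment assumption on the initial path) is exactly the right one. The factor $T$ in your bound on $\int_{u-r}^u|x(v)|^2\,dv$ should be $r$, but this is an immaterial constant.
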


\begin{proof}
 To see this, observe that
 \begin{align*}
  \lVert X^0_s(\eta)\rVert_{M_2}^4&=\Big(|x(s)|^2+\int_{-r}^0 1_{(-\infty,0)}(s+u)|\eta(s+u)|^2du+\int_{-r}^0 1_{[0,\infty)}(s+u)|x(s+u)|^2du\Big)^2\\
  &\leq 3\sup_{t\in[0,T]}|x(t)|^4+3\lVert\eta\rVert_{M_2}^4+3r^2\sup_{t\in[0,T]}|x(t)|^4,
 \end{align*}
 and thus, for all $s\in[0,T]$
 \begin{align}
  E[\lVert X^0_s(\eta)\rVert_{M_2}^4]\leq 3\lVert\eta\rVert_{M_2}^4+3(1+r^2)E[\sup_{t\in[0,T]}|x(t)|^4],\label{4thMomentsHilfsab1}
 \end{align}
 and
 \begin{align}
  \int_0^TE[\lVert X^0_t(\eta)\rVert_{M_2}^4]dt\leq 3T\lVert\eta\rVert_{M_2}^4+3(1+r^2)TE[\sup_{t\in[0,T]}|x(t)|^4]\label{4thMomentsHilfsab2}.
 \end{align}
 To prove \eqref{Integrability4thMoment} it is then enough to show $E[\sup_{t\in[0,T]}|x(t)|^4]<\infty$. Therefore, consider first
 \begin{align*}
  &E[\sup_{t\in[0,T]}|x(t)|^4]\\
  &\quad=E\bigg[\sup_{t\in[0,T]}\Big|\eta(0)+\int_0^tf(s,X^0_s(\eta))ds+\int_0^tg(s,X^0_s(\eta))dW(s)\Big|^4\bigg]\\
  &\quad\leq E\bigg[\sup_{t\in[0,T]}\Big(3\lVert\eta\rVert_{M_2}^2+3\Big(\int_0^tf(s,X^0_s(\eta))ds\Big)^2+3\Big(\int_0^tg(s,X^0_s(\eta))dW(s)\Big)^2\Big)^2\bigg]\\
  &\quad\leq 27\lVert\eta\rVert_{M_2}^4+27T\int_0^TE[|f(s,X^0_s(\eta))|^4]ds+27K_{BDG}E\bigg[\Big(\int_0^T|g(s,X^0_s(\eta))|^2ds\Big)^{\frac{4}{2}}\bigg].
 \end{align*}
 Here we applied twice the fact that $(\sum_{i=1}^na_i)^2\leq n\sum_{i=1}^n|a_i|^2$ as well as Jensen's inequality, Fubini's theorem. Since the process $\int_0^\cdot g(s,X^0_s(\eta))dW(s)$ is a martingale (as a consequence of Theorem \ref{ExistenceAndUniquenessSFDE}), we have also used the Burkholder-Davis-Gundy inequality (with the constant $K_{BDG}$).

 By the linear growth condition \textbf{(EU2)} on $f$ and $g$ and \eqref{4thMomentsHilfsab1}, we have
 \begin{align*}
  |f(s,X^0_s(\eta))|^4&\leq(C(1+\lVert X^0_s(\eta)\rVert_{M_2}))^4\leq8C^4+8C^4\lVert X^0_s(\eta)\rVert_{M_2}^4\\
  &\leq8C^4+24C^4\lVert \eta\rVert_{M_2}^4+24(1+r^2)\sup_{t\in[0,T]}|x(t)|^4,
 \end{align*}
 and the same applies to $|g(s,X^0_s(\eta))|^4$.
 Plugging this in the above estimates, we obtain
 \begin{align*}
  E[\sup_{t\in[0,T]}|x(t)|^4]&\leq 27\lVert\eta\rVert_{M_2}^4(1+24C^4T^2(1+K_{BDG}))+216C^4T^2(1+K_{BDG})\\
  &\quad+648(1+r^2)C^4T^2(1+K_{BDG})E[\sup_{t\in[0,T]}|x(t)|^4],
 \end{align*}
 which is
 \begin{align*}
  (1-T^2k_1^2)E[\sup_{t\in[0,T]}|x(t)|^4]\leq k_2,
 \end{align*}
 where
 \begin{align*}
  k_1&:=\sqrt{648(1+r^2)C^4(1+K_{BDG})}\text{ and}\\
  k_2&:=27\lVert\eta\rVert_{M_2}^4(1+24C^4T^2(1+K_{BDG}))+216C^4T^2(1+K_{BDG}).
 \end{align*}
 Then we distinguish two cases.
 
 \noindent\textbf{Case 1:} $T<\frac{1}{k_1}$. Then $E[\sup_{t\in[0,T]}|x(t)|^4]\leq\frac{k_2}{(1-T^2k_1^2)}$
 Hence, by \eqref{4thMomentsHilfsab1} and \eqref{4thMomentsHilfsab2} we have that \eqref{Integrability4thMoment} holds.

 \noindent\textbf{Case 2:} $T\geq\frac{1}{k_1}$. In this case, choose $0<T_1<T_2<\dots<T_n=T$ for some finite $n$ such that
 \begin{align*}
  T_1&<\frac{1}{k_1}\text{ and }\,T_i-T_{i-1}<\frac{1}{k_1},\quad i=2,\dots,n.
 \end{align*}
 By the semiflow property, we have $X^{T_1}_{T_2}(X^0_{T_1}(\eta))=X^0_{T_2}(\eta)$, so we can solve the SFDE on $[0,T_1]$, and by Case 1 we have
 \begin{align*}
  E[\sup_{t\in[0,T_1]}|x(t)|^4]<\infty\text{ and }\int_0^{T_1}E[\lVert X^0_t(\eta)\rVert_{M_2}^4]dt<\infty.
 \end{align*}
 Then, we use $X^0_{T_1}(\eta)$ as a new starting value and solve the equation on $[T_1,T_2]$. By the same steps as before, we obtain
 \begin{align*}
  E[\sup_{t\in[T_1,T_2]}|x(t)|^4]&\leq\frac{27E[\lVert X^0_{T_1}(\eta)\rVert_{M_2}^4](1+24(T_2-T_1)^2(1+K_{BDG})C^4)+216C^4(T_2-T_1)^2(1+K_{BDG})}{1-648(1+r^2)(T_2-T_1)^2(1+K_{BDG})C^4}<\infty,
 \end{align*}
 and therefore,
 \begin{align*}
  \int_0^{T_2}E[\lVert X^0_t(\eta)\rVert_{M_2}^4]dt&=\int_0^{T_1}E[\lVert X^0_t(\eta)\rVert_{M_2}^4]dt+\int_{T_1}^{T_2}E[\lVert X^0_t(\eta)\rVert_{M_2}^4]dt\\
  &\leq\int_0^{T_1}E[\lVert X^0_t(\eta)\rVert_{M_2}^4]dt+3(T_2-T_1)E[\lVert X^0_{T_1}(\eta)\rVert_{M_2}^4]\\
  &\quad+3(T_2-T_1)(1+r^2)E[\sup_{t\in[T_1,T_2]}|x(t)|^4]<\infty.
 \end{align*}
 Iterating the argument, we conclude that for all $T\in(0,\infty)$, $E[\sup_{t\in[0,T]}|x(t)|^4]<\infty$ and $\int_0^TE[\lVert X^0_t(\eta)\rVert_{M_2}^4]dt<\infty$, that is \eqref{Integrability4thMoment} holds.\\
 \par
 In order to prove \eqref{Integrability4thMoment2}, we define the process
 \begin{align*}
  y(t):=\begin{pmatrix}
         x(t)\\
         Dx(t)[h]
        \end{pmatrix},\,t\in[-r,T]
 \end{align*}
 and the corresponding short-hand notation
 \begin{align*}
  \mathcal{Y}(t,\eta,h)=(X^0_t(\eta),DX^0_t(\eta)[h])\in\M_2\times\M_2
 \end{align*} 
 The process $y$ satisfies the SFDE
 \begin{align}\label{CombinedProcess}
  y(t)&=\begin{pmatrix}
         \eta(0)\\
         h(0)
        \end{pmatrix}
        +\int_0^t\hat f(s,\mathcal{Y}(s,\eta,h))ds+\int_0^t\hat g(s,\mathcal{Y}(s,\eta,h))dW(s),\quad y_0=(\eta,h)
 \end{align}
 where, for $(\varphi,\psi)^\top\in M_2\times M_2$,
 \begin{align*}
  \hat f(s,(\varphi,\psi)):=\begin{pmatrix}
         f(s,\varphi)\\
         Df(s,\varphi)[\psi]
        \end{pmatrix},\quad
        \hat g(s,(\varphi,\psi)):=\begin{pmatrix}
         g(s,\varphi)\\
         Dg(s,\varphi)[\psi]
        \end{pmatrix}.
 \end{align*}

\noindent Thanks to Lemma \ref{DiffofSegmentequalsSegmentofDiff}, we recognize equation \eqref{CombinedProcess} as being of type \eqref{delayeq0}. In fact, we can identify the $M_2\times M_2$-valued random variable $(X^0_s(\eta),DX^0_s(\eta)[h])$ with the $M_2([-r,0],\R^{2d})$-valued random variable $(y(s),y(s+\cdot))$. Using \textbf{(H)} it is now easy to check that $\hat f$ and $\hat g$ fulfil Hypothesis \textbf{(EU)}, which are sufficient for the existence and uniqueness of a solution.

We can therefore argue exactly as in the proof of \eqref{Integrability4thMoment} and obtain that
\begin{align*}
 E[\lVert \mathcal{Y}(t,\eta,h)\rVert_{M_2\times M_2}^4]<\infty\,\forall t\in[0,T]\text{ and }\int_0^TE[\lVert \mathcal{Y}(t,\eta,h)\rVert_{M_2\times M_2}^4]dt<\infty.
\end{align*}
Moreover, since
\begin{align*}
 \lVert \mathcal{Y}(t,\eta,h)\rVert_{M_2\times M_2}^4&=\bigg(|y(t)|_{\R^{2d}}^2+\int_{-r}^0|y(t+u)|_{\R^{2d}}^2\bigg)^2\\
 &=\bigg(|x(t)|_{\R^{d}}^2+|Dx(t)[h]|_{\R^{d}}^2+\int_{-r}^0|x(t+u)|_{\R^{d}}^2+|Dx(t+u)[h]|_{\R^{d}}^2\bigg)^2\\
 &=\big(\lVert X^0_t(\eta)\rVert_{M_2}^2+\lVert DX^0_t(\eta)[h]\rVert_{M_2}^2\big)^2\geq\lVert DX^0_t(\eta)[h]\rVert_{M_2}^4,
\end{align*}
we conclude that $E[\lVert DX^0_t(\eta)[h]\rVert_{M_2}^4]<\infty$ for all $t\in[0,T]$ and \eqref{Integrability4thMoment2} holds.
\end{proof}

Our aim in the study of \eqref{DirectionalDerivative} is to give a formula for $\partial_hp(\eta)$ that avoids differentiating the function $\Phi$. Our approach consists in randomizing the initial condition $\eta$ and in finding a relationship between the Fr\'echet derivative $DX^0_T(\eta)$ applied to a direction $h\in \M_2$ and the Malliavin derivative of the $X^0_T$ with the randomized starting condition.

\subsection{Randomization of the initial condition and the Malliavin derivative}

Following the approaches in, e.g. \cite{Nualart} or \cite{Pronk}, we define an isonormal Gaussian process $\mathbb{B}$ on $L^2([-r,0],\R)$, independent of the $m$-dimensional Wiener process $W$ that drives the SFDE \eqref{delayeq0}. Without loss of generality, we can assume that $W$ and $\mathbb{B}$ are defined on indepentent probability spaces $(\Omega^W,\F^W,P^W)$ and $(\Omega^{\mathbb{B}},\F^{\mathbb{B}}, P^{\mathbb{B}})$ and that $(\Omega,\F,P)=(\Omega^W\times\Omega^{\mathbb{B}},\F^W\otimes\F^{\mathbb{B}}, P^W\otimes P^{\mathbb{B}})$.
From now on we shall work under $\Omega=\Omega^W\times\Omega^{\mathbb{B}}$. Hence, we correspondingly transfer the notation introduced so far to this case. However, we shall deal with the Malliavin and Skorohod calculus only w.r.t. $\mathbb{B}$. In fact, for the isonormal Gaussian process $\mathbb{B}$ we define the Malliavin derivative operator $\DM$ and the Skorohod integral operator $\delta$ as performed in e.g. \cite{Nualart} or \cite{Pronk}.
\par
For immediate use, we give the link between the Malliavin derivative of a segment and the segment of Malliavin derivatives.
\begin{lemma}\label{MalliavinofSegmentequalsSegmentofMalliavin}
 If $X^0_t(\eta)=(\,^\eta x(t),\,^\eta x_t)\in\M_2$ is Malliavin differentiable for all $t\geq0$, then, for all $s\geq0$, $\DM_s\,^\eta x_t=\{\DM_s\,^\eta x(t+u),\,u\in[-r,0]\}$ and $\DM_s X^0_t(\eta)=(\DM_s\,^\eta x(t),\DM_s\,^\eta x(t+\cdot))\in \M_2$.
\end{lemma}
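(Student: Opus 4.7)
The plan is to mimic the argument of Lemma \ref{DiffofSegmentequalsSegmentofDiff}, replacing the Fréchet derivative $D$ by the Malliavin derivative $\DM_s$. Both are linear, closable operators acting on Hilbert-space valued random variables, so the mechanism used there---expansion in an orthonormal basis of $M_2$, termwise differentiation of the scalar coefficients, and recollection---carries over essentially verbatim.

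First I would fix an orthonormal basis $\{e_i\}_{i\in\N}$ of $M_2$ and expand
\[
 X^0_t(\eta) = \sum_{i=0}^\infty \langle X^0_t(\eta), e_i\rangle_{M_2}\, e_i
 = \sum_{i=0}^\infty \Big( {}^\eta x(t)\, e_i(0) + \int_{-r}^0 {}^\eta x(t+u)\, e_i(u)\,du\Big)\, e_i.
\]
Since by assumption $X^0_t(\eta)$ is Malliavin differentiable as an $M_2$-valued random variable, each scalar coefficient $\langle X^0_t(\eta), e_i\rangle_{M_2}$ is Malliavin differentiable and the $M_2$-valued derivative is obtained coefficientwise,
\[
 \DM_s X^0_t(\eta) = \sum_{i=0}^\infty \DM_s \langle X^0_t(\eta), e_i\rangle_{M_2}\, e_i,
\]
in the Hilbert-space formulation of the Malliavin derivative of, e.g., \cite{Nualart, Pronk}.

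Next, inside each scalar coefficient I would commute $\DM_s$ with the deterministic Lebesgue integral over $[-r,0]$ to obtain
\[
 \DM_s \langle X^0_t(\eta), e_i\rangle_{M_2}
 = \DM_s\,{}^\eta x(t)\, e_i(0) + \int_{-r}^0 \DM_s\,{}^\eta x(t+u)\, e_i(u)\,du
 = \langle (\DM_s\,{}^\eta x(t), \DM_s\,{}^\eta x(t+\cdot)), e_i\rangle_{M_2}.
\]
Resumming over $i$ then yields $\DM_s X^0_t(\eta) = (\DM_s\,{}^\eta x(t), \DM_s\,{}^\eta x(t+\cdot))$ in $M_2$, and the identity for $\DM_s\,{}^\eta x_t$ follows by reading off the second component.

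The main obstacle is the Fubini-type commutation in the previous display. In Lemma \ref{DiffofSegmentequalsSegmentofDiff} the Fréchet derivative passes through the Lebesgue integral for free, because integration against $e_i$ is a bounded linear functional on $M_2$; for $\DM_s$ one must instead invoke closability together with uniform Malliavin-Sobolev bounds on the family $\{{}^\eta x(t+u)\}_{u\in[-r,0]}$. Under the standing hypothesis that $X^0_t(\eta)$ is Malliavin differentiable with values in $M_2$ for every $t\geq 0$, combined with the fourth-moment integrability provided by Lemma \ref{4thMoments}, such a Fubini argument for $\DM_s$ is standard.
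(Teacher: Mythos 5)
Your proposal is correct and follows exactly the route the paper intends: the paper's proof of this lemma is literally the single sentence that it ``follows the same lines as the proof of Lemma \ref{DiffofSegmentequalsSegmentofDiff}'', i.e.\ orthonormal-basis expansion in $M_2$, termwise differentiation of the scalar coefficients, and resummation. Your extra remark on justifying the commutation of $\DM_s$ with the Lebesgue integral via closability is a sensible point that the paper leaves implicit.
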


\begin{proof}
 The proof follows the same lines as the proof of Lemma \ref{DiffofSegmentequalsSegmentofDiff}.
\end{proof}

Here below we discuss the chain rule for the Malliavin derivative in $\M_2$. This leads to the study of the interplay between Malliavin derivatives and Fr\'echet derivatives.

We recall that, if $DX^0_T$ is bounded, i.e. for all $\omega=(\omega^W,\omega^\mathbb{B})\in\Omega$, $\sup_{\eta\in\M_2}\lVert DX^0_T(\eta(\omega),\omega^W)\rVert<\infty$, the chain rule in \cite[Proposition 3.8]{Pronk} gives
\begin{align*}
 \DM_sX^0_T(\eta(\omega^W,\omega^\mathbb{B}),\omega^W)=DX^0_T(\eta(\omega^W,\omega^\mathbb{B}),\omega^W)[\DM_s\eta(\omega^W,\omega^\mathbb{B})],
\end{align*}
as the Malliavin derivative only acts on $\omega^\mathbb{B}$. We need an analoguous result also in the case when $DX^0_T$ is possibly unbounded. To show this, we apply $\DM_s$ directly to the dynamics given by equation \eqref{delayeq0}.

\begin{theorem}\label{GeneralizedChainRule}
 Let $X^0_\cdot(\eta)\in L^2(\Omega;M_2([-r,T],\R^d))$ be the solution of \eqref{delayeq0}. Let Hypotheses \textbf{\emph{(EU)}}, \textbf{\emph{(Flow)}} and \textbf{\emph{(H)}} be fulfilled. Then we have
 \begin{align}
 \DM_sX^0_T(\eta)=DX^0_T(\eta)[\DM_s\eta]\quad(\omega,s)-a.e.
\end{align}
\end{theorem}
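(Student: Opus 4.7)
The plan is to extend the bounded-case chain rule from \cite{Pronk} to the unbounded setting by applying $\DM_s$ directly to the integral form of the SFDE and identifying the resulting equation with the variational (linearised) SFDE solved by the Fr\'echet derivative. Since $\mathbb{B}$ lives on $\Omega^{\mathbb{B}}$ while $W$ lives on $\Omega^W$, we have $\DM_s W(u) = 0$. Hence, formally applying $\DM_s$ to
\begin{align*}
^\eta x(t) = \eta(0) + \int_0^t f(u, X^0_u(\eta))\, du + \int_0^t g(u, X^0_u(\eta))\, dW(u), \quad t\in[0,T],
\end{align*}
and commuting $\DM_s$ with the Bochner and Skorohod integrals (to be justified below) together with the chain rule for the Fr\'echet-differentiable coefficients $f$ and $g$, yields
\begin{align*}
\DM_s{}^\eta x(t) &= \DM_s \eta(0) + \int_0^t Df(u, X^0_u(\eta))[\DM_s X^0_u(\eta)]\, du \\
&\quad + \int_0^t Dg(u, X^0_u(\eta))[\DM_s X^0_u(\eta)]\, dW(u),
\end{align*}
with $\DM_s{}^\eta x(t) = \DM_s \eta(t)$ for $t\in[-r,0]$. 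Invoking Lemma \ref{MalliavinofSegmentequalsSegmentofMalliavin}, this means that for each fixed $s \in [-r,0]$ the process $u \mapsto \DM_s X^0_u(\eta)$ solves a linear SFDE of type \eqref{delayeq2} with initial segment $\DM_s \eta \in \M_2$ and coefficients $(u,\varphi) \mapsto Df(u, X^0_u(\eta))[\varphi]$ and $(u,\varphi) \mapsto Dg(u, X^0_u(\eta))[\varphi]$.

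On the other hand, differentiating \eqref{delayeq0} in $\eta$ along a direction $h \in \M_2$ and invoking Lemma \ref{DiffofSegmentequalsSegmentofDiff}, the Fr\'echet derivative process $u \mapsto DX^0_u(\eta)[h]$ satisfies the very same linear SFDE but with initial segment $h$. Specialising to $h := \DM_s \eta$ identifies the two candidates, provided the linearised SFDE admits a unique $L^2$-solution. This is ensured by Hypothesis \textbf{(H)}: indeed, \textbf{(H2)} gives that $Df(u,\cdot)$ and $Dg(u,\cdot)$ are uniformly bounded linear operators on the range of $X^0_u(\eta)$ together with appropriate Lipschitzianity, so the linearised coefficients satisfy Hypotheses \textbf{(EU)} with integrability controlled by the $L^4$-estimates of Lemma \ref{4thMoments}, and Theorem \ref{ExistenceAndUniquenessSFDE} applies to give uniqueness.

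The main obstacle is the rigorous justification of the commutation of $\DM_s$ with the It\^o integral and of the chain rule for $f$ and $g$ in the absence of a global bound on $DX^0_\cdot(\eta)$. The natural route is a Picard iteration argument in the Malliavin--Sobolev space $\mathbb{D}^{1,2}(\M_2)$: define the iterates $x^{(0)}(t) := \eta(0)$ for $t\in[0,T]$, $x^{(0)}(t) := \eta(t)$ for $t\in[-r,0]$, and
\begin{align*}
x^{(n+1)}(t) = \eta(0) + \int_0^t f\big(u, X^{(n),0}_u(\eta)\big)\, du + \int_0^t g\big(u, X^{(n),0}_u(\eta)\big)\, dW(u),
\end{align*}
and prove inductively that each $X^{(n),0}_t(\eta) \in \mathbb{D}^{1,2}$ with the identity $\DM_s X^{(n),0}_t(\eta) = DX^{(n),0}_t(\eta)[\DM_s \eta]$, using that at each step the coefficients are evaluated at a process already known to be Malliavin differentiable with $L^2$-controlled derivative. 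The fourth-moment bounds of Lemma \ref{4thMoments}, applied both to the iterates and to their Fr\'echet derivatives, supply the integrability needed to control $Df(u, X^{(n),0}_u(\eta))[\DM_s X^{(n),0}_u(\eta)]$ and $Dg(u, X^{(n),0}_u(\eta))[\DM_s X^{(n),0}_u(\eta)]$ in $L^2(\Omega)$. Passing to the limit in the standard Picard contraction estimate then gives $X^{(n),0}_T(\eta) \to X^0_T(\eta)$ in $\mathbb{D}^{1,2}$, and closability of $\DM$ transfers the chain-rule identity from the iterates to the limit, concluding the proof.
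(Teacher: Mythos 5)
Your proposal is correct and follows essentially the same route as the paper: apply $\DM_s$ to the integral equation, recognise that $\DM_s X^0_\cdot(\eta)$ and $DX^0_\cdot(\eta)[\DM_s\eta]$ solve the same SFDE with the same initial segment $\DM_s\eta$, and conclude by $L^2$-uniqueness. The one point to patch is your appeal to Theorem \ref{ExistenceAndUniquenessSFDE} for the linearised equation, whose coefficients $(u,\varphi)\mapsto Df(u,X^0_u(\eta))[\varphi]$ are random through $X^0_u(\eta)$ and so do not literally fall under Hypotheses \textbf{(EU)}; the paper avoids this by coupling each candidate with $x$ itself into the autonomous system with coefficients $\hat f,\hat g$ from the proof of Lemma \ref{4thMoments}, to which the deterministic-coefficient uniqueness theorem applies verbatim.
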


\begin{proof}
 To show this, we apply $\DM_s$ directly to the dynamics given by equation \eqref{delayeq0}. Doing this, we get, by definition of the operator $\rho_0$ and Lemma \ref{MalliavinofSegmentequalsSegmentofMalliavin}, for a.e. $\omega\in\Omega$

\begin{align}\label{MalliavinSDE}
\begin{split}
 \rho_0(\DM_sX^0_T(\eta))=\DM_s\ ^{\eta} x(t)=\begin{cases}\DM_s\eta(0) + \int_0^t Df(u,X^0_u(\eta))[\DM_sX^0_u(\eta)]du \\
+ \int_0^t Dg(u, X^0_u(\eta))[\DM_sX^0_u(\eta)]dW(u), \quad t\in[0,T], \\
\DM_s\eta(t), \quad t\in[-r,0].
\end{cases}
\end{split}
\end{align}

\noindent Define the processes
\begin{align*}
 y(t):=\begin{pmatrix}
         ^{\eta} x(t)\\
         D\ ^{\eta} x(t)[\DM_s\eta]
        \end{pmatrix},\ z(t):=\begin{pmatrix}
        ^{\eta} x(t)\\
        \DM_s\ ^{\eta} x(t)
       \end{pmatrix}.
\end{align*}
From the proof of Lemma \ref{4thMoments} we know that $y$ satisfies the SFDE
\begin{align*}
\begin{cases}
 y(t)&=
\begin{pmatrix}
        \eta(0)\\
        \DM_s\eta(0)
       \end{pmatrix} + \int_0^t \hat f(u, y(u), y_u)du + \int_0^t \hat g(u, y(u), y_u)dW(u),\\
       y_0&=(\eta,\DM_s\eta),
\end{cases}
\end{align*}
with the functions $\hat f$ and $\hat g$ as in the proof of Lemma \ref{4thMoments}. Moreover, by \eqref{MalliavinSDE} and Lemma \ref{MalliavinofSegmentequalsSegmentofMalliavin}, it holds that $z$ satisfies the SFDE
\begin{align*}
\begin{cases}
 z(t)&=
\begin{pmatrix}
        \eta(0)\\
        \DM_s\eta(0)
       \end{pmatrix} + \int_0^t \hat f(u, z(u), z_u)du + \int_0^t \hat g(u, z(u), z_u)dW(u),\\
       z_0&=(\eta,\DM_s\eta).
\end{cases}
\end{align*}

\noindent Comparing those two SFDEs, it follows that $y=z$ in $L^2(\Omega, M_2([-r,T],\R^d))$. Therefore,
\begin{align*}
 E\bigg[\int_0^T\lVert y_t-z_t\rVert_{M_2}^2dt\bigg]&=E\bigg[\int_0^T|y(t)-z(t)|^2+\int_{-r}^0|y(t+u)-z(t+u)|^2dudt\bigg]\\
 &\leq(1+r)\lVert y-z\rVert_{L^2(\Omega, M_2([-r,T],\R^d))}=0,
\end{align*}
which implies that $\lVert y_t-z_t\rVert_{M_2}=0$ for a.e. $(\omega,t)\in\Omega\times[0,T]$.

\end{proof}

We now introduce the randomization of the initial condition. For this we consider an $\R$-valued functional $\xi$ of $\mathbb{B}$, non-zero $P$-a.s. In particular, $\xi$ is a random variable independent of $W$. Choose $\xi$ to be Malliavin differentiable w.r.t. $\mathbb{B}$ with $\DM_s\xi\neq0$ for almost all $(\omega,s)$. Furthermore, let $\eta,\,h\in\M_2$ be random variables on $\Omega^W$, i.e. $\eta(\omega)=\eta(\omega^W)$, $h(\omega)=h(\omega^W)$. We write $\eta,\,h\in\M_2(\Omega^W)$, where $\M_2(\Omega^W)$ denotes the space of random variables in $\M_2$ that only depend on $\omega^W\in\Omega^W$. Here $\eta$ plays the role of the "true" (i.e. not randomized) initial condition and $h$ plays the role of the direction in which we later are going to differentiate. For simpler notation, we define $\tilde\eta:=\eta-h$.

\begin{corollary}\label{GeneralizedChainRule1}
 Let $X^0_\cdot(\et+\lxm)\in L^2(\Omega;M_2([-r,T],\R^d))$ be the solution of \eqref{delayeq0} with initial condition $\et+\lxm\in\M_2$, where $\lambda\in\R$. Let Hypotheses \textbf{\emph{(EU)}}, \textbf{\emph{(Flow)}} and \textbf{\emph{(H)}} be fulfilled. Then we obtain
 \begin{align}
\begin{split}
 \DM_sX^0_T(\et(\omega^W)+\lambda\xi(\omega^\mathbb{B})h(\omega^W))=DX^0_T(\et(\omega^W)+\lambda\xi(\omega^\mathbb{B})h(\omega^W))[\lambda\DM_s\xi(\omega^\mathbb{B}) h(\omega^W)]
\end{split}
\end{align}
 $(\omega,s)$-a.e. In short hand notation:
\begin{align}
 \DM_sX^0_T(\et+\lxm)=DX^0_T(\et+\lxm)[\lambda\DM_s\xi h].
\end{align}
\end{corollary}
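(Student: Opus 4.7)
The plan is to apply Theorem \ref{GeneralizedChainRule} directly with the initial condition $\et + \lxm$, and then compute the Malliavin derivative of this initial condition by exploiting the product structure $\Omega = \Omega^W \times \Omega^\mathbb{B}$.

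First, I would check that $\et + \lxm \in \M_2$ so that Theorems \ref{ExistenceAndUniquenessSFDE} and \ref{GeneralizedChainRule} apply with this random element as the starting path. This is immediate from $\et, h \in \M_2(\Omega^W)$, the independence of $\xi$ from $W$, and the Malliavin differentiability (hence square-integrability) of $\xi$; the sum is $\F_0$-measurable after the natural enlargement of the filtration by $\F^\mathbb{B}$. Substituting $\eta = \et + \lxm$ in Theorem \ref{GeneralizedChainRule} yields
\begin{align*}
 \DM_s X^0_T(\et + \lxm) = DX^0_T(\et + \lxm)\bigl[\DM_s(\et + \lxm)\bigr] \quad (\omega,s)\text{-a.e.}
\end{align*}

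The second step is to identify $\DM_s(\et + \lxm)$. Since $\DM$ is the Malliavin derivative associated to the isonormal process $\mathbb{B}$, it acts only on the $\omega^\mathbb{B}$-component. As $\et$ and $h$ are by assumption functions of $\omega^W$ alone, one has $\DM_s \et = 0$ and $\DM_s h = 0$ in $M_2$. Applying the product rule for the scalar Malliavin derivative to $\xi\, h$, coordinatewise in a Hilbert basis of $M_2$ (as already exploited in Lemmas \ref{DiffofSegmentequalsSegmentofDiff} and \ref{MalliavinofSegmentequalsSegmentofMalliavin}), then gives
\begin{align*}
 \DM_s(\et + \lxm) = \lambda\,(\DM_s \xi)\, h,
\end{align*}
and inserting this in the previous display delivers the stated identity.

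The main technical point to justify rigorously is the product rule $\DM_s(\xi\, h) = (\DM_s \xi)\, h$ in the present $M_2$-valued setting, where $\xi$ is a Malliavin differentiable scalar functional of $\mathbb{B}$ and $h$ is an $M_2$-valued functional of $W$ only. The cleanest route is to exploit the tensor structure $L^2(\Omega) = L^2(\Omega^W) \otimes L^2(\Omega^\mathbb{B})$: conditional on $\omega^W$, the value $h(\omega^W)$ is a deterministic element of $M_2$, and the scalar Malliavin calculus on $\Omega^\mathbb{B}$ applied to each Fourier coefficient $\langle \xi\, h(\omega^W), e_i\rangle_{M_2} = \xi\,\langle h(\omega^W), e_i\rangle_{M_2}$ yields the product rule term by term, after which one reassembles via Lemma \ref{MalliavinofSegmentequalsSegmentofMalliavin}. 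Alternatively one approximates $h$ by simple $\M_2(\Omega^W)$-valued random variables of the form $\sum_k \varphi_k \mathbf{1}_{A_k}$ with $\varphi_k \in M_2$ and $A_k \in \F^W$, applies the scalar product rule on each piece, and passes to the limit using closability of $\DM$. Apart from this, the corollary is a direct substitution into Theorem \ref{GeneralizedChainRule}.
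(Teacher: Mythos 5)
Your proposal is correct and is essentially the argument the paper intends: the corollary is stated without proof precisely because it is the direct specialization of Theorem \ref{GeneralizedChainRule} to the initial condition $\et+\lxm$, combined with the observation that $\DM_s$ acts only on the $\omega^{\mathbb{B}}$-variable so that $\DM_s(\et+\lxm)=\lambda(\DM_s\xi)h$. Your additional care in justifying the product rule for the $M_2$-valued product $\xi h$ (via Fourier coefficients, as in Lemmas \ref{DiffofSegmentequalsSegmentofDiff} and \ref{MalliavinofSegmentequalsSegmentofMalliavin}) is a reasonable elaboration of a step the paper leaves implicit.
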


We are now giving a derivative free representation of the expectation of the Fr\'echet derivative of $\Phi\circ X^0_T$ at $\eta$ in direction $h$ in terms of a Skorohod integral. This representation will later be used to get a representation for the derivative of $p(\eta)$ in direction $h$.

\begin{theorem}\label{thm1}
Let Hypotheses \textbf{\emph{(EU)}}, \textbf{\emph{(Flow)}} and \textbf{\emph{(H)}} be satisfied and let $\Phi$ be Fr\'echet differentiable. 
Furthermore, 
let $a\in L^2([-r,0],\R)$ be such that $\int_{-r}^0a(s)ds=1$. If $a(\cdot)\xi/\DM_\cdot\xi$ is Skorohod integrable and if the Skorohod integral below and its evaluation at $\lambda=\frac{1}{\xi}\in\R$ are well defined then following relation holds
\begin{align}\label{SkorohodRep}
 E[D(\Phi\circ X^0_T)(\eta)[h]]=-E\left[\left\{\delta\Big(\Phi(X^0_T(\et+\lxm))a(\cdot)\frac{\xi}{\DM_\cdot\xi}\Big)\right\}\Big|_{\lambda=\frac{1}{\xi}}\right].
\end{align}
\end{theorem}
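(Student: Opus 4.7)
The plan is to combine Corollary~\ref{GeneralizedChainRule1} with the Skorohod integration-by-parts identity $\delta(Fu) = F\,\delta(u) - \int_{-r}^{0}\DM_s F\cdot u(s)\,ds$ on the isonormal Gaussian process $\mathbb{B}$. The guiding idea is that although the randomised initial path $\et + \lambda\xi h$ reduces to the ``true'' initial condition $\eta$ exactly at $\lambda = 1/\xi$, for \emph{deterministic} $\lambda$ the Malliavin derivative $\DM_s$ still acts non-trivially through $\xi$ and, via the Fréchet chain rule for $\Phi$, produces the desired directional derivative of $\Phi\circ X^0_T$ in the direction $h$. The factor $\xi$ in the weight $a(\cdot)\xi/\DM_\cdot\xi$ together with the normalisation $\int_{-r}^0 a(s)\,ds=1$ is precisely what makes all $\DM_s\xi$ factors cancel after integration in $s$ and leaves a scalar identity.

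Concretely, I would fix a deterministic $\lambda\in\R$ and write
\[
F_\lambda := \Phi(X^0_T(\et + \lambda\xi h)), \qquad v(s) := \frac{a(s)\,\xi}{\DM_s\xi}.
\]
By the Fréchet chain rule applied to $\Phi$ combined with Corollary~\ref{GeneralizedChainRule1},
\[
\DM_s F_\lambda \;=\; D\Phi(X^0_T(\et+\lambda\xi h))\bigl[\DM_s X^0_T(\et+\lambda\xi h)\bigr] \;=\; \lambda\,\DM_s\xi\cdot D(\Phi\circ X^0_T)(\et+\lambda\xi h)[h],
\]
so that the $\DM_s\xi$ factors cancel in $\int_{-r}^0 \DM_s F_\lambda\cdot v(s)\,ds$ and, by the normalisation of $a$,
\[
\int_{-r}^{0}\DM_s F_\lambda \cdot v(s)\,ds \;=\; \lambda\xi\cdot D(\Phi\circ X^0_T)(\et+\lambda\xi h)[h].
\]
Applying the Skorohod IBP to $\delta(F_\lambda v)$ and rearranging then gives, for every deterministic $\lambda$,
\[
\lambda\xi \cdot D(\Phi\circ X^0_T)(\et+\lambda\xi h)[h] \;=\; F_\lambda\,\delta(v) \;-\; \delta(F_\lambda v).
\]
Evaluating pathwise at $\lambda = 1/\xi$ (so $\lambda\xi = 1$ and $\et+\lambda\xi h = \eta$) collapses the left-hand side and yields the pointwise identity
\[
D(\Phi\circ X^0_T)(\eta)[h] \;=\; \Phi(X^0_T(\eta))\,\delta(v) \;-\; \bigl\{\delta(F_\lambda v)\bigr\}\Big|_{\lambda=1/\xi}.
\]

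Taking expectations then concludes: the first term vanishes because, by Skorohod duality in $\mathbb{B}$, $E[\Phi(X^0_T(\eta))\,\delta(v)] = E\bigl[\int_{-r}^0\DM_s\Phi(X^0_T(\eta))\cdot v(s)\,ds\bigr]$, and since $\eta,h\in\M_2(\Omega^W)$, the random variable $\Phi(X^0_T(\eta))$ is $\F^W$-measurable and therefore has vanishing Malliavin derivative with respect to $\mathbb{B}$ thanks to the product construction $(\Omega,\F,P) = (\Omega^W\times\Omega^\mathbb{B},\F^W\otimes\F^\mathbb{B},P^W\otimes P^\mathbb{B})$. The main technical obstacle is to justify the Skorohod IBP with the specific integrand $F_\lambda\,v$ and to make sense of the pathwise random substitution $\lambda = 1/\xi$, which does not a priori commute with $\delta$; this is exactly what the theorem's integrability hypotheses on $a(\cdot)\xi/\DM_\cdot\xi$ and on the well-definedness of the Skorohod integral at $\lambda = 1/\xi$ are designed to supply. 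The moment bounds of Lemma~\ref{4thMoments} together with Corollary~\ref{GeneralizedChainRule1} then secure the regularity needed to apply the Malliavin--Fréchet chain rule along the way.
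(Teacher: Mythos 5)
Your proposal is correct and follows essentially the same route as the paper: the generalized chain rule of Corollary \ref{GeneralizedChainRule1}, cancellation of $\DM_s\xi$ via the weight $a(\cdot)\xi/\DM_\cdot\xi$ and the normalisation of $a$, Skorohod integration by parts, and evaluation at $\lambda=1/\xi$. The only (immaterial) difference is at the last step, where you kill the term $E[\Phi(X^0_T(\eta))\,\delta(v)]$ by Skorohod duality and the vanishing of $\DM\Phi(X^0_T(\eta))$, while the paper invokes the independence of the $\F^W$-measurable factor $\Phi(X^0_T(\eta))$ and the $\F^{\mathbb{B}}$-measurable factor $\delta(v)$ together with the zero mean of the Skorohod integral.
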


\begin{proof}

First of all we can see that, by Theorem \ref{GeneralizedChainRule}, we have the relation
\begin{align*}
 \DM_sX^0_T(\et+\lxm)=DX^0_T(\et+\lxm)[\lambda\DM_s\xi h]\quad (\omega,s)-a.e.
\end{align*}

\noindent Multiplication with $\frac{\xi}{\DM_s\xi}$ yields
\begin{align}\label{TechnicalMalliavinFrechetEq}
 \frac{\xi}{\DM_s\xi}\DM_sX^0_T(\et+\lxm)=DX^0_T(\et+\lxm)[ h]\lambda\xi\quad (\omega,s)-a.e.
\end{align}
For the above, we recall that $\DM_s\xi\neq0$ a.e. Since the right-hand side in \eqref{TechnicalMalliavinFrechetEq} is defined $\omega$-wise, the evaluation at $\lambda=\frac{1}{\xi}$ yields $DX^0_T(\et+h)[h]$. Summarising, we have
\begin{align*}
 \Big\{\frac{\xi}{\DM_s\xi}\DM_sX^0_T(\et+\lxm)\Big\}\Big|_{\lambda=\frac{1}{\xi}}=DX^0_T(\et+\lxm)[h]\lambda\xi\Big|_{\lambda=\frac{1}{\xi}}=DX^0_T(\et+h)[h]=DX^0_T(\eta)[h]
\end{align*}
Multiplying with $1=\int_{-r}^0a(s)ds$ and applying the chain rule, together with the fact that $D\Phi(X^0_T(\eta))$ is defined pathwise, we obtain
\begin{align*}
 E[D(\Phi\circ X^0_T)(\eta)[h]]&=E\left[ D\Phi(X_T^0(\eta))DX^0_T(\eta)[h] \right]= E\left[ \int_{-r}^0D\Phi(X_T^0(\eta))DX^0_T(\eta)[h]a(s)ds \right]\\
 &= E\left[ \Big\{\int_{-r}^0D\Phi(X_T^0(\et+\lxm))\DM_sX^0_T(\et+\lxm)a(s)\frac{\xi}{\DM_s\xi}ds\Big\}\Big|_{\lambda=\frac{1}{\xi}} \right]\\
 &= E\left[ \Big\{\int_{-r}^0\DM_s\{\Phi(X_T^0(\et+\lxm))\}a(s)\frac{\xi}{\DM_s\xi}ds\Big\}\Big|_{\lambda=\frac{1}{\xi}} \right].
\end{align*}
The partial integration formula for the Skorohod integral yields
\begin{align*}
 E[D(\Phi\circ X^0_T)(\eta)[h]]&= E\left[ \Big\{\Phi(X_T^0(\et+\lxm))\delta\Big(a(\cdot)\frac{\xi}{\DM\xi}\Big)-\delta\Big(\Phi(X_T^0(\et+\lxm))a(\cdot)\frac{\xi}{\DM\xi}\Big)\Big\}\Big|_{\lambda=\frac{1}{\xi}} \right]\\
 &= E\left[ \Phi(X_T^0(\eta))\delta\Big(a(\cdot)\frac{\xi}{\DM\xi}\Big)-\Big\{\delta\Big(\Phi(X_T^0(\et+\lxm))a(\cdot)\frac{\xi}{\DM\xi}\Big)\Big\}\Big|_{\lambda=\frac{1}{\xi}} \right].
\end{align*}
The result follows now by independence of $\Phi(X_T^0(\eta))$, which is $\F^W$-measurable, and $\delta\Big(a(\cdot)\frac{\xi}{\DM\xi}\Big)$, which is $\F^{\mathbb{B}}$-measurable.
\end{proof}

\begin{remark}
As for a numerically tractable approximation of the stochastic integral in the above formula we refer to \cite[Section 3.1]{Nualart}.
\end{remark}

\begin{proposition}\label{ExistenceSkorohodEval}
 Define $u(s,\lambda):=\Phi(X^0_T(\et+\lxm))a(s)\frac{\xi}{\DM_s\xi}$, $s\in[-r,0]$, $\lambda\in\R$. Assume that the Skorohod integral $\delta(u(\cdot,\lambda))$ exists for all $\lambda\in\R$. If for all $\Lambda>0$ there exists a $C>0$ such that for all $\lambda_1,\lambda_2\in\overline{\supp\xi^{-1}}$, $|\lambda_1|,|\lambda_2|<\Lambda$:
\begin{align*}
 \|u(\cdot,\lambda_1)-u(\cdot,\lambda_2)\|^2_{L^2(\Omega\times[-r,0])}+\|\DM(u(\cdot,\lambda_1)-u(\cdot,\lambda_2))\|^2_{L^2(\Omega\times[-r,0]^2)}<C|\lambda_1-\lambda_2|^2,
\end{align*}
then the evaluation $\delta(u(\cdot,\lambda))|_{\lambda=\frac{1}{\xi}}$ is well defined.
\end{proposition}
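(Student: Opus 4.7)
The plan is to pass the Lipschitz-in-$\lambda$ bound on $u$ through the divergence operator to obtain a Lipschitz-in-$\lambda$ bound on $\delta(u(\cdot,\lambda))$ in $L^2(\Omega)$, and then apply Kolmogorov's continuity theorem to get a continuous modification in $\lambda$ that can be composed with the random evaluation point $\lambda=1/\xi$.

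First, I would invoke the standard $L^2$-continuity estimate for the divergence (see e.g.\ \cite{Nualart}): whenever $v$ and $\DM v$ lie in $L^2(\Omega\times[-r,0])$ and $L^2(\Omega\times[-r,0]^2)$ respectively,
\begin{align*}
 E[|\delta(v)|^2]\leq \|v\|^2_{L^2(\Omega\times[-r,0])}+\|\DM v\|^2_{L^2(\Omega\times[-r,0]^2)}.
\end{align*}
Applying this to $v=u(\cdot,\lambda_1)-u(\cdot,\lambda_2)$ and combining with the hypothesis of the proposition gives, for every $\Lambda>0$ and every $\lambda_1,\lambda_2\in\overline{\supp\xi^{-1}}$ with $|\lambda_1|,|\lambda_2|<\Lambda$,
\begin{align*}
 E\bigl[|\delta(u(\cdot,\lambda_1))-\delta(u(\cdot,\lambda_2))|^2\bigr]\leq C|\lambda_1-\lambda_2|^2.
\end{align*}

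Second, with this quadratic increment bound at hand (i.e.\ the Kolmogorov exponent $1+\varepsilon$ is attained with $\varepsilon=1$), I would apply Kolmogorov's continuity criterion to the one-parameter family $\{\delta(u(\cdot,\lambda))\}_{\lambda\in[-\Lambda,\Lambda]\cap\overline{\supp\xi^{-1}}}$ to obtain a modification $Z_\Lambda(\lambda,\omega)$ which, for $P$-a.e.\ $\omega$, is H\"older continuous of any order $\alpha<1/2$ in $\lambda$. Taking $\Lambda\in\N$ and using that two such modifications agree on overlapping parameter ranges off a $P$-null set, I would glue them into a single jointly measurable $Z(\lambda,\omega)$, continuous in $\lambda\in\overline{\supp\xi^{-1}}$ for almost every $\omega$.

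Finally, since $\xi\neq0$ $P$-a.s., the random variable $1/\xi$ is finite almost surely and, by the very definition of $\overline{\supp\xi^{-1}}$, takes values in this closed set with probability one. Hence $Z(1/\xi(\omega),\omega)$ is a well-defined $\F$-measurable random variable, and it provides the meaning of $\{\delta(u(\cdot,\lambda))\}|_{\lambda=1/\xi}$ used in Theorem \ref{thm1}. The main obstacle is precisely the measurability issue just addressed: for each fixed $\lambda$, $\delta(u(\cdot,\lambda))$ is only an equivalence class in $L^2(\Omega)$, so the exceptional null set depends on $\lambda$ and naive substitution of a random $\lambda$ is ill-defined. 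The Kolmogorov modification replaces this $\lambda$-indexed family of equivalence classes by a genuine jointly measurable process on which pointwise, and a fortiori random, evaluation makes sense.
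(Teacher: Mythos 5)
Your proposal is correct and follows essentially the same route as the paper: the $L^2(\Omega)$ bound $E[|\delta(v)|^2]\leq\|v\|^2_{L^2(\Omega\times[-r,0])}+\|\DM v\|^2_{L^2(\Omega\times[-r,0]^2)}$ applied (by linearity of $\delta$) to $v=u(\cdot,\lambda_1)-u(\cdot,\lambda_2)$, followed by Kolmogorov's continuity criterion to produce a continuous version in $\lambda$ on $\overline{\supp\xi^{-1}}$, which is then evaluated at the a.s.\ finite random point $1/\xi$. Your write-up is in fact slightly more careful than the paper's on the localisation/gluing over $\Lambda\in\N$ and on why a continuous modification is needed at all (the $\lambda$-dependent null sets), but the underlying argument is identical.
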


\begin{proof}
The Skorohod integral $\delta(u(\cdot,\lambda))$ is an element of $L^2(\Omega, \R)$. From
\begin{align*}
 \|\delta(u(\cdot,\lambda))\|^2_{L^2(\Omega, \R)}\leq\|u(\cdot,\lambda)\|^2_{L^2(\Omega\times[-r,0], \R)}+\|\DM u(\cdot,\lambda)\|^2_{L^2(\Omega\times[-r,0],\R)}
\end{align*}
(see \cite[eq. (1.47) Proof of Prop. 1.3.1]{Nualart}), under the assumptions above and by means of Kolmogorov's continuity theorem, we can see that the process
\begin{align*}
 Z:\Omega\times\overline{\supp\xi^{-1}}\rightarrow L^2(\Omega, \R),\text{ }\lambda\mapsto\delta(u(\cdot,\lambda))
\end{align*}
has a continuous version. Applying this continuous version, the evaluation at the random variable $\frac{1}{\xi}$ is well defined:
\begin{align*}
 \delta(u(\cdot,\lambda))(\omega)|_{\lambda=\frac{1}{\xi}}:=Z(\omega,\lambda)|_{\lambda=\frac{1}{\xi}}:=Z(\omega,\frac{1}{\xi}(\omega)).
\end{align*}
Hence we conclude.
\end{proof}

\vspace{5mm}
\subsection{Representation formula for Delta under a suitable choice of the randomization}

A particularly interesting choice of randomization is $\xi=\exp(\mathbb{B}(1_{[-r,0]}))$, since in this case, $\DM_s\xi=\xi$ for all $s\in[-r,0]$ and

\begin{align}\label{SkorohodEstimation}
 &\|\delta(u(\cdot,\lambda_1))-\delta(u(\cdot,\lambda_2))\|^2_{L^2(\Omega)}\nonumber\\
 &\hspace{1cm}\leq\|a\|^2_{L^2([-r,0])}(\|\Phi(X^0_T(\et+\lIxm))-\Phi(X^0_T(\et+\lIIxm))\|^2_{L^2(\Omega)}\\
 &\hspace{1cm}\quad+\|\DM\{\Phi(X^0_T(\et+\lIxm))-\Phi(X^0_T(\et+\lIIxm))\}\|^2_{L^2(\Omega\times[-r,0])}).\nonumber
\end{align}

\noindent In this setup, let the following hypotheses be fulfilled:\\
\par
\noindent\textbf{Hypotheses (A):} $\Phi$ is globally Lipschitz with Lipschitz constant $L_\Phi$ and $C^1$. The Fr\'echet derivative $D\Phi$ is globally Lipschitz with Lipschitz constant $L_{D\Phi}$.\\
\par
A more general payoff function $\Phi$ will be considered in the next subsection.
Recall that $p(\eta)=E[\Phi(X^0_T(\eta))]$ and the sensitivity to the initial path, the Delta, in direction $h\in M_2$ is $\partial_hp(\eta):=\frac{d}{d\varepsilon} p(\eta + \varepsilon h)|_{\varepsilon=0}$.

\begin{lemma}\label{InterchangingExpectationAndDifferentiation0}
 Under Hypotheses \textbf{\emph{(EU)}}, \textbf{\emph{(Flow)}}, \textbf{\emph{(H)}} and \textbf{\emph{(A)}}, we have
 \begin{align*}
  \partial_hp(\eta)=E[D(\Phi\circ X^0_T)(\eta)[h]].
 \end{align*}
\end{lemma}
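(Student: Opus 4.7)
The plan is to pass the directional derivative inside the expectation via Vitali's convergence theorem, by establishing pointwise convergence of the difference quotients together with their uniform $L^2$-boundedness.

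First I would exploit the $C^{1,1}$-semiflow property: for each $\omega\in\Omega$, the map $\varepsilon\mapsto \Phi(X^0_T(\eta+\varepsilon h,\omega))$ is $C^1$, being the composition of the $C^{1,1}$-semiflow $X^0_T(\cdot,\omega)\colon M_2\to M_2$ granted by Hypotheses \textbf{(EU)}, \textbf{(Flow)}, \textbf{(H)} with the $C^1$ payoff $\Phi$ from \textbf{(A)}. The Banach-space chain rule together with the fundamental theorem of calculus then gives
\begin{align*}
 \frac{\Phi(X^0_T(\eta+\varepsilon h))-\Phi(X^0_T(\eta))}{\varepsilon}=\int_0^1 D\Phi(X^0_T(\eta+\varepsilon t h))\bigl[DX^0_T(\eta+\varepsilon t h)[h]\bigr]\,dt,
\end{align*}
whose integrand converges, as $\varepsilon\to 0$, to $D(\Phi\circ X^0_T)(\eta)[h]$ pointwise in $\omega$.

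The second and most delicate step is an $\eta$-uniform $L^2$ estimate for the derivative process. Fréchet differentiation of \eqref{delayeq0} in the initial path produces the linear SFDE
\begin{align*}
 D\,^\eta x(t)[h]=h(0)+\int_0^t Df(u,X^0_u(\eta))[DX^0_u(\eta)[h]]\,du+\int_0^t Dg(u,X^0_u(\eta))[DX^0_u(\eta)[h]]\,dW(u),
\end{align*}
whose integrands are dominated pathwise by $L_f\|DX^0_u(\eta)[h]\|_{M_2}$ and $L_g\|DX^0_u(\eta)[h]\|_{M_2}$, by the global Lipschitzianity \textbf{(H1)}. A Burkholder--Davis--Gundy plus Gronwall argument, formally identical to the one used for $\mathcal{Y}(t,\eta,h)$ in the proof of Lemma \ref{4thMoments}, would then deliver a bound
\begin{align*}
 E\bigl[\|DX^0_T(\eta')[h]\|^2_{M_2}\bigr]\le C\|h\|^2_{M_2},\qquad \eta'\in\M_2,
\end{align*}
with $C=C(L_f,L_g,T,r)$ independent of the initial condition $\eta'$, since the latter enters the derivative SFDE only through $h(0)$.

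Combining this with the bound $\|D\Phi\|\le L_\Phi$ from \textbf{(A)}, Jensen's inequality and Fubini's theorem show that the difference quotients are uniformly $L^2(\Omega)$-bounded in $\varepsilon$:
\begin{align*}
 E\!\left[\left|\frac{\Phi(X^0_T(\eta+\varepsilon h))-\Phi(X^0_T(\eta))}{\varepsilon}\right|^2\right]\le L_\Phi^2 \int_0^1 E\bigl[\|DX^0_T(\eta+\varepsilon th)[h]\|^2_{M_2}\bigr]\,dt\le L_\Phi^2 C\|h\|^2_{M_2}.
\end{align*}
Uniform $L^2$-boundedness yields uniform integrability, so Vitali's convergence theorem, applied together with the pointwise convergence of the first step, gives
\begin{align*}
 \partial_hp(\eta)=\lim_{\varepsilon\to 0}E\!\left[\frac{\Phi(X^0_T(\eta+\varepsilon h))-\Phi(X^0_T(\eta))}{\varepsilon}\right]=E\bigl[D(\Phi\circ X^0_T)(\eta)[h]\bigr],
\end{align*}
which is the desired identity. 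The main obstacle is the $\eta$-uniform moment bound on $DX^0_T(\eta)[h]$: although conceptually a routine Gronwall estimate, one must verify that the constants are genuinely independent of the initial path, mirroring (and slightly strengthening) the argument already carried out in Lemma \ref{4thMoments}.
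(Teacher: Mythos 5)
Your argument is correct, and it follows the same global strategy as the paper (pointwise a.s. convergence of the difference quotients plus a uniform $L^2(\Omega)$ bound, hence uniform integrability, hence Vitali), but the way you obtain the $L^2$ bound is genuinely different. The paper never invokes the derivative flow along the segment between $\eta$ and $\eta+\varepsilon h$: it dominates the difference quotient by $g_\varepsilon:=L_\Phi\,\varepsilon^{-1}\lVert X^0_T(\eta+\varepsilon h)-X^0_T(\eta)\rVert_{M_2}$ using only the global Lipschitzianity of $\Phi$, runs a Burkholder--Davis--Gundy plus Gr\"onwall estimate directly on the SFDE satisfied by the difference $X^0_\cdot(\eta+\varepsilon h)-X^0_\cdot(\eta)$ (whose increments are controlled by $L_f$, $L_g$ via \textbf{(H1)}), and then concludes via Vitali applied to $g_\varepsilon$ together with Pratt's lemma. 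You instead write the difference quotient in mean-value form, $\int_0^1 D\Phi(X^0_T(\eta+\varepsilon t h))[DX^0_T(\eta+\varepsilon t h)[h]]\,dt$, and reduce everything to the estimate $E[\lVert DX^0_T(\eta')[h]\rVert_{M_2}^2]\le C\lVert h\rVert_{M_2}^2$ uniformly in $\eta'$, which indeed holds with $C=C(L_f,L_g,T,r)$ because the linearized SFDE has operator-norm coefficients bounded by $L_f$ and $L_g$ independently of the base path; the initial condition enters only through $h$. Both routes are valid under the stated hypotheses. Your version uses the $C^1$ regularity of $\Phi$ from \textbf{(A)} more essentially (in the fundamental-theorem-of-calculus step) and applies Vitali directly to the difference quotients, which is slightly more streamlined; the paper's version is more economical in that the domination step needs only the Lipschitz constant $L_\Phi$ and a single Gr\"onwall estimate on the difference process, at the cost of the extra appeal to Pratt's lemma to transfer $L^1$-convergence from the dominating sequence to $f_\varepsilon$. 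One small point worth making explicit in your write-up: passing the $\varepsilon\to0$ limit inside the $dt$-integral (to identify the pointwise limit of the mean-value representation) needs a pathwise domination, which is available since for fixed $\omega$ the map $\varphi\mapsto DX^0_T(\varphi,\omega)$ is continuous and hence bounded on the compact segment $\{\eta+\varepsilon t h\}$; alternatively one can bypass this entirely, as the pointwise convergence of the difference quotients already follows from the $\omega$-wise Fr\'echet differentiability of $\Phi\circ X^0_T$, so the integral representation is needed only for the moment bound.
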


\begin{proof}
 By definition of the directional derivative, we have
 \begin{align*}
  \partial_hp(\eta)=\lim_{\varepsilon\rightarrow0}E\Big[\frac{1}{\varepsilon}(\Phi(X^0_T(\eta+\varepsilon h))-\Phi(X^0_T(\eta)))\Big]=\lim_{\varepsilon\rightarrow0}E[f_\varepsilon],
 \end{align*}
 where $f_\varepsilon(\omega)=\frac{1}{\varepsilon}(\Phi(X^0_T(\eta+\varepsilon h,\omega))-\Phi(X^0_T(\eta,\omega)))\rightarrow D(\Phi\circ X^0_T(\omega))(\eta)[h]$ a.s. since the Fr\'echet derivative of $\Phi\circ X^0_T$ in $\eta$ is defined for $\omega$-a.e. Moreover,
 \begin{align*}
  |f_\varepsilon(\omega)|&=\frac{|\Phi(X^0_T(\eta+\varepsilon h,\omega))-\Phi(X^0_T(\eta,\omega))|}{\varepsilon}\leq L_\Phi\frac{\lVert X^0_T(\eta+\varepsilon h,\omega)-X^0_T(\eta,\omega)\rVert_{M_2}}{\varepsilon}=:g_\varepsilon(\omega).
 \end{align*}
 So if we can find $g\in L^1(\Omega,P)$ s.t. $g_\varepsilon\rightarrow g$ in $L^1$-convergence as $\varepsilon\rightarrow0$, we would have that $f_\varepsilon\rightarrow D(\Phi\circ X^0_T)(\eta)[h]$ in $L^1$-convergence by Pratt's lemma (see \cite[Theorem 1]{Pratt}). This would conclude the proof.\\
 \par
 Observe that, by the continuity of the norm $\lVert\cdot\rVert_{M_2}$ and the $\omega$-wise Fr\'echet differentiability of $X^0_T$ in $\eta$, we have that
 \begin{align*}
  g_\varepsilon(\omega)\rightarrow L_\Phi\lVert DX^0_T(\eta,\omega)[h]\rVert_{M_2},\text{ }\omega\text{-a.e.}
 \end{align*}
 Let $g(\omega):=L_\Phi\lVert DX^0_T(\eta,\omega)[h]\rVert_{M_2}$. By Lemma \ref{4thMoments}, $g\in L^1(\Omega, \R)$. We apply Vitali's theorem (see \cite[Theorem 16.6]{Schilling}) to show that the convergence $g_\varepsilon\rightarrow g$ holds in $L^1$. This means that we have to prove that the family $\{g_\varepsilon\}_{\varepsilon\in(-\delta,\delta)}$ for some $\delta>0$ is uniformly integrable. To show that, we will proceed in two steps:
 \begin{enumerate}
  \item[(1)] Prove that $\lVert g_\varepsilon\rVert_{L^2(\Omega)}<K$ for some constant $K$ not depending on $\varepsilon$.
  \item[(2)] Show that this implies  that $\{g_\varepsilon\}_{\varepsilon\in(-\delta,\delta)}$ is uniformly integrable.
 \end{enumerate}
 \textit{Step (1):} By Lemma \ref{4thMoments}, it holds that for each fixed $\varepsilon\in(-\delta,\delta)\backslash\{0\}$, the function $s\mapsto E[(\frac{1}{\varepsilon}\lVert X^0_s(\eta+\varepsilon h,\omega)-X^0_s(\eta,\omega)\rVert_{M_2})^2]$ is integrable un $[0,T]$. Now, making use of Jensen's inequality, Fubini's theorem and the Burkholder-Davis-Gundy inequality,
 
 \begin{align*}
  &E\bigg[\Big(\frac{1}{\varepsilon}\lVert X^0_T(\eta+\varepsilon h)-X^0_T(\eta)\rVert_{M_2}\Big)^2\bigg]\\
  &\quad=E\bigg[\frac{1}{\varepsilon^2}\Big(\big|\varepsilon h(0)+\int_0^Tf(s,X^0_s(\eta+\varepsilon h))-f(s,X^0_s(\eta))ds\\
  &\quad\quad\quad+\int_0^Tg(s,X^0_s(\eta+\varepsilon h))-g(s,X^0_s(\eta))dW(s)\big|^2\\
  &\quad\quad+\int_{-r}^01_{(-\infty,0)}(T+u)|\varepsilon h(u)|^2du\\
  &\quad\quad+\int_{-r}^01_{[0,\infty)}(T+u)\big|\varepsilon h(0)+\int_0^{T+u}f(s,X^0_s(\eta+\varepsilon h))-f(s,X^0_s(\eta))ds\\
  &\quad\quad\quad+\int_0^{T+u}g(s,X^0_s(\eta+\varepsilon h))-g(s,X^0_s(\eta))dW(s)\big|^2du\Big)\bigg]\\
  &\quad\leq3|h(0)|^2+\frac{3T}{\varepsilon^2}\int_0^TE[|f(s,X^0_s(\eta+\varepsilon h))-f(s,X^0_s(\eta))|^2]ds\\
  &\quad\quad+\frac{3}{\varepsilon^2}\int_0^TE[|g(s,X^0_s(\eta+\varepsilon h))-g(s,X^0_s(\eta))|^2]ds+\int_{-r}^0|h(u)|^2du+3r|h(0)|^2\\
  &\quad\quad+\frac{3}{\varepsilon^2}\int_{-r}^01_{[0,\infty)}(T+u)\int_0^{T+u}(T+u)E[|f(s,X^0_s(\eta+\varepsilon h))-f(s,X^0_s(\eta))|^2]dsdu\\
  &\quad\quad+\frac{3}{\varepsilon^2}\int_{-r}^01_{[0,\infty)}(T+u)\int_0^{T+u}E[|g(s,X^0_s(\eta+\varepsilon h))-g(s,X^0_s(\eta))|^2]dsdu\\
  &\quad\leq3(1+r)\lVert h\rVert_{M_2}^2+(3+r)T\int_0^T\frac{1}{\varepsilon^2}E[|f(s,X^0_s(\eta+\varepsilon h))-f(s,X^0_s(\eta))|^2]ds\\
  &\quad\quad+(3+r)\int_0^T\frac{1}{\varepsilon^2}E[|g(s,X^0_s(\eta+\varepsilon h))-g(s,X^0_s(\eta))|^2]ds\\
  &\quad\leq3(1+r)\lVert h\rVert_{M_2}^2+(3+r)(L_g^2+TL_f^2)\int_0^TE\bigg[\Big(\frac{1}{\varepsilon}\lVert X^0_s(\eta+\varepsilon h)-X^0_s(\eta)\rVert_{M_2}\Big)^2\bigg]ds.
 \end{align*}
 It follows from Gr\"{o}nwall's inequality that
 \begin{align*}
  \lVert g_\varepsilon\rVert_{L^2(\Omega)}^2&=L_\Phi^2E\bigg[\Big(\frac{1}{\varepsilon}\lVert X^0_T(\eta+\varepsilon h)-X^0_T(\eta)\rVert_{M_2}\Big)^2\bigg]\leq3L_\Phi^2(1+r)\lVert h\rVert_{M_2}^2e^{(3+r)(TL_g^2+T^2L_f^2)}=:K^2.
 \end{align*}

 \textit{Step (2):} Fix $\delta>0$. Then, by H\"{o}lder's inequality and Markov's inequality
 \begin{align*}
  \lim_{M\rightarrow\infty}\sup_{|\varepsilon|<\delta}E[|g_\varepsilon|1_{\{|g_\varepsilon|>M\}}]&\leq\lim_{M\rightarrow\infty}\sup_{|\varepsilon|<\delta}\lVert g_\varepsilon\rVert_{L^2(\Omega)}\sqrt{P(|g_\varepsilon|>M)}\\
  &\leq\lim_{M\rightarrow\infty}\sup_{|\varepsilon|<\delta}\frac{\lVert g_\varepsilon\rVert_{L^2(\Omega)}^2}{M}\leq\lim_{M\rightarrow\infty}\frac{K^2}{M}=0,
 \end{align*}
 i.e. the family $\{g_\varepsilon\}_{\varepsilon\in(-\delta,\delta)}$ is uniformly integrable.
 
\end{proof}

With this result, we can give a derivative free representation formula for the directional derivatives of $p(\eta)$.

\begin{theorem}\label{thm1special}
 Let Hypotheses \textbf{\emph{(EU)}}, \textbf{\emph{(Flow)}}, \textbf{\emph{(H)}} and \textbf{\emph{(A)}} be fulfilled. Let $a\in L^2([-r,0],\R)$ be such that $\int_{-r}^0a(s)ds=1$ and let $\xi=\exp(\mathbb{B}(1_{[-r,0]}))$. Then the directional derivatives of $p$ have representation
\begin{align}\label{SkorohodRepSpecial}
 \partial_hp(\eta)=-E\left[\left\{\delta\Big(\Phi(X^0_T(\et+\lxm))a(\cdot)\Big)\right\}\Big|_{\lambda=\frac{1}{\xi}}\right].
\end{align}
\end{theorem}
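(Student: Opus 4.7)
The plan is to specialise Theorem~\ref{thm1} to $\xi=\exp(\mathbb{B}(1_{[-r,0]}))$, whose only role is to eliminate the Malliavin weight $\xi/\DM_\cdot\xi$, and then invoke Lemma~\ref{InterchangingExpectationAndDifferentiation0} to convert $E[D(\Phi\circ X^0_T)(\eta)[h]]$ into $\partial_h p(\eta)$. First I would verify the basic Malliavin calculation: since $\mathbb{B}(1_{[-r,0]})$ is an isonormal Gaussian process evaluated at the indicator $1_{[-r,0]}$, the chain rule for $\DM$ yields $\DM_s\xi=\xi\,1_{[-r,0]}(s)$, so that $\DM_s\xi=\xi>0$ almost surely for $s\in[-r,0]$ and therefore $\xi/\DM_s\xi\equiv 1$. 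With this in hand the integrand in Theorem~\ref{thm1} becomes $u(s,\lambda)=\Phi(X^0_T(\et+\lxm))a(s)$, which is exactly the one appearing in \eqref{SkorohodRepSpecial}.

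Next I have to verify the hypotheses of Theorem~\ref{thm1} for this particular $\xi$. Skorohod integrability of $u(\cdot,\lambda)$ for each fixed $\lambda$ follows because $a$ is deterministic and $L^2$ and because $\Phi\circ X^0_T(\et+\lxm)$ belongs to the Malliavin--Sobolev domain thanks to Hypothesis~\textbf{(A)}, Corollary~\ref{GeneralizedChainRule1} and the $\mathcal{C}^{1,1}$ semiflow. The non-trivial point is to justify the evaluation of $\delta(u(\cdot,\lambda))$ at $\lambda=1/\xi$, and for this I would apply Proposition~\ref{ExistenceSkorohodEval}, which in the present setting reduces to the estimate \eqref{SkorohodEstimation}. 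The first term in \eqref{SkorohodEstimation} is handled by the Lipschitz property of $\Phi$ in \textbf{(A)} combined with a Grönwall argument applied to $\|X^0_T(\et+\lIxm)-X^0_T(\et+\lIIxm)\|_{L^2(\Omega;M_2)}^2$; this argument is entirely parallel to Step~(1) in the proof of Lemma~\ref{InterchangingExpectationAndDifferentiation0}, with $\varepsilon h$ replaced by $(\lambda_1-\lambda_2)\xi h$ and the extra factor $\xi$ absorbed using $\xi\in L^p(\Omega)$ for every $p<\infty$ together with $|\lambda_1|,|\lambda_2|<\Lambda$. For the second term I would first expand $\DM_s\{\Phi(X^0_T(\et+\lxm))\}=D\Phi(X^0_T(\et+\lxm))[DX^0_T(\et+\lxm)[\lambda\xi h]]$ via the chain rule and Corollary~\ref{GeneralizedChainRule1}, then control the difference between $\lambda_1$ and $\lambda_2$ by a telescoping decomposition.

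Once the hypotheses are verified, Theorem~\ref{thm1} gives
\begin{align*}
E[D(\Phi\circ X^0_T)(\eta)[h]]
=-E\left[\left\{\delta\bigl(\Phi(X^0_T(\et+\lxm))a(\cdot)\bigr)\right\}\Big|_{\lambda=1/\xi}\right],
\end{align*}
and Lemma~\ref{InterchangingExpectationAndDifferentiation0}, whose hypotheses are exactly the same as those of Theorem~\ref{thm1special}, identifies the left-hand side with $\partial_h p(\eta)$, yielding \eqref{SkorohodRepSpecial}.

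The hard part is the estimate for the second term in \eqref{SkorohodEstimation}: after the telescoping $D\Phi(X^0_T(\et+\lIxm))\cdot DX^0_T(\et+\lIxm)[\lambda_1\xi h]-D\Phi(X^0_T(\et+\lIIxm))\cdot DX^0_T(\et+\lIIxm)[\lambda_2\xi h]$ one must bound three contributions: (i) $(\lambda_1-\lambda_2)\xi$ times a uniformly $L^2$-bounded factor, (ii) $\|D\Phi(X^0_T(\et+\lIxm))-D\Phi(X^0_T(\et+\lIIxm))\|$ times a factor involving $DX^0_T[\xi h]$, controlled via the Lipschitz constant $L_{D\Phi}$ from \textbf{(A)} and the Lipschitz continuity of $\varphi\mapsto X^0_T(\varphi)$, and (iii) $\|DX^0_T(\et+\lIxm)-DX^0_T(\et+\lIIxm)\|$, controlled via the $\mathcal{C}^{1,1}$ regularity of the flow under \textbf{(H)}. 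Each contribution must finally be balanced using Hölder's inequality together with the fourth-moment integrability $E[\|X^0_T\|_{M_2}^4]<\infty$ and $E[\|DX^0_T[h]\|_{M_2}^4]<\infty$ from Lemma~\ref{4thMoments}, and the moment bounds for $\xi$; this is where all the regularity hypotheses \textbf{(H)} and \textbf{(A)} are used simultaneously.
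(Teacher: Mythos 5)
Your proposal is correct and follows essentially the same route as the paper: reduce to Theorem \ref{thm1} with $\DM_s\xi=\xi$ so the weight $\xi/\DM_\cdot\xi$ disappears, identify $E[D(\Phi\circ X^0_T)(\eta)[h]]$ with $\partial_h p(\eta)$ via Lemma \ref{InterchangingExpectationAndDifferentiation0}, and justify the evaluation at $\lambda=1/\xi$ through Proposition \ref{ExistenceSkorohodEval} and the estimate \eqref{SkorohodEstimation}, bounding the two terms by the Lipschitz properties of $\Phi$ and $D\Phi$ together with Gr\"onwall/fourth-moment bounds (which the paper packages as Lemma \ref{technicalLemma1}) and an operator telescoping identical in substance to the paper's $A_1x_1-A_2x_2=(A_1-A_2)x_1+A_2(x_1-x_2)$.
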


To prove the theorem, we need the following lemma:

\begin{lemma}\label{technicalLemma1}
 Assume \textbf{\emph{(H)}} and \textbf{\emph{(A)}} and $\xi=\exp(\mathbb{B}(1_{[-r,0]}))$. For any $\Lambda>0$ there exists a $C>0$ such that, for all $|\lambda_1|,|\lambda_2|<\Lambda$, we have
 \begin{enumerate}
  \item [(i)] $E[\|X^0_T(\et+\lIxm)-X^0_T(\et+\lIIxm)\|^4_{M_2}]^{\frac{1}{2}}\leq C|\lambda_1-\lambda_2|^2$
  \item [(ii)] $E[\|DX^0_T(\et+\lIxm)[\lIxm]\|^4_{M_2}]^{\frac{1}{2}}\leq C|\lambda_1|^2$
  \item [(iii)] $E[\|DX^0_T(\et+\lIxm)[\lIxm]-DX^0_T(\et+\lIIxm)[\lIIxm]\|^2_{M_2}]\leq C|\lambda_1-\lambda_2|^2.$
 \end{enumerate}
\end{lemma}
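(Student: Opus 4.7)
The plan is to reduce each of the three bounds to a moment estimate for an SFDE of the type \eqref{delayeq0}, obtained by the Burkholder--Davis--Gundy inequality together with Gr\"onwall, exactly as in the proof of Lemma \ref{4thMoments}, and then to handle the random coefficient $\xi$ by conditioning and independence. Two structural facts will be used throughout: the linearity of the Fr\'echet derivative in its direction, giving $DX^0_T(\vartheta)[\lambda\xi\mu]=\lambda\xi\,DX^0_T(\vartheta)[\mu]$; and the fact that $\xi=\exp(\mathbb{B}(1_{[-r,0]}))$ is log-normal, independent of $W$, and has finite moments of all orders.

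For (i), I would introduce the difference $\Delta_s:=X^0_s(\et+\lIxm)-X^0_s(\et+\lIIxm)\in M_2$, with initial segment $\Delta_0=(\lambda_1-\lambda_2)\xi\mu$. Its evaluation at $0$ satisfies an SDE whose drift and diffusion are $f(s,X^0_s(\et+\lIxm))-f(s,X^0_s(\et+\lIIxm))$ and $g(s,X^0_s(\et+\lIxm))-g(s,X^0_s(\et+\lIIxm))$, bounded by (H1) pointwise by $L_f\|\Delta_s\|_{M_2}$ and $L_g\|\Delta_s\|_{M_2}$ respectively. The iterative BDG--Gr\"onwall scheme of Lemma \ref{4thMoments} (subdividing $[0,T]$ into pieces shorter than the critical threshold $1/k_1$) then yields $E[\|\Delta_T\|_{M_2}^4]\leq K_T\,E[\|\Delta_0\|_{M_2}^4]=K_T|\lambda_1-\lambda_2|^4 E[\xi^4]\|\mu\|_{M_2}^4$, and finiteness of $E[\xi^4]$ closes (i).

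For (ii), I factor out the scalar: $DX^0_T(\et+\lIxm)[\lIxm]=\lambda_1\xi\,DX^0_T(\et+\lIxm)[\mu]$. Conditionally on $\xi$, the augmented process $(x,Dx[\mu])$ solves system \eqref{CombinedProcess} with Lipschitz coefficients $\hat f,\hat g$ provided by (H). Tracking constants in the proof of \eqref{Integrability4thMoment2} gives a polynomial-in-$\|\vartheta\|_{M_2}$ bound $E[\|DX^0_T(\vartheta)[\mu]\|_{M_2}^4]\leq C(1+\|\vartheta\|_{M_2}^4)\|\mu\|_{M_2}^4$. Plugging $\vartheta=\et+\lIxm$, taking expectation in $\xi$, and using $|\lambda_1|\leq\Lambda$ together with all moments of $\xi$ yields (ii).

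Estimate (iii) is the main obstacle and I would approach it through the decomposition
\begin{align*}
 DX^0_T(\et+\lIxm)[\lIxm]-DX^0_T(\et+\lIIxm)[\lIIxm]&=(\lambda_1-\lambda_2)\xi\,DX^0_T(\et+\lIxm)[\mu]\\
 &\quad+\lambda_2\xi\bigl(DX^0_T(\et+\lIxm)-DX^0_T(\et+\lIIxm)\bigr)[\mu].
\end{align*}
The first summand is $O(|\lambda_1-\lambda_2|)$ in $L^2$ directly from (ii). For the second, I would study $\Theta_t:=DX^0_t(\et+\lIxm)[\mu]-DX^0_t(\et+\lIIxm)[\mu]$, which solves an SDE whose coefficients split into a principal part $Df(s,X^0_s(\et+\lIxm))[\Theta_s]$ and $Dg(s,X^0_s(\et+\lIxm))[\Theta_s]$, plus remainder terms of the form
\begin{align*}
 \bigl(Df(s,X^0_s(\et+\lIxm))-Df(s,X^0_s(\et+\lIIxm))\bigr)\bigl[DX^0_s(\et+\lIIxm)[\mu]\bigr]
\end{align*}
and its $g$-analogue. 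Hypothesis (H2) bounds the remainder pointwise by $L_{Df}\,\|X^0_s(\et+\lIxm)-X^0_s(\et+\lIIxm)\|_{M_2}\,\|DX^0_s(\et+\lIIxm)[\mu]\|_{M_2}$; Cauchy--Schwarz combined with (i) for the first factor (at the fourth-moment level) and with the uniform-in-$|\lambda_2|\leq\Lambda$ version of (ii) for the second controls its $L^2$-norm by $C|\lambda_1-\lambda_2|^2$. Closing with Gr\"onwall on $E[\|\Theta_t\|_{M_2}^2]$ yields $E[\|\Theta_T\|_{M_2}^2]\leq C|\lambda_1-\lambda_2|^2$, and multiplying by $\lambda_2\xi$ and using moments of $\xi$ gives (iii). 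The hardest step is precisely this ``second-order'' one: it forces us to use (H2) on the derivatives $Df,Dg$ and to invoke (i)--(ii) at the level of fourth moments in order to apply Cauchy--Schwarz cleanly inside the Gr\"onwall argument.
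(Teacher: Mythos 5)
Your proposal is correct and follows essentially the same route as the paper: reduce each estimate to an SFDE for the difference (or derivative) process, apply Jensen/Burkholder--Davis--Gundy together with the Lipschitz hypotheses \textbf{(H1)}--\textbf{(H2)}, close with Gr\"onwall as in Lemma \ref{4thMoments}, and control the randomisation through the finite moments of the log-normal $\xi$. In fact, for parts (ii) and (iii) — which the paper dismisses with a one-line reference to ``the same considerations'' — your explicit linearity factorisation $DX^0_T(\vartheta)[\lambda\xi\mu]=\lambda\xi DX^0_T(\vartheta)[\mu]$, the $A_1x_1-A_2x_2$ decomposition, and the Cauchy--Schwarz step pairing (i) at fourth-moment level with the uniform version of (ii) supply precisely the details the paper omits.
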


\begin{proof}
 See Appendix.
\end{proof}

\begin{proof}[of Theorem \ref{thm1special}]
 By Lemma \ref{InterchangingExpectationAndDifferentiation0}, we know that we can interchange the directional derivative with the expectation. We shall prove that the Skorohod integral in \eqref{SkorohodRepSpecial} is well defined. For this we apply Proposition \ref{ExistenceSkorohodEval} and use \eqref{SkorohodEstimation}.
 
 Let $\lambda_1, \lambda_2\in\R$, $|\lambda_1|,|\lambda_2|<\Lambda$. Because of Hypotheses \textbf{(A)}, and by Lemma \ref{technicalLemma1}, we have that
 \begin{align*}
  \|\Phi(X^0_T(\et+\lIxm))-\Phi(X^0_T(\et+\lIIxm))\|^2_{L^2(\Omega)}&\leq L^2_\Phi E[\|X^0_T(\et+\lIxm)-X^0_T(\et+\lIIxm)\|^2_{M_2}]\\
  &\leq L_\Phi^2E[\|X^0_T(\et+\lIxm)-X^0_T(\et+\lIIxm)\|^4_{M_2}]^{\frac{1}{2}}\\
  &\leq L_\Phi^2C|\lambda_1-\lambda_2|^2.
 \end{align*}
 On the other hand, the chain rule for the Malliavin derivative, the property $\DM_s\xi=\xi$, the fact that for two linear operators $A_1$ and $A_2$ it holds $A_1x_1-A_2x_2=(A_1-A_2)x_1+A_2(x_1-x_2)$ together with the property $|a+b|^2\leq 2|a|^2+2|b|^2$ yield
 \begin{align*}
   &|\DM_s\{\Phi(X^0_T(\et+\lIxm))-\Phi(X^0_T(\et+\lIIxm))\}|^2\\
   &\leq2|(D\Phi(X^0_T(\et+\lIxm))-D\Phi(X^0_T(\et+\lIxm)))[DX^0_T(\et+\lIxm)[\lIxm]]|^2\\
   &\quad+2|D\Phi(X^0_T(\et+\lIIxm)[DX^0_T(\et+\lIxm)[\lIxm]-DX^0_T(\et+\lIIxm)[\lIIxm]]|^2\\
   &\leq2\|D\Phi(X^0_T(\et+\lIxm))-D\Phi(X^0_T(\et+\lIxm))\|^2\|[DX^0_T(\et+\lIxm)[\lIxm]]\|_{M_2}^2\\
   &\quad+2\|D\Phi(X^0_T(\et+\lIIxm)\|^2\|[DX^0_T(\et+\lIxm)[\lIxm]-DX^0_T(\et+\lIIxm)[\lIIxm]]\|_{M_2}^2\\
   &\leq2L^2_{D\Phi}\|X^0_T(\et+\lIxm)-X^0_T(\et+\lIxm)\|_{M_2}^2\|[DX^0_T(\et+\lIxm)[\lIxm]]\|_{M_2}^2\\
   &\quad+2L_\Phi^2\|[DX^0_T(\et+\lIxm)[\lIxm]-DX^0_T(\et+\lIIxm)[\lIIxm]]\|_{M_2}^2,
 \end{align*}
where we used Hypothesis \textbf{(A)} in the end. Taking expectations, applying H\"{o}lder's inequality and Lemma \ref{technicalLemma1} we finally get
\begin{align*}
 &\|\DM\{\Phi(X^0_T(\et+\lIxm))-\Phi(X^0_T(\et+\lIIxm))\}\|^2_{L^2(\Omega\times[-r,0])}\\
 &\leq2L^2_{D\Phi}E[\|X^0_T(\et+\lIxm)-X^0_T(\et+\lIxm)\|_{M_2}^4]^{\frac{1}{2}}E[\|[DX^0_T(\et+\lIxm)[\lIxm]]\|_{M_2}^4]^{\frac{1}{2}}\\
 &\quad+2L_\Phi^2E[\|[DX^0_T(\et+\lIxm)[\lIxm]-DX^0_T(\et+\lIIxm)[\lIIxm]]\|_{M_2}^2]\\
 &\leq2(L^2_{D\Phi}C^2|\lambda_1|^2+L_\Phi^2C)|\lambda_1-\lambda_2|^2\\
 &=\mathcal{O}(1)|\lambda_1-\lambda_2|^2.
\end{align*}
Hence, Proposition \ref{ExistenceSkorohodEval} guarantees the existence of the evaluation of the Skorohod integral in $\lambda=\frac{1}{\xi}$.
\end{proof}

\vspace{5mm}
\subsection{Generalization to a larger class of payoff functions}

Instead of Hypothesis \textbf{(A)}, assume now that the following holds:\\
\par

\noindent\textbf{Hypotheses (A'):} The payoff function $\Phi:M_2\rightarrow \R$ is convex, bounded from below and globally Lipschitz continuous with Lipschitz constant $L_\Phi$.\\
\par

\noindent Moreover, consider the \textit{Moreau-Yosida approximations} $\Phi_n:M_2\rightarrow\R$ given by

\begin{align}\label{Yosida}
 \Phi_n(x):=\inf_{y\in M_2}\Big(\Phi(y)+\frac{n}{2}\lVert x-y\rVert_{M_2}^2\Big).
\end{align}

\noindent The following lemma summarizes some well-known properties of the Moreau-Yosida approximations in our setup.

\begin{lemma}\label{PropertiesYosida}
 For $\Phi$ and $\Phi_n$ as above, the following holds
 \begin{enumerate}
 \item[(i)] $\Phi_n(x)=\Phi(J_n(x))+\frac{n}{2}\lVert x-J_n(x)\rVert_{M_2}^2$, $x\in M_2$, where $J_n$ is given by
 \begin{align*}
  n(x-J_n(x))\in\partial\Phi(J_n(x))\,\text{ or, equivalently }\,J_n=\left(\id+\frac{\partial\Phi}{n}\right)^{-1},
 \end{align*}
 where $\partial\Phi(x)$ denotes the subdifferential of $\Phi$ in $x$ and $\partial\Phi:=\{(x,y)\in M_2\times M_2: y\in\partial\Phi(x)\}$.
 \item[(ii)] For all $x\in M_2$, $\Phi_n(x)\uparrow\Phi(x)$ and $J_n(x)\rightarrow x$, as $n\rightarrow\infty$.
 \item[(iii)] $\Phi_n$ is Fr\'echet differentiable and, for all $x\in M_2$, it holds $$D\Phi_n(x)=n(x-J_n(x))\in\partial\Phi(J_n(x))$$ and $D\Phi_n$ is Lipschitz.
 \item[(iv)] For each point $x\in\text{dom}(\partial\Phi)$,
 \begin{align*}
  D\Phi_n(x)\rightarrow\partial^0\Phi(x),
 \end{align*}
 where $\partial^0\Phi(x)$ denotes the element $y\in\partial\Phi(x)$ with minimal norm.
  \item[(v)] For each $x\in M_2$, it holds $\lVert D\Phi_n(x)\rVert\leq L_\Phi$.
\end{enumerate}
\end{lemma}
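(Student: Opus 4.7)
The plan is to invoke the classical Moreau--Yosida theory in the Hilbert space $M_2$. Since $\Phi$ is convex, bounded from below and $L_\Phi$-Lipschitz, it is in particular proper, continuous, and lower semicontinuous, hence its subdifferential $\partial\Phi$ is a maximal monotone operator on $M_2$. Properties (i)--(v) will then follow by combining the Fermat rule for the minimization problem \eqref{Yosida} with standard resolvent estimates.

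For (i) I would first show that the infimum in \eqref{Yosida} is uniquely attained. Fix $x \in M_2$: the functional $y\mapsto \Phi(y)+\frac{n}{2}\lVert x-y\rVert_{M_2}^2$ is strictly convex (due to the quadratic term), continuous, and coercive on the Hilbert space $M_2$, so a standard direct-method argument gives a unique minimizer $J_n(x)$. Writing Fermat's rule $0\in \partial(\Phi+\frac{n}{2}\lVert x-\cdot\rVert_{M_2}^2)(J_n(x))$ and using the sum rule (which applies because the quadratic term is Fr\'echet differentiable) yields $n(x-J_n(x))\in\partial\Phi(J_n(x))$, i.e.\ $J_n=(\id+\partial\Phi/n)^{-1}$. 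Plugging $y=J_n(x)$ into the infimum gives the stated identity for $\Phi_n(x)$.

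For (ii) and (v), the $n$-monotonicity of $\Phi_n$ follows from the fact that the penalty term is increasing in $n$, and $\Phi_n(x)\leq \Phi(x)$ by testing $y=x$. Using the Lipschitz bound $\Phi(y)\geq \Phi(x)-L_\Phi\lVert x-y\rVert_{M_2}$ and minimizing the resulting quadratic in $\lVert x-y\rVert_{M_2}$ gives $\Phi_n(x)\geq \Phi(x)-L_\Phi^2/(2n)$, which yields $\Phi_n(x)\uparrow\Phi(x)$. Since $n(x-J_n(x))\in\partial\Phi(J_n(x))$ and $L_\Phi$-Lipschitzianity of $\Phi$ gives $\lVert y^*\rVert\leq L_\Phi$ for every $y^*\in\partial\Phi(z)$, we obtain $n\lVert x-J_n(x)\rVert_{M_2}\leq L_\Phi$, which proves (v) and also forces $J_n(x)\to x$ with rate $1/n$, completing (ii).

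The main obstacle is (iii), where one has to verify Fr\'echet differentiability and not just directional differentiability; I would rely on the firm non-expansiveness of the resolvent $J_n$ and the Moreau identity $\id = J_n + \frac{1}{n}D\Phi_n$. Concretely, I would first show that $J_n$ is non-expansive by monotonicity of $\partial\Phi$ applied to the inclusions $n(x-J_n(x))\in\partial\Phi(J_n(x))$ and $n(y-J_n(y))\in\partial\Phi(J_n(y))$. Given a candidate gradient $\nabla\Phi_n(x):=n(x-J_n(x))$, I would expand
\begin{align*}
 \Phi_n(x+h)-\Phi_n(x)-\langle n(x-J_n(x)),h\rangle_{M_2}
\end{align*}
using the identity from (i) at both $x$ and $x+h$, together with the subgradient inequality $\Phi(J_n(x+h))\geq\Phi(J_n(x))+\langle n(x-J_n(x)),J_n(x+h)-J_n(x)\rangle_{M_2}$ and its symmetric counterpart. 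A direct estimate then shows the error is $o(\lVert h\rVert_{M_2})$, proving Fr\'echet differentiability with gradient $D\Phi_n(x)=n(x-J_n(x))$; the Lipschitz estimate $\lVert D\Phi_n(x)-D\Phi_n(y)\rVert\leq n\lVert x-y\rVert_{M_2}$ is then immediate from $D\Phi_n=n(\id-J_n)$ and non-expansiveness of $J_n$. Finally, for (iv), for $x\in\mathrm{dom}(\partial\Phi)$ and any $z\in\partial\Phi(x)$, monotonicity applied to $(x,z)$ and $(J_n(x),n(x-J_n(x)))$ yields $\lVert D\Phi_n(x)\rVert\leq\lVert z\rVert$, so the family $\{D\Phi_n(x)\}_n$ is bounded by $\lVert\partial^0\Phi(x)\rVert$; combined with $J_n(x)\to x$ and the closedness of the graph of $\partial\Phi$, every weak accumulation point of $D\Phi_n(x)$ lies in $\partial\Phi(x)$, and the norm bound forces it to coincide with the minimal-norm element $\partial^0\Phi(x)$, which also upgrades to strong convergence by lower semicontinuity of the norm and the uniform bound.
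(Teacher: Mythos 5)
Your proposal is correct, and every step checks out: the direct-method/Fermat-rule derivation of (i), the two-sided bound $\Phi(x)-L_\Phi^2/(2n)\leq\Phi_n(x)\leq\Phi(x)$ for (ii), the sandwich argument via the two subgradient inequalities giving the $O(\lVert h\rVert^2)$ error term for Fr\'echet differentiability in (iii), the monotonicity estimate $n\lVert x-J_n(x)\rVert\leq\lVert z\rVert$ for all $z\in\partial\Phi(x)$ plus demiclosedness of the maximal monotone graph for (iv), and the bound on subgradients of a Lipschitz function for (v). The difference from the paper is one of presentation rather than substance: the paper disposes of (i)--(iv) entirely by citation (to Attouch's book and Borwein--Noll) and only writes out a direct argument for (v) -- which is essentially identical to yours, namely that any element of $\partial\Phi(J_n(x))$ is bounded in norm by $L_\Phi$. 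What your version buys is a self-contained account, including the resolvent identity $D\Phi_n=n(\id-J_n)$ and the firm non-expansiveness of $J_n$, which the paper never states explicitly but which are exactly the facts the cited references prove; the cost is length. One presentational remark: throughout you silently identify the Fr\'echet derivative $D\Phi_n(x)\in M_2^*$ with the vector $n(x-J_n(x))\in M_2$ via the Riesz isomorphism -- the paper does the same, so this is consistent, but it is worth a word since $\partial\Phi(J_n(x))$ is a subset of $M_2$ in the paper's convention.
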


\begin{proof}
 \textbf{(i):} See \cite[p. 58]{BorweinNoll} or \cite[Theorem 3.24, p. 301]{Attouch}, .\\
 \textbf{(ii):} See Theorem 2.64 in \cite[p. 229]{Attouch}.\\
 \textbf{(iii):} See \cite[p. 58]{BorweinNoll}, and \cite[Thm. 3.24]{Attouch}.\\
 \textbf{(iv):} See \cite[Proposition 3.56 (c), equation (3.136), p. 354]{Attouch}.\\
 \textbf{(v):} By (iii), it holds $D\Phi_n(x)\in\partial\Phi(y_0)$ for some $y_0\in M_2$ (namely $y_0=J_n(x)$). By the definition of the subdifferential, it holds for every $g\in\partial\Phi(y_0)$ and every $h\in M_2$:
 \begin{align*}
  \langle g,h\rangle\leq\Phi(y_0+h)-\Phi(y_0)\leq L_\Phi\lVert h\rVert_{M_2}.
 \end{align*}
 In particular, $D\Phi_n(x)[h]\leq L_\Phi\lVert h\rVert_{M_2}$ and $D\Phi_n(x)[-h]\leq L_\Phi\lVert h\rVert_{M_2}$ and thus
 \begin{align*}
  |D\Phi_n(x)[h]|\leq L_\Phi\lVert h\rVert_{M_2},\text{ which implies }\lVert D\Phi_n(x)\rVert\leq L_\Phi.\quad
 \end{align*}
\end{proof}

The following lemma shows that we can approximate $p(\eta)$ by a sequence $p_n(\eta)$ using the Moreau-Yosida approximations for the payoff functions.

\begin{proposition}
 Let the payoff function $\Phi:M_2\rightarrow \R$ be of type \textbf{\emph{(A')}}. Let $\Phi_n$ be given by \eqref{Yosida}. Set $p_n(\eta):=E[\Phi_n(X^0_T(\eta))]$ for $\eta\in\M_2$. Then, for all $\eta\in\M_2$, $p_n(\eta)\rightarrow p(\eta)$ as $n\rightarrow\infty$.
\end{proposition}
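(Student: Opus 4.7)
The plan is to combine the pointwise monotone convergence $\Phi_n(x)\uparrow\Phi(x)$ from Lemma \ref{PropertiesYosida}(ii) with a uniform $L^1$-bound in order to interchange limit and expectation. Concretely, for each $\omega\in\Omega$, applying Lemma \ref{PropertiesYosida}(ii) pointwise at $X^0_T(\eta,\omega)\in M_2$ yields
\begin{align*}
\Phi_n(X^0_T(\eta,\omega))\uparrow \Phi(X^0_T(\eta,\omega))\quad\text{as }n\to\infty,
\end{align*}
so the convergence under the expectation is monotone almost surely.

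Next I would verify the integrability required to move the limit through the expectation. By Hypothesis \textbf{(A')}, $\Phi$ is globally Lipschitz with constant $L_\Phi$ and bounded from below by some constant $C\in\R$, hence for every $x\in M_2$
\begin{align*}
C\leq \Phi(x)\leq |\Phi(0)|+L_\Phi\lVert x\rVert_{M_2}.
\end{align*}
By Lemma \ref{4thMoments}, $E[\lVert X^0_T(\eta)\rVert_{M_2}^4]<\infty$, which in particular yields $E[\lVert X^0_T(\eta)\rVert_{M_2}]<\infty$, and hence $\Phi(X^0_T(\eta))\in L^1(\Omega,\R)$. From the very definition $\Phi_n(x)=\inf_{y\in M_2}\bigl(\Phi(y)+\tfrac{n}{2}\lVert x-y\rVert_{M_2}^2\bigr)$ and the lower bound $\Phi\geq C$, I obtain also $\Phi_n(x)\geq C$, while by Lemma \ref{PropertiesYosida}(ii) $\Phi_n(x)\leq \Phi(x)$, so the sequence $\{\Phi_n(X^0_T(\eta))\}_n$ is sandwiched between two $L^1$-functions.

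Finally, applying the monotone convergence theorem to the nonnegative increasing sequence $\Phi_n(X^0_T(\eta))-C\uparrow \Phi(X^0_T(\eta))-C$ gives
\begin{align*}
p_n(\eta)-C=E\bigl[\Phi_n(X^0_T(\eta))-C\bigr]\longrightarrow E\bigl[\Phi(X^0_T(\eta))-C\bigr]=p(\eta)-C,
\end{align*}
and hence $p_n(\eta)\to p(\eta)$, as claimed. Alternatively, dominated convergence with majorant $|\Phi(0)|+L_\Phi\lVert X^0_T(\eta)\rVert_{M_2}+|C|$ delivers the same conclusion. There is no genuine obstacle here: the only point worth spelling out is the uniform lower bound $\Phi_n\geq C$, which is immediate from the infimum definition of the Moreau--Yosida approximation, and the use of Lemma \ref{4thMoments} to secure integrability of the dominating function.
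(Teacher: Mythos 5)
Your proof is correct and follows essentially the same route as the paper: both rely on the monotone convergence $\Phi_n\uparrow\Phi$ from Lemma \ref{PropertiesYosida}(ii) together with the uniform lower bound inherited from \textbf{(A')} (the paper normalises $\Phi$ to be nonnegative w.l.o.g., you subtract the constant $C$, which is the same device), and then apply the monotone convergence theorem. The extra integrability remarks you include via the Lipschitz bound and Lemma \ref{4thMoments} are harmless but not needed for the monotone convergence argument itself.
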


\begin{proof}
 As $\Phi$ is bounded from below, we can w.l.o.g. assume $\Phi$ being nonnegative. Then it is immediately clear from \eqref{Yosida} that also $\Phi_n$ is nonnegative for every $n$. Since $\Phi_n(x)\uparrow\Phi(x)$, we have that, for every $\omega\in\Omega$, $\Phi_n(X^0_T(\eta,\omega))\uparrow\Phi(X^0_T(\eta,\omega))$ and therefore, by monotone convergence
 \begin{align*}
  \lim_{n\rightarrow\infty}p_n(\eta)=\lim_{n\rightarrow\infty}E[\Phi_n(X^0_T(\eta))]=E[\Phi(X^0_T(\eta))]=p(\eta).\quad
 \end{align*}
\end{proof}

\begin{definition}
 Let $\mathcal X$ and $\mathcal Y$ be Banach spaces. We call a function $F:\mathcal X\rightarrow \mathcal Y$ \emph{LC directional differentiable} at $x\in \mathcal X$ if the directional derivative $\partial_hF(x)$ exists for each direction $h\in\mathcal X$ and defines a bounded linear operator from $\mathcal X$ to $\mathcal Y$. 
\end{definition}

\begin{lemma}\label{YosidaConvergenceDerivative}
 For each point $x\in M_2$ at which $\Phi$ is LC directional differentiable, it holds
 \begin{align*}
  D\Phi_n(x)\rightarrow \partial_\cdot\Phi(x).
 \end{align*}
\end{lemma}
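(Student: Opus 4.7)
The plan is to combine three ingredients from Lemma \ref{PropertiesYosida}: the inclusion $D\Phi_n(x) \in \partial\Phi(J_n(x))$, the strong convergence $J_n(x) \to x$, and the uniform bound $\lVert D\Phi_n(x)\rVert \le L_\Phi$. The goal is to identify every accumulation point of the bounded sequence $\{D\Phi_n(x)\}_n$ in $M_2^\ast$ with the prescribed linear functional $\partial_\cdot\Phi(x)$.

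First I would observe that LC directional differentiability of a convex continuous function $\Phi$ at $x$ forces Gâteaux differentiability there. Indeed, any $y^\ast \in \partial\Phi(x)$ satisfies $\langle y^\ast,h\rangle \le \partial_h\Phi(x)$ for every $h \in M_2$; applying this to $\pm h$ and using linearity of $h \mapsto \partial_h\Phi(x)$ forces equality, so $\partial\Phi(x) = \{\partial_\cdot\Phi(x)\}$ is a singleton.

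Next, identifying $M_2^\ast$ with $M_2$ via Riesz, the uniform bound $\lVert D\Phi_n(x)\rVert \le L_\Phi$ together with Banach--Alaoglu yields that every subsequence of $\{D\Phi_n(x)\}_n$ admits a further sub-subsequence $D\Phi_{n_k}(x)$ converging weakly to some $y^\ast \in M_2$. I would then appeal to the closedness of the graph of $\partial\Phi$ in the strong $\times$ weak topology. Concretely, from $D\Phi_{n_k}(x) \in \partial\Phi(J_{n_k}(x))$ we have, for every $z \in M_2$,
$$\Phi(z) \ge \Phi(J_{n_k}(x)) + \langle D\Phi_{n_k}(x), z - x\rangle + \langle D\Phi_{n_k}(x), x - J_{n_k}(x)\rangle.$$
Using continuity of $\Phi$, the weak convergence $D\Phi_{n_k}(x) \rightharpoonup y^\ast$, the strong convergence $J_{n_k}(x) \to x$, and the estimate $|\langle D\Phi_{n_k}(x), x - J_{n_k}(x)\rangle| \le L_\Phi\, \lVert x - J_{n_k}(x)\rVert_{M_2} \to 0$, we pass to the limit to obtain $\Phi(z) \ge \Phi(x) + \langle y^\ast, z - x\rangle$ for every $z$. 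Hence $y^\ast \in \partial\Phi(x) = \{\partial_\cdot\Phi(x)\}$, so $y^\ast = \partial_\cdot\Phi(x)$. Since every subsequence of $\{D\Phi_n(x)\}_n$ has a further subsequence with the same weak limit, the full sequence converges weakly to $\partial_\cdot\Phi(x)$.

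The main subtlety, and what I expect to be the principal obstacle, is that the symbol $\to$ in the statement cannot be read as operator-norm convergence without additional structure on $\Phi$ (such as uniform convexity or smoothness of its conjugate); the correct reading here is weak convergence in $M_2^\ast$, i.e.\ $D\Phi_n(x)[h] \to \partial_h\Phi(x)$ for every $h \in M_2$. This reading is consistent with part (iv) of Lemma \ref{PropertiesYosida}, which identifies the strong limit only on $\text{dom}(\partial\Phi)$ and only as the minimal-norm element of $\partial\Phi(x)$. Once this interpretation is fixed, the remaining work is the routine verification of the graph-closedness passage to the limit above.
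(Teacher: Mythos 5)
Your proposal is correct, and the first half (showing that LC directional differentiability forces $\partial\Phi(x)$ to be the singleton $\{\partial_\cdot\Phi(x)\}$, via $\langle g,\pm h\rangle\leq\partial_{\pm h}\Phi(x)$ and linearity in $h$) is exactly the paper's argument. Where you diverge is the convergence step: the paper simply invokes Lemma \ref{PropertiesYosida}~(iv) (Attouch, Prop.\ 3.56(c)), which gives $D\Phi_n(x)\rightarrow\partial^0\Phi(x)$ in \emph{norm} for every $x\in\mathrm{dom}(\partial\Phi)$ --- and since $\Phi$ is convex and globally Lipschitz, $\mathrm{dom}(\partial\Phi)=M_2$, so this applies at every point and the minimal-norm selection coincides with the unique subgradient. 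You instead rebuild this from parts (ii), (iii), (v): uniform boundedness, weak sequential compactness, and strong--weak closedness of the graph of $\partial\Phi$. That argument is sound and has the virtue of being self-contained, but it delivers only weak convergence, and your closing caveat --- that norm convergence would require additional structure on $\Phi$ --- is not quite right in this setting: $M_2$ is a Hilbert space, and the standard Yosida-approximation result the paper cites upgrades your weak limit to a strong one there (one shows $\lVert D\Phi_n(x)\rVert\leq\lVert\partial^0\Phi(x)\rVert$, so weak convergence plus convergence of norms gives norm convergence). This matters downstream: the proof of the proposition following this lemma estimates $|(D\Phi_n(X^0_T(\eta))-\partial_\cdot\Phi(X^0_T(\eta)))[DX^0_T(\eta)[h]]|$ by the operator norm of the difference, so it uses the strong convergence (although, as you implicitly note, weak convergence applied to the fixed vector $DX^0_T(\eta)[h]$ would also suffice there). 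In short: your route is a valid, more elementary substitute for the citation, at the cost of a weaker (but still sufficient) mode of convergence and a slightly misleading remark about why.
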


\begin{proof}
 Since $\Phi$ is directional differentiable in $x$ in each direction $h\in M_2$, it holds that $\partial\Phi(x)$ is a singleton. In fact, by definition of the subdifferential and the directional derivative,

\begin{align*}
  \forall h\in M_2\,\begin{cases}
                     &\partial_h\Phi(x)=\lim_{\varepsilon\rightarrow0}\frac{\Phi(y_0+\varepsilon h)-\Phi(y_0)}{\varepsilon}\geq \langle g,h\rangle,\,\forall g\in\partial\Phi(x)\\
                     &\partial_h\Phi(x)=-\partial_{-h}\Phi(x)\leq -\langle g,-h\rangle=\langle g,h\rangle,\,\forall g\in\partial\Phi(x),
                    \end{cases}
 \end{align*}
 i.e. $\partial\Phi(x)=\{\partial_{\cdot}\Phi(x)\}$. It follows by Lemma \ref{PropertiesYosida} (iv) that $D\Phi_n(x)\rightarrow\partial^0\Phi(x)=\partial_{\cdot}\Phi(x)$.
\end{proof}

The following lemma, which is directly taken out of \cite{Phelps}, shows that the set of points where $\Phi$ is not LC directional differentiable, is a Gaussian null set. Recall that a measure $\mu$ on a Banach space $\mathcal{B}$ is called \textit{Gaussian} if for any nonzero $b\in\mathcal{B}^*$, the image measure $b_*(\mu):=\mu\circ b^{-1}$ is a Gaussian measure on $\R$. It is called \textit{nondegenerate}, if for any $b\in\mathcal{B}^*$, the variance of $b_*(\mu)$ is nonzero.

\begin{lemma}\label{GateauxNullset}
 Let $\mathcal{X}$ be a real separable Banach space, $\mathcal{Y}$ be a real Banach space such that every function $[0,1]\rightarrow\mathcal{Y}$ of bounded variation is a.e. differentiable, $\emptyset\neq G\subset\mathcal{X}$ open. Moreover, let $T: G\rightarrow\mathcal{Y}$ be a locally Lipschitz mapping. Then $T$ is LC directional differentiable outside a Gaussian null subset of $G$, i.e. for every nondegenerate Gaussian measure $\mu$ on $G$,
 \begin{align*}
  \mu(\{x\in G:\,T\text{ not LC directional differentiable in }x\})=0.
 \end{align*}
\end{lemma}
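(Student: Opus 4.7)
The plan is to adapt the infinite-dimensional Rademacher-type argument due to Phelps. The key idea is that on a separable Banach space, a nondegenerate Gaussian measure $\mu$ admits a Cameron--Martin-style decomposition into conditional measures along one-dimensional fibres, and along each such fibre the hypothesis on $\mathcal{Y}$ yields classical almost-everywhere differentiability, which plays the role Lebesgue measure plays in the finite-dimensional Rademacher theorem.

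First I would fix an arbitrary nondegenerate Gaussian measure $\mu$ on $G$ and a single direction $v$ in the Cameron--Martin space $H_\mu$, and show that the set
$$N_v := \{x\in G : \partial_v T(x) \text{ does not exist}\}$$
satisfies $\mu(N_v)=0$. For this, one disintegrates $\mu$ along the family of lines $\{x+tv : t\in\R\}$; because $v\in H_\mu$, the resulting one-dimensional marginals are absolutely continuous with respect to Lebesgue measure. On each such line, $t\mapsto T(x+tv)$ is locally Lipschitz and hence of locally bounded variation, so by the assumption on $\mathcal{Y}$ it is Lebesgue-a.e. differentiable along the fibre. An application of Fubini then delivers $\mu(N_v)=0$.

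Using the separability of $\mathcal{X}$ (and hence of $H_\mu$), I would next choose a countable dense set $\{v_n\}\subset H_\mu$ and set $N := \bigcup_n N_{v_n}$, which is still $\mu$-null. Off $N$, the directional derivative $\partial_{v_n}T(x)$ exists for every $n$, and the local Lipschitz bound $\|\partial_{v_n}T(x)-\partial_{v_m}T(x)\|_{\mathcal{Y}}\le L(x)\|v_n-v_m\|_{\mathcal{X}}$ permits one to extend $v\mapsto \partial_v T(x)$ by continuity and density to a bounded positively homogeneous map on all of $\mathcal{X}$; a three-$\varepsilon$ argument then identifies this extension with the actual directional derivative in every $v\in\mathcal{X}$.

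The main obstacle is upgrading positive homogeneity to linearity, i.e.\ establishing $\partial_{v+w}T(x) = \partial_v T(x)+\partial_w T(x)$. This can genuinely fail at individual points even for one-dimensional Lipschitz maps, so it cannot be read off from the existence of one-sided directional derivatives alone. Phelps' remedy is to enlarge the countable collection $\{v_n\}$ to include all rational linear combinations and to invoke the disintegration of $\mu$ along two-dimensional affine subspaces, throwing out an additional $\mu$-null set on which additivity fails. Since the argument is valid for every nondegenerate Gaussian measure $\mu$, the combined exceptional set is Gaussian null in the sense of the statement, and we refer to \cite{Phelps} for the detailed execution.
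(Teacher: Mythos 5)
Your proposal is correct and follows essentially the same route as the paper, which simply cites Aronszajn and Phelps for this lemma: your outline (disintegration of the Gaussian measure along Cameron--Martin directions, a.e.\ differentiability along fibres via the bounded-variation hypothesis on $\mathcal{Y}$, Fubini, a countable dense set of directions with Lipschitz extension, and the Aronszajn--Phelps treatment of additivity via rational linear combinations) is a faithful summary of exactly those cited arguments. You also correctly single out the upgrade from positive homogeneity to linearity as the one genuinely delicate step, which is where the cited references do the real work.
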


\begin{proof}
 See Theorem 1, Chapter 2 of \cite{Aronszajn} and Theorem 6 in \cite{Phelps}.
\end{proof}

\noindent This motivates the following assumption:\\
\par
\noindent\textbf{Hypothesis (G):} The distribution of $X^0_T(\eta)$ is absolutely continuous w.r.t. some nondegenerate Gaussian measure, namely it holds $P_{X^0_T(\eta)}:=X^0_T(\eta)(P):=P\circ (X^0_T(\eta))^{-1}\ll\mu$
for some nondegenerate Gaussian measure $\mu$.\\
\par

\noindent The following lemma provides a chain rule for $\Phi\circ X^0_T$

\begin{lemma}\label{ChainRulePhi}
 Let $\eta\in\M_2$ and $h\in M_2$. Under Hypotheses \textbf{\emph{(EU)}}, \textbf{\emph{(Flow)}}, \textbf{\emph{(H)}}, \textbf{\emph{(A')}} and \textbf{\emph{(G)}} it holds that the directional derivative $\partial_h(\Phi\circ X^0_T)(\eta)$ exists a.s. and we have
 \begin{align*}
 \partial_h(\Phi\circ X^0_T)(\eta)=\partial_{DX^0_T(\eta)[h]}\Phi(X^0_T(\eta)).
\end{align*}
\end{lemma}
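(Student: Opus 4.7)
The plan is to deduce the chain rule from the two differentiability assumptions present: Fr\'echet differentiability of the flow $\eta\mapsto X_T^0(\eta,\omega)$ (guaranteed by the $\mathcal{C}^{1,\varepsilon}$-semiflow property under \textbf{(EU)}, \textbf{(Flow)} and \textbf{(H)}), and the pathwise LC directional differentiability of $\Phi$ at the point $X_T^0(\eta,\omega)$, which is delicate since $\Phi$ is only assumed convex and Lipschitz.

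First I would isolate the set of points where $\Phi$ is well-behaved. Since $M_2$ is a real separable Banach space and $\Phi:M_2\to\R$ is globally Lipschitz (in particular locally Lipschitz), and since any real-valued function of bounded variation on $[0,1]$ is a.e.\ differentiable, Lemma~\ref{GateauxNullset} applies with $\mathcal{X}=M_2$, $\mathcal{Y}=\R$ and $G=M_2$. It provides a set $N\subset M_2$ such that $\Phi$ is LC directional differentiable at every $x\in M_2\setminus N$ and $\mu(N)=0$ for every nondegenerate Gaussian measure $\mu$ on $M_2$. By Hypothesis~\textbf{(G)} we have $P_{X_T^0(\eta)}\ll\mu$ for some such $\mu$, hence
\begin{align*}
 P\bigl(X_T^0(\eta)\in N\bigr)=P_{X_T^0(\eta)}(N)=0.
\end{align*}
Thus, for $P$-almost every $\omega$, $\Phi$ is LC directional differentiable at $X_T^0(\eta,\omega)$.

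Next I would combine this with the pathwise Fr\'echet differentiability of the semiflow. Fix such an $\omega$ and write, by Fr\'echet differentiability of $\eta\mapsto X_T^0(\eta,\omega)$,
\begin{align*}
 X_T^0(\eta+\varepsilon h,\omega)=X_T^0(\eta,\omega)+\varepsilon\,DX_T^0(\eta,\omega)[h]+R(\varepsilon,\omega),\qquad\|R(\varepsilon,\omega)\|_{M_2}=o(\varepsilon).
\end{align*}
Using the global Lipschitz continuity of $\Phi$ with constant $L_\Phi$, we get
\begin{align*}
 \bigl|\Phi(X_T^0(\eta+\varepsilon h,\omega))-\Phi\bigl(X_T^0(\eta,\omega)+\varepsilon\,DX_T^0(\eta,\omega)[h]\bigr)\bigr|\leq L_\Phi\|R(\varepsilon,\omega)\|_{M_2}=o(\varepsilon),
\end{align*}
so that, dividing by $\varepsilon$,
\begin{align*}
 \frac{\Phi(X_T^0(\eta+\varepsilon h,\omega))-\Phi(X_T^0(\eta,\omega))}{\varepsilon}=\frac{\Phi\bigl(X_T^0(\eta,\omega)+\varepsilon DX_T^0(\eta,\omega)[h]\bigr)-\Phi(X_T^0(\eta,\omega))}{\varepsilon}+o(1).
\end{align*}
By the LC directional differentiability of $\Phi$ at $X_T^0(\eta,\omega)$ in direction $DX_T^0(\eta,\omega)[h]\in M_2$, the right-hand side converges, as $\varepsilon\to 0$, to $\partial_{DX_T^0(\eta,\omega)[h]}\Phi(X_T^0(\eta,\omega))$. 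This proves the claimed identity for a.e.\ $\omega$.

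The main obstacle is conceptual rather than computational: ensuring that the \emph{random} point $X_T^0(\eta,\omega)$ almost surely avoids the exceptional set where $\Phi$ fails to be LC directional differentiable. Once Hypothesis~\textbf{(G)} and the Aronszajn--Phelps-type result in Lemma~\ref{GateauxNullset} are available, this reduces to the Gaussian-null versus absolutely-continuous argument above, and the rest of the proof is a standard composition with a remainder estimate using the Lipschitz property of $\Phi$.
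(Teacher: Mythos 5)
Your proposal is correct and follows essentially the same route as the paper's proof: the same decomposition of the difference quotient into a remainder term controlled by the Lipschitz constant $L_\Phi$ together with the Fr\'echet differentiability of the semiflow, and the same use of Hypothesis \textbf{(G)} with Lemma \ref{GateauxNullset} to ensure that $X^0_T(\eta,\omega)$ almost surely avoids the Gaussian null set where $\Phi$ fails to be LC directional differentiable. The only difference is cosmetic (you verify the a.s.\ differentiability first and also check the hypotheses of Lemma \ref{GateauxNullset} explicitly, which the paper leaves implicit).
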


\begin{proof}
 By definition of the directional derivative, we have
 \begin{align*}
  \partial_h(\Phi\circ X^0_T)(\eta)&=\lim_{\varepsilon\rightarrow0}\frac{\Phi(X^0_T(\eta+\varepsilon h))-\Phi(X^0_T(\eta))}{\varepsilon}\\
  &=\lim_{\varepsilon\rightarrow0}\bigg(\frac{\Phi\Big(X^0_T(\eta)+\varepsilon\frac{X^0_T(\eta+\varepsilon h)-X^0_T(\eta)}{\varepsilon}\Big)-\Phi(X^0_T(\eta)+\varepsilon DX^0_T(\eta)[h])}{\varepsilon}\\
  &\quad+\frac{\Phi(X^0_T(\eta)+\varepsilon DX^0_T(\eta)[h])-\Phi(X^0_T(\eta))}{\varepsilon}\bigg).
 \end{align*}
 Remark that, by Hypothesis \textbf{(A')}, $\Phi$ is Lipschitz, and, by Hypotheses \textbf{(EU)}, \textbf{(Flow)} and \textbf{(H)}, $X^0_T$ is Fr\'echet differentiable. Then we have for the first summand in this limit
 \begin{align*}
  &\bigg|\frac{\Phi\Big(X^0_T(\eta)+\varepsilon\frac{X^0_T(\eta+\varepsilon h)-X^0_T(\eta)}{\varepsilon}\Big)-\Phi(X^0_T(\eta)+\varepsilon DX^0_T(\eta)[h])}{\varepsilon}\bigg|\\
  &\quad\leq L_\Phi\bigg|\frac{X^0_T(\eta)+\varepsilon\frac{X^0_T(\eta+\varepsilon h)-X^0_T(\eta)}{\varepsilon}-X^0_T(\eta)-\varepsilon DX^0_T(\eta)[h]}{\varepsilon}\bigg|\\
  &\quad=L_\Phi\bigg|\frac{X^0_T(\eta+\varepsilon h)-X^0_T(\eta)}{\varepsilon}- DX^0_T(\eta)[h]\bigg|\rightarrow0,\quad\text{ as }\varepsilon\rightarrow0.
 \end{align*}
 As for the second summand in the above limit, by Hypothesis \textbf{(G)} and Lemma \ref{GateauxNullset}, we immediately have that
\begin{align*}
 P(\{\omega\in\Omega:\,\Phi\text{ is not LC directional differentiable in }X^0_T(\eta,\omega)\})=0
\end{align*}
and thus,
\begin{align*}
 \partial_{DX^0_T(\eta)[h]}\Phi(X^0_T(\eta))=\lim_{\varepsilon\rightarrow0}\frac{\Phi(X^0_T(\eta)+\varepsilon DX^0_T(\eta)[h])-\Phi(X^0_T(\eta))}{\varepsilon}
\end{align*}
exists almost surely. This ends the proof.
\end{proof}

\begin{proposition}
 Under Hypotheses \textbf{\emph{(EU)}}, \textbf{\emph{(Flow)}}, \textbf{\emph{(H)}}, \textbf{\emph{(A')}} and \textbf{\emph{(G)}} it holds
 \begin{align}
  \partial_hp_n(\eta)\rightarrow\partial_hp(\eta).
 \end{align}
\end{proposition}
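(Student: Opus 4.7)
The plan is to first obtain explicit integral representations of both $\partial_h p_n(\eta)$ and $\partial_h p(\eta)$, and then pass to the limit inside the expectation by dominated convergence. The target identity will be
\[
 \partial_h p_n(\eta) = E\bigl[ D\Phi_n(X^0_T(\eta))[DX^0_T(\eta)[h]] \bigr] \longrightarrow E\bigl[ \partial_{DX^0_T(\eta)[h]}\Phi(X^0_T(\eta)) \bigr] = \partial_h p(\eta).
\]

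First I would handle $p_n$. By Lemma \ref{PropertiesYosida}(iii) and (v), $\Phi_n$ is $C^1$, globally Lipschitz with constant $L_\Phi$ independent of $n$, and $D\Phi_n$ is globally Lipschitz; hence $\Phi_n$ satisfies Hypotheses \textbf{(A)}. Consequently Lemma \ref{InterchangingExpectationAndDifferentiation0} (applied with $\Phi$ replaced by $\Phi_n$) together with the chain rule gives the above representation of $\partial_h p_n(\eta)$. For $p$ itself, $\Phi$ under \textbf{(A')} is globally Lipschitz, so the same Vitali/Pratt argument as in the proof of Lemma \ref{InterchangingExpectationAndDifferentiation0} applies verbatim: the difference quotients are dominated in $L^2$ by $L_\Phi \varepsilon^{-1}\|X^0_T(\eta+\varepsilon h)-X^0_T(\eta)\|_{M_2}$, which is uniformly bounded in $L^2(\Omega)$ by the Gr\"onwall estimate of that lemma; combining with the a.s.\ pointwise convergence of the integrand supplied by Lemma \ref{ChainRulePhi} (which requires \textbf{(G)}) yields $\partial_h p(\eta) = E[\partial_{DX^0_T(\eta)[h]}\Phi(X^0_T(\eta))]$.

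Next I would prove the pointwise convergence of the integrands. Lemma \ref{GateauxNullset} together with Hypothesis \textbf{(G)} ensures that $X^0_T(\eta)(\omega)$ lies $P$-a.s.\ in the set of points at which $\Phi$ is LC directional differentiable, because $\Phi$ is Lipschitz on the separable Banach space $M_2$ and the law of $X^0_T(\eta)$ is absolutely continuous with respect to a nondegenerate Gaussian measure. On this full-measure set, Lemma \ref{YosidaConvergenceDerivative} delivers $D\Phi_n(X^0_T(\eta)) \to \partial_\cdot \Phi(X^0_T(\eta))$, which, evaluated at the fixed random direction $DX^0_T(\eta)[h]$, yields the desired a.s.\ pointwise convergence
\[
 D\Phi_n(X^0_T(\eta))[DX^0_T(\eta)[h]] \longrightarrow \partial_{DX^0_T(\eta)[h]}\Phi(X^0_T(\eta)).
\]

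Finally I would invoke dominated convergence. By Lemma \ref{PropertiesYosida}(v), $\|D\Phi_n(x)\| \leq L_\Phi$ for every $n$ and every $x\in M_2$, so uniformly in $n$
\[
 \bigl| D\Phi_n(X^0_T(\eta))[DX^0_T(\eta)[h]] \bigr| \leq L_\Phi \, \|DX^0_T(\eta)[h]\|_{M_2},
\]
and the right-hand side is integrable by Lemma \ref{4thMoments}. Passing to the limit under the expectation yields $\partial_h p_n(\eta) \to \partial_h p(\eta)$. The main obstacle, in my view, lies not in the dominated convergence step (which is almost mechanical once the bound $\|D\Phi_n\|\leq L_\Phi$ is in hand), but in justifying the representation $\partial_h p(\eta) = E[\partial_h(\Phi\circ X^0_T)(\eta)]$ when $\Phi$ is only convex-Lipschitz; one must lean on Hypothesis \textbf{(G)} and Lemma \ref{GateauxNullset} to guarantee that the chain-rule object $\partial_{DX^0_T(\eta)[h]}\Phi(X^0_T(\eta))$ exists on a set of full measure, since without \textbf{(G)} the random vector $X^0_T(\eta)$ could concentrate on the Gaussian null set where $\Phi$ fails to be directionally differentiable.
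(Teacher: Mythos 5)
Your proposal is correct and follows essentially the same route as the paper: establish $\partial_h p_n(\eta)=E[D(\Phi_n\circ X^0_T)(\eta)[h]]$ via Hypothesis \textbf{(A)} for $\Phi_n$ and $\partial_h p(\eta)=E[\partial_h(\Phi\circ X^0_T)(\eta)]$ via the Vitali/Pratt argument together with Lemma \ref{ChainRulePhi}, then combine the a.s.\ convergence $D\Phi_n(X^0_T(\eta))\rightarrow\partial_\cdot\Phi(X^0_T(\eta))$ (from Hypothesis \textbf{(G)}, Lemma \ref{GateauxNullset} and Lemma \ref{YosidaConvergenceDerivative}) with the uniform bound $\lVert D\Phi_n\rVert\leq L_\Phi$ and Lemma \ref{4thMoments} to pass to the limit by dominated convergence. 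This matches the paper's proof step for step, including the identification of the interchange of expectation and directional derivative for the merely convex-Lipschitz $\Phi$ as the delicate point.
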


\begin{proof}
By Lemma \ref{GateauxNullset} and Hypothesis \textbf{(G)}, we have that
\begin{align*}
 P(\{\omega\in\Omega:\,\Phi\text{ is not LC directional differentiable in }X^0_T(\eta,\omega)\})=0,
\end{align*}
and thus, by Lemma \ref{YosidaConvergenceDerivative},
\begin{align*}
 D\Phi_n(X^0_T(\eta)) \rightarrow \partial_\cdot\Phi(X^0_T(\eta)),\text{ a.s.}
\end{align*}
Therefore, applying the Fr\'echet differentiability of the mapping $\eta\mapsto X^0_T(\eta)$, the chain rule from Lemma \ref{ChainRulePhi} and the fact that the LC directional derivative is a continuous linear mapping (in the direction), we obtain

\begin{align*}
 |D(\Phi_n\circ X^0_T)(\eta)[h]-\partial_h(\Phi\circ X^0_T)(\eta)|&=|D\Phi_n(X^0_T(\eta))DX^0_T(\eta)[h]-\partial_{DX^0_T(\eta)[h]}\Phi(X^0_T(\eta))|\\
 &=|(D\Phi_n(X^0_T(\eta))-\partial_\cdot\Phi(X^0_T(\eta)))[DX^0_T(\eta)[h]]|\\
 &\leq\lVert D\Phi_n(X^0_T(\eta))-\partial_\cdot\Phi(X^0_T(\eta))\rVert\cdot\lVert DX^0_T(\eta)\rVert\cdot\lVert h\rVert\\
 &\rightarrow0,\text{ a.s., as }n\rightarrow\infty.
\end{align*}
Moreover, by Lemma \ref{PropertiesYosida} (v) and Lemma \ref{4thMoments}, it holds
\begin{align*}
 |D(\Phi_n\circ X^0_T)(\eta)[h]|&\leq\lVert D\Phi_n(X^0_T(\eta))\rVert\cdot\lVert DX^0_T(\eta)[h]\rVert\leq L_\Phi\lVert DX^0_T(\eta)[h]\rVert\in L^1(\Omega).
\end{align*}

Furthermore, similarly to the proof of Lemma \ref{InterchangingExpectationAndDifferentiation0}, it can be shown that

\begin{align}
  \partial_hp(\eta)&=E[\partial_h(\Phi\circ X^0_T)(\eta)]\text{ and}\label{InterchangingExpectationAndDifferentiation1}\\
  \partial_hp_n(\eta)&=E[D(\Phi_n\circ X^0_T)(\eta)[h]]\label{InterchangingExpectationAndDifferentiation2},
\end{align}

\noindent where, for \eqref{InterchangingExpectationAndDifferentiation1}, we use that the LC directional derivative of $\Phi\circ X^0_T$ is defined for a.e. $\omega\in\Omega$ (rather than the Fr\'echet derivative). It now follows by dominated convergence that
\begin{align*}
 \partial_hp_n(\eta)=E[D(\Phi_n\circ X^0_T)(\eta)[h]]\rightarrow E[\partial_h(\Phi\circ X^0_T)(\eta)]=\partial_hp(\eta).
\end{align*}
By this we end the proof. 
\end{proof}

Our final theorem summarizes the results of this section and shows that our representation formula \eqref{SkorohodRepSpecial} can be used in an approximation scheme for the directional derivatives of $p$ in this more general setup:

\begin{theorem}
 Let Hypotheses \textbf{\emph{(EU)}}, \textbf{\emph{(Flow)}}, \textbf{\emph{(H)}}, \textbf{\emph{(A')}} and \textbf{\emph{(G)}} be fulfilled. Let $\Phi_n$ denote the $n$th Moreau-Yosida approximation of $\Phi$. Then, for $\xi=\exp(\mathbb{B}(1_{[-r,0]}))$,
 \begin{align}
  \partial_hp(\eta)=-\lim_{n\rightarrow\infty}E\left[\left\{\delta\Big(\Phi_n(X^0_T(\et+\lxm))a(\cdot)\Big)\right\}\Big|_{\lambda=\frac{1}{\xi}}\right].
 \end{align}
\end{theorem}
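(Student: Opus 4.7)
The plan is to combine the two main strands of the subsection: the Moreau-Yosida smoothing, which lets us approximate $p$ by $p_n(\eta) = E[\Phi_n(X^0_T(\eta))]$ with regular $\Phi_n$, and the Skorohod representation of Theorem \ref{thm1special}, which requires precisely the regularity the $\Phi_n$ enjoy. The preceding proposition already gives $\partial_h p_n(\eta) \to \partial_h p(\eta)$, so the only thing left is to write the Skorohod representation of $\partial_h p_n(\eta)$ provided by Theorem \ref{thm1special} and then pass to the limit.

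First I would verify that each Moreau-Yosida approximation $\Phi_n$ satisfies Hypothesis \textbf{(A)}. By Lemma \ref{PropertiesYosida}(v) we have $\lVert D\Phi_n(x)\rVert \leq L_\Phi$ for every $x\in M_2$, so by the mean value inequality $\Phi_n$ is globally Lipschitz with constant at most $L_\Phi$; by Lemma \ref{PropertiesYosida}(iii) $\Phi_n$ is Fréchet differentiable and $D\Phi_n$ is (globally) Lipschitz. Hence Hypothesis \textbf{(A)} holds for $\Phi_n$, with Lipschitz constants that are uniform in $n$ (for $\Phi_n$ itself) and possibly depending on $n$ (for $D\Phi_n$), which is all that is needed to invoke the previous theorem.

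Next I would apply Theorem \ref{thm1special} with payoff $\Phi_n$, which yields
\begin{equation*}
\partial_h p_n(\eta) = -E\left[\left\{\delta\Big(\Phi_n(X^0_T(\et+\lxm))\,a(\cdot)\Big)\right\}\Big|_{\lambda=\frac{1}{\xi}}\right].
\end{equation*}
Since the hypotheses of the preceding proposition are assumed to hold (Hypotheses \textbf{(EU)}, \textbf{(Flow)}, \textbf{(H)}, \textbf{(A$'$)} and \textbf{(G)}), we already know that $\partial_h p_n(\eta) \to \partial_h p(\eta)$ as $n \to \infty$. Passing to the limit on both sides of the displayed identity gives exactly the claimed formula.

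I do not foresee a substantive obstacle: the bulk of the analytic work — interchange of differentiation and expectation for the smoothed payoff, uniform integrability, convergence of $D\Phi_n$ to the LC directional derivative of $\Phi$, and existence of the Skorohod evaluation at $\lambda = 1/\xi$ — has already been carried out in Lemma \ref{InterchangingExpectationAndDifferentiation0}, Lemma \ref{YosidaConvergenceDerivative}, Lemma \ref{ChainRulePhi}, Proposition \ref{ExistenceSkorohodEval} and Theorem \ref{thm1special}. The only mild subtlety is to emphasize that, although the Lipschitz constant of $D\Phi_n$ generally blows up with $n$, this does not affect the present statement: we only need the \emph{existence and the explicit form} of $\partial_h p_n(\eta)$ for each fixed $n$, and then the limit is handled entirely on the left-hand side by the previous proposition.
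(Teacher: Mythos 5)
Your proposal is correct and follows essentially the same route as the paper: identify via Lemma \ref{PropertiesYosida} (iii) and (v) that each $\Phi_n$ satisfies Hypothesis \textbf{(A)}, apply Theorem \ref{thm1special} to obtain the Skorohod representation of $\partial_h p_n(\eta)$ for each fixed $n$, and then pass to the limit using the preceding proposition $\partial_h p_n(\eta)\to\partial_h p(\eta)$. Your added remark that the possibly $n$-dependent Lipschitz constant of $D\Phi_n$ is harmless, since it is only needed to validate the representation at each fixed $n$, is a correct and worthwhile clarification that the paper leaves implicit.
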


\begin{proof}
 As we have shown so far, $\partial_hp(\eta)=\lim_{n\rightarrow\infty}E[D(\Phi_n\circ X^0_T)(\eta)[h]]$. It follows from Lemma \ref{PropertiesYosida} (iii) and (v) that $\Phi_n$ satisfies Hypothesis \textbf{(A)}. Therefore, we can apply Theorem \ref{thm1special}.
\end{proof}

\begin{remark}
 Making use of the linearity of the derivative operator and the expectation, this result can easily be generalised to $\Phi$ being given by the difference of two convex, bounded from below and globally Lipschitz continuous functions $\Phi^{(1)}$ and $\Phi^{(2)}$.
\end{remark}

To conclude this section, we provide an example, where the Hypothesis \textbf{(G)} holds.

\begin{example}
 Let $d=m$, $T>r$, $f$ be bounded and $g(s,\varphi)=Id_{d\times d}$, i.e.
 \begin{align*}
 \begin{cases}
  ^\eta x(t)&=\eta(0)+\int_0^tf(s,\,^\eta x(s),\,^\eta x_s)ds+W(t),\,t\in[0,T]\\
  ^\eta x_0&=\eta.
 \end{cases}
 \end{align*}
 Then, application of Girsanov's theorem (Novikov's condition is satisfied) yields that
 \begin{align*}
  ^\eta \tilde{W}(t):=\int_0^tf(s,\,^\eta x(s),\,^\eta x_s)ds+W(t)
 \end{align*}
 is an $m$-dimensional Brownian motion under a measure $^\eta Q\sim P$. Since $T>r$, we have
 \begin{align*}
  X^0_T(\eta)=(\eta(0)+\,^\eta \tilde{W}(T),\eta(0)+\,^\eta \tilde{W}_T).
 \end{align*}
 Now, since $P\ll\, ^\eta Q$, it holds also
 \begin{align*}
  P_{X^0_T(\eta)}\ll\, ^\eta Q_{X^0_T(\eta)}=\,^\eta Q_{(\eta(0)+^\eta \tilde{W}(T),\eta(0)+^\eta \tilde{W}_T)}.
 \end{align*}
 But $^\eta Q_{(\eta(0)+^\eta \tilde{W}(T),\eta(0)+^\eta \tilde{W}_T)}$ is a  Gaussian measure on $M_2$ as for every $e\in M_2$ and every $A\in\mathcal{B}(\R)$
 \begin{align*}
  &^\eta Q_{\langle(\eta(0)+\,^\eta \tilde{W}(T),\eta(0)+\,^\eta \tilde{W}_T),e\rangle}(A)=\,^\eta Q(\langle(\eta(0)+\,^\eta \tilde{W}(T),\eta(0)+\,^\eta \tilde{W}_T),e\rangle\in A)\\
  &\quad=\,^\eta Q\bigg(\eta(0)\Big(e(0)+\int_{-r}^0e(u)du\Big)+\,^\eta \tilde{W}(T)e(0)+\int_{-r}^0\,^\eta \tilde{W}(T+u)e(u)du\in A\bigg)
 \end{align*}
 and $^\eta \tilde{W}$ is a Gaussian process under $^\eta Q$.
\end{example}

\vspace{5mm}
\section*{Appendix}

\noindent
\emph{Proof of Lemma \ref{technicalLemma1}:}

 \textbf{(i):}
  \begin{align*}
   &E[\|X^0_T(\et+\lIxm)-X^0_T(\et+\lIIxm)\|^4_{M_2}]\\
   &=E\left[\left(|\ ^{\et+\lIxm}x(T)-\ ^{\et+\lIIxm}x(T)|^2_{\R^d}+\int_{T-r}^T |\ ^{\et+\lIxm}x(t)-\ ^{\et+\lIIxm}x(t)|^2_{\R^d}dt\right)^2\right].
  \end{align*}
 Now splitting up the integral into an integral on $[T-r,T-r\vee0]$ and an integral on $[T-r\vee0,T]$ as we have done already in the proof of Lemma \ref{4thMoments}, we get
 \begin{align*}
  \int_{T-r}^T |\ ^{\et+\lIxm}x(t)-\ ^{\et+\lIIxm}x(t)|^2_{\R^d}dt&\leq r|\lambda_1-\lambda_2|^2|\xi|^2\|h\|^2_{M_2}+\int_0^T |\ ^{\et+\lIxm}x(t)-\ ^{\et+\lIIxm}x(t)|^2_{\R^d}dt,
 \end{align*}
  and therefore,
 \begin{align*}
   E[\|X^0_T(\et+\lIxm)-X^0_T(\et+\lIIxm)\|^4_{M_2}]&\leq \OO(1)\bigg(E\left[|\ ^{\et+\lIxm}x(T)-\ ^{\et+\lIIxm}x(T)|^4_{\R^d}\right]+|\lambda_1-\lambda_2|^4\\
   &\quad+E\left[\int_{0}^T |\ ^{\et+\lIxm}x(t)-\ ^{\et+\lIIxm}x(t)|^4_{\R^d}dt\right]\bigg).
 \end{align*}
 Now consider the term $E\left[|\ ^{\et+\lIxm}x(t)-\ ^{\et+\lIIxm}x(t)|^4_{\R^d}\right]$. Similarly to the steps in the proof of Lemma \ref{4thMoments} (applying Jensen's inequality, Burkholder-Davis-Gundy's inequality and the Lipschitzianity of $f$ and $g$), we show that
  \begin{align*}
   &E\left[|\ ^{\et+\lIxm}x(t)-\ ^{\et+\lIIxm}x(t)|^4_{\R^d}\right]\\
   &\leq\OO(1)\bigg(|\lambda_1-\lambda_2|^4+(L_f^4+L_g^4)\int_0^TE[\|X^0_u(\et+\lIxm)-X^0_u(\et+\lIIxm)\|^4_{M_2}]du\bigg).
  \end{align*}
 Finally, we can plug this into the inequality from before and get
 \begin{align*}
   &E[\|X^0_T(\et+\lIxm)-X^0_T(\et+\lIIxm)\|^4_{M_2}]\\
   &\leq \OO(1)\bigg(|\lambda_1-\lambda_2|^4+(L_f^4+L_g^4)\int_0^TE[\|X^0_u(\et+\lIxm)-X^0_u(\et+\lIIxm)\|^4_{M_2}]du\\
   &\quad+\int_{0}^T |\lambda_1-\lambda_2|^4+(L_f^4+L_g^4)\int_0^TE[\|X^0_u(\et+\lIxm)-X^0_u(\et+\lIIxm)\|^4_{M_2}]dudt\bigg)\\
   &\leq \OO(1)\bigg(|\lambda_1-\lambda_2|^4+\int_0^TE[\|X^0_u(\et+\lIxm)-X^0_u(\et+\lIIxm)\|^4_{M_2}]du\bigg).
 \end{align*}
 Since we already know from Lemma \ref{4thMoments} that $t\mapsto E[\|X^0_t(\et+\lIxm)-X^0_t(\et+\lIIxm)\|^4_{M_2}]$ is integrable on $[0,T]$, the result follows directly by application of Gr\"{o}nwall's inequality and taking the square root.\\
 \par
 \textbf{(ii) and (iii):} The proof follows from the same considerations that we made in (i) and in the proof of Lemma \ref{4thMoments}, by applying Gr\"{o}nwall's inequality and make use of the fact that we have integrability of the functions $t\mapsto E[\|DX^0_T(\et+\lIxm)[\lIxm]\|^4_{M_2}]^{\frac{1}{2}}\leq C|\lambda_1|^2$ and $t\mapsto E[\|DX^0_T(\et+\lIxm)[\lIxm]-DX^0_T(\et+\lIIxm)[\lIIxm]\|^2_{M_2}]\leq C|\lambda_1-\lambda_2|^2$ by Lemma \ref{4thMoments}.
 \par

\vspace{5mm}
\noindent\textbf{Acknowledgements:} This research is conducted within the projects FINEWSTOCH (239019) and STOCHINF (250768) of the Research Council of Norway (NFR). The support of NFR is thankfully acknowledged.

\bibliographystyle{siam}
\bibliography{referances} 
%
%
%

\end{document}